\documentclass[a4paper]{article}

\usepackage[english]{babel}
\usepackage{amsfonts}
\usepackage{enumitem}				
\usepackage{amsmath, amssymb}
\usepackage{mathtools}				
\usepackage{amsthm}					
\usepackage{bbm}					
\usepackage{graphicx}
\usepackage{xcolor}					
\usepackage{siunitx}				
\usepackage{soul}
\usepackage{subcaption}
\usepackage[round]{natbib}			
\usepackage[colorlinks=true, linkcolor=black, citecolor=black, hidelinks]{hyperref}

\title{\Large \bf Nonparametric two sample test of spectral densities}
\author{
	Ilaria Nadin\thanks{Department of Statistics and Operations Research, University of Vienna, Oskar-Morgenstern-Platz 1, 1090 Wien, Austria}
	\and
	Tatyana Krivobokova\thanks{Department of Statistics and Operations Research, University of Vienna, Oskar-Morgenstern-Platz 1, 1090 Wien, Austria}
	\and
	Farida Enikeeva\thanks{Laboratory of Mathematics and Its Applications, UMR CNRS 7348, University of Poitiers, 11 boulevard Marie et Pierre Curie, 86073 Poitiers Cedex 9, France}
}
\date{\today}

\graphicspath{{./art/}}

\newcommand{\norm}[1]{\left\lVert#1\right\rVert} 	
\newcommand{\lnorm}[1]{\lVert#1\rVert} 				
\newcommand{\abs}[1]{\left\lvert#1\right\rvert}		
\newcommand{\labs}[1]{\lvert#1\rvert}				
\newcommand{\floor}[1]{\lfloor#1\rfloor}			
\newcommand{\T}{\mathsf{T}}

\DeclareMathOperator{\tr}{\mbox{tr}}

\let\P\relax
\DeclareMathOperator{\Hyp}{\mathrm{H}}		
\DeclareMathOperator{\P}{\mathrm{P}}		
\DeclareMathOperator{\E}{\mathrm{E}}		
\DeclareMathOperator{\Var}{\mathrm{Var}}	
\DeclareMathOperator{\Cov}{\mathrm{Cov}}	

\theoremstyle{plain}
\newtheorem{theorem}{Theorem}
\newtheorem{lemma}{Lemma}

\newtheorem{proposition}{Proposition}
\newtheorem{corollary}{Corollary}

\theoremstyle{definition}
\newtheorem{definition}{Definition}
\newtheorem{assumption}{Assumption}

\newenvironment{keywords}{\vspace{1em} \noindent\textit{Keywords:}\ }{\par\vspace{1em}}
\newcommand{\step}[1]{\medskip\noindent\textit{Step~#1.} }

\begin{document}
	
	\maketitle
	
	\begin{abstract}
		
		A novel nonparametric test for the equality of the covariance matrices of two Gaussian stationary processes, possibly of different lengths, is proposed. The test translates to testing the equality of two spectral densities and is shown to be minimax rate-optimal. Test performance is validated in a simulation study, and the practical utility is demonstrated in the analysis of real electroencephalography data. The test is implemented in the R-package {\tt sdf.test}.
	\end{abstract}
	
	\begin{keywords}
		Discrete cosine transform, minimax separation rate, minimax optimal test, unbalanced data samples.	
	\end{keywords}
	
	\section{Introduction}
	
	Testing whether two time series have the same stochastic structure has been a key area of interest since the 1980s. This problem arises in many applications such as meteorology,
	financial market analysis, biology, neuroscience, etc. This problem can be treated in the frequency domain by comparing two spectral densities of the observed time series or in the time domain by comparing the corresponding autocovariances. Our work focuses on a non-parametric approach to testing of equality of two spectral densities. 
	
	One of the earliest contributions to this field was by \citet{coates1986tests}, who studied whether two independent stationary time series share the same spectral density, testing against the scale-shift alternative by using the log-ratio of periodograms as the test statistic.  Around the same time, \citet{swanepoel1986} introduced a test that utilized parametric estimators of the spectral densities from two independent autoregressive processes, applying a bootstrap approach to determine its critical value. One of the first nonparametric tests is described in \citet{diggle1991nonparametric}, where a test for comparing two normalized cumulative periodograms is proposed using the Kolmogorov-Smirnov or Cramér-von Mises type distance between them. Since then, subsequent research in this field has increased, incorporating various statistical methods to handle different types of time series data. 	
	\citet{Gomes_Droucihe2002} considered the problem of testing proportionality of two spectral densities of autoregressive processes, utilizing the ratio of parametric estimators of spectral densities as the test statistic. In the time domain context of comparison of two stationary linear time series, \citet{guo1999} proposed a non-parametric test of comparison of two first-order autoregressive processes. More recently, \citet{jin_wang2016} developed a two-sample test based on empirical autocorrelations, while \citet{grant_quinn_2017} used a parametric likelihood-ratio test for comparing two autoregressions.  All these methods for two-sample testing of time series in the frequency domain rely on heuristic proofs and/or are constrained by the parametric autoregressive structure of the underlying process, limiting their applicability.
	
	The nonparametric approach to testing the equality of spectral densities has been considered by many authors. \citet{dette_paparoditis2009} develop a test based on an appropriate distance between the nonparametrically estimated individual spectral densities and an overall ``pooled'' spectral density derived from the entire set of time series under consideration. This test is specifically designed for assessing the equality of multiple spectral densities in multidimensional time series. The authors derive the limiting distribution of the test statistic under the null hypothesis and investigate the test's power and its limiting distribution for fixed alternatives. Another nonparametric method for comparing two spectral densities of time series of equal length was recently proposed by \citet {Zhang_Tu2022}. The authors use the Anderson-Darling-type test statistic calculated for periodogram ratios and obtain its limiting distribution under the null hypothesis. However, both methods fall short in addressing a significant practical issue: they cannot accommodate time series of different lengths. 
	
	One of the first attempts to address time series of different sizes can be found in \citet{caiado2012tests}, where the authors propose a test based on a periodogram-distance introduced in \citet{caiado2009comparison}. However, this approach was shown to have low power \citep[see][]{jentsch2012note}. 
	Subsequent contributions include the work of \citet{lu2013fan}, who developed a test based on the Fourier coefficients of the log-ratio of periodograms, and  \citet{preuss_hildebrandt2013}, who introduced a test statistic estimating the $\mathcal L_2$ distance between two spectral densities.  \citet{decowski2015wavelet}  proposed a test based on the wavelet decomposition of the ratio of periodograms, which was later improved by \citet{li2018tests}. These works share at least one of two important drawbacks: they either rely on time series periodograms, which are known to be unbiased but inconsistent \citep[see e.g. Theorem 2.14 in ][]{FanYao2005}, or limit theoretical developments to a study of the asymptotic distribution of the statistic under the null hypothesis and the consistency of the test. 
	
	We propose a novel method for testing the equality of two spectral densities, which is based on the $\mathcal L_2$ distance between two non-parametric estimators of spectral densities. Our test statistic employs the non-parametric estimator introduced in~\citet{klockmann2024efficient}, which is shown to be minimax optimal on a H\"older class of functions. This non-parametric spectral density estimator has two practical advantages. First, it shows strong small-sample performance, using twice as many data points as the periodogram by relying on the discrete cosine transform rather than the Fourier transform. Second, it allows the spectral density estimator to be evaluated on a chosen number of sample points. As a result, the spectral densities obtained from time series of unequal length can always be evaluated on the same set of points, facilitating comparison of their underlying stochastic structure.
	
	We analyze the performance of our testing procedure within the minimax framework and obtain the conditions on the minimal detectable norm of the difference between two spectral densities. The obtained testing rate is minimax optimal for H\"older classes of spectral density functions; the provided computer simulations corroborate these findings. Finally, we apply our testing procedure to real electroencephalography (EEG) data. 
	
	\section{Problem Statement}
	
	We observe two independent zero-mean random vectors $\mathbf{X}_1$ and $\mathbf{X}_2$ of dimensions $n_1$ and $n_2$ following Gaussian distribution with covariance matrices $\Sigma_1$ and $\Sigma_2$ respectively, such that
	\[
	\mathbf{X}_1 = (X_{1,1},\dots,X_{1,n_1})^\T \sim \mathcal{N}_{n_1}(0_{n_1},\Sigma_1),
	\qquad
	\mathbf{X}_2 = (X_{2,1},\dots,X_{2,n_2})^\T \sim \mathcal{N}_{n_2}(0_{n_2},\Sigma_2).
	\]
	Throughout this paper, the index $k$ takes values in $\{1,2\}$, indicating observations related to the time series $\mathbf{X}_1$ or $\mathbf{X}_2$ accordingly.
	The random vectors $\mathbf{X}_k$ are realizations of centered stationary Gaussian processes $\big\{X_k(t)\big\}_{t\in\mathbb Z}$ with auto-covariance functions $\gamma_k(h) = \Cov \{X_k(t),X_{k}(t+h)\} $, $h\in\mathbb{Z}$, and spectral densities $f_k$ defined as the Fourier transform of the auto-covariance functions
	\[
	f_k(x) = \gamma_k(0) + 2 \sum_{h=1}^\infty \gamma_k(h) \cos(hx), \quad x \in [-\pi,\pi],
	\]
	whenever $\sum_{h\in\mathbb{Z}} \abs{\gamma_k(h)} < \infty$.
	The inverse Fourier transform implies that 
	\[
	\gamma_k(h) = \int_{0}^{1} f_k(\pi x) \cos (h \pi x) dx,
	\]
	resulting in a one-to-one correspondence between $\gamma_k$ and $f_k$.
	Hence, the covariance matrices $\Sigma_{k} = \bigl\{\gamma_k(i-j)\bigr\}_{i,j=1}^{n_k}$ are completely characterised by their spectral density, whenever it exists. 
	
	Our goal is to test whether the covariances of $\mathbf{X}_1$ and $\mathbf{X}_2$ are generated by the same auto-covariance function.
	This problem is equivalent to testing the equality of their respective spectral densities, namely
	\[
	\Hyp_0:\ f_1 \equiv f_2 \quad \mbox{against}\quad \Hyp_1:\ \norm{f_1-f_2}_{2} \ge r_{n_1,n_2}>0,
	\]
	where $\norm{f}_2= \{ (2\pi)^{-1} \int_{-\pi}^{\pi} f(x)^2 dx\}^{1/2}$. 
	
	We suppose that the spectral densities are bounded, strictly positive, and belong to the H\"older class defined by 
	\[
	\mathcal{H}_\eta (M_0,M_1) = \biggl\{f:[-\pi,\pi] \to \mathbb{R}^+:\   \norm{f}_\infty \le M_0,\ \lnorm{f^{\floor{\eta}}(\cdot+h)- f^{\floor{\eta}}(\cdot)}_\infty \le M_1 \abs{h}^{\eta-\floor{\eta}}\biggr\},
	\]
	where $\eta>0$. 
	Let $L_c = \{f : \inf_x f(x) \geq c\}$ for some $c > 0$. The considered spectral densities are assumed to belong to the set $\mathcal{F}_\eta = \mathcal{H}_\eta (M_0, M_1) \cap L_c$. 
	Thus, we can reformulate the hypotheses as follows
	\begin{equation}\label{eq:Problem}
		\Hyp_0: \ (f_1,f_2) \in \mathcal B_{n_1,n_2}^0
		\qquad \mbox{against} \qquad
		\Hyp_1: \ (f_1,f_2) \in \mathcal B_{n_1,n_2}^1(\rho_{n_1,n_2}),
	\end{equation}
	where $\mathcal B_{n_1,n_2}^0 = \{ (f,f) : f \in \mathcal{F}_{\eta} \} $, and 
	\[
	\mathcal B_{n_1,n_2}^1 (\rho_{n_1,n_2}) = \Bigl\{ (f_1,f_2) \in \mathcal{F}_{\eta_1} \times \mathcal{F}_{\eta_2} : \norm{f_1-f_2}_2 \geq \rho_{n_1, n_2}\Bigr\}.
	\]
	
	For a test $\psi_{n_1,n_2}:(\mathbf{X}_1,\mathbf{X}_2)\to\{0,1\}$ define the type I and type II errors by
	\[
	\begin{split}
		\alpha(\psi_{n_1,n_2}) & =  \sup_{(f_1,f_2) \in \mathcal B_{n_1,n_2}^0 } \P_{f_1,f_2} \{\psi_{n_1,n_2} = 1\} , \\
		\beta(\psi_{n_1,n_2},\rho_{n_1,n_2}) & = \sup_{(f_1,f_2) \in \mathcal B_{n_1,n_2}^1 (\rho_{n_1,n_2}) } \P_{f_1,f_2} \{\psi_{n_1,n_2}=0\}.
	\end{split}
	\]
	It is well known that if the probability measures under the null and alternative hypotheses are too close, then the power of any testing procedure will be close to the type I error, making the hypotheses hardly distinguishable. The idea is  to measure the performance of a test using the so-called separation radius $\rho_{n_1,n_2}$ between two hypotheses \citep[for more details see][]{Ingster&Suslina:2003}. We are interested in the conditions on the minimal radius that allow the separation of two hypotheses, such that the difference between the two spectral densities is detectable at given significance levels. 
	
	More precisely, for given $\alpha\in (0,1)$, we say that a test $\psi_{n_1,n_2}^*$ is of level $\alpha$ if $\psi_{n_1,n_2}^* \in \Psi_\alpha$, where
	\[
	\Psi_\alpha = \{ \psi_{n_1,n_2}^*:(\mathbf{X}_1,\mathbf{X}_2)\to\{0,1\} : \alpha(\psi_{n_1,n_2}) \leq \alpha \}.
	\]
	We use the following definition of the minimax separation rate \citep[see][]{Baraud:2002}.
	\begin{definition}\label{def:nonasymp_sep_rate}
		Let $\alpha,\beta\in(0,1)$ be given. We say that $\rho_{n_1,n_2}^*$ is  an $(\alpha,\beta)$-minimax separation boundary if 
		\[
		\rho_{n_1,n_2}^* =\inf_{\psi\in\Psi_\alpha}\rho_{n_1,n_2}(\alpha,\psi),
		\]
		where 
		$\rho_{n_1,n_2}(\alpha,\psi)=\inf \Bigl\{\rho>0:\ \beta(\psi,\rho_{n_1,n_2})\le \beta\Bigr\}$. 	
	\end{definition}
	The goal of minimax testing is to find the $(\alpha,\beta)$-minimax separation rate  $\varphi_n$ and two constants $C_*$ and $C^*$ possibly depending on the significance levels $\alpha$ and $\beta$ such that for any $n \ge 1$, 
	\[
	C_* \varphi_{n_1,n_2} \le \rho^*_{n_1,n_2} \le C^* \varphi_{n_1,n_2} .
	\]
	In the present work, we provide an asymptotic upper bound on the minimax separation boundary with the optimal separation rate for H\"older classes of smoothness $\eta \ge 1/2$ \citep[see][Section 6.2  for minimax optimality results for testing on H\"older classes]{Ingster&Suslina:2003}. More precisely, we will show that 
	\[
	\rho^*_{n_1,n_2} \le C^* \varphi_{n_1,n_2} \{1+o(1)\}, \quad n_1, n_2 \to \infty,
	\]
	with $\varphi_{n_1,n_2} = (n_1^{-1} + n_2^{-1} )^{2 \eta^* /(4 \eta^* + 1)}$, where $\eta^* = \min \{ \eta, \eta_1, \eta_2 \}$ for $\eta, \eta_1, \eta_2, \ge 1/2$ and $C^*>0$ is a constant independent of $n_1$ and $n_2$.
	
	\section{Testing procedure}
	
	The idea of the test is to estimate the log-spectral densities of the two time series and to reject the null hypothesis whenever the norm of their difference is sufficiently large.
	For the estimation of the log-spectral densities, we rely on the method proposed by \citet{klockmann2024efficient}, which is based on a data transformation that reformulates the log-spectral density estimation problem as a mean estimation problem in approximate Gaussian regression.
	One advantage of this method is that it converts the initial time series into a binned dataset of a specified sample size. This allows us to transform the two original time series of different lengths into two binned series of equal length, from which log-spectral densities can be estimated and directly compared.
	
	
	\subsection{Data transformation}
	
	We outline the data transformation from \citet{klockmann2024efficient} and adapt it to our setting.
	\begin{enumerate}
		\item {\bf Discrete cosine transform:} For each vector $\mathbf{X}_k$, define new observations $\mathbf{W}_k$, such that $W_{k,j}=\{(D_k)_{j}^\T \mathbf{X}_k\}^2$ for $j=1,\dots,n_k$, where $D_k$ is the $n_k\times n_k$ matrix of discrete cosine transform I and $(D_k)_{j}$ is its $j$-th column.
		Resulting $W_{k,j}$ are gamma-disributred as
		\[
		W_{k,j}\sim \Gamma \Bigg\{ \frac 12,2f_k(\pi x_{k,j}) +\mathcal{O} \bigg( \frac1{n_k}+\frac{\log n_k}{n_k^{\eta_k}} \bigg) \Bigg\},
		\]
		with $x_{k,j}=(j-1)/(n_k-1)$. Moreover, $W_{k,j}$, $1\le j\le n_k$ are asymptotically independent. 
		
		\item {\bf Binning:} Choosing $\nu_k \in (1-\min\{1,\eta_k\}/3,1)$ such that $n_1^{\nu_1} = n_2^{\nu_2} = T$, the transformed data $\mathbf{W}_k$ are binned as follows
		\[
		Q_{k,t}=\sum_{j=(t-1)n_k/T+1}^{tn_k/T} W_{k,j},\quad t=1,\dots,T,\quad k=1,2,
		\]
		obtaining two vectors $\mathbf{Q}_1 = (Q_{1,1},\dots,Q_{1,T})^\T$ and $\mathbf{Q}_2 = (Q_{2,1},\dots,Q_{2,T})^\T$ of the same dimension. 
		
		\item {\bf Variance-stabilizing transform:} 
		Set 
		\[
		Y_{k,t} = \frac{\log(Q_{k,t}/m_k)}{2^{1/2}},\quad t=1,\dots, T,
		\]
		where $m_k = n_k/T = n_k^{1-\nu_k}$. Since the spectral density is symmetric around $0$ and periodic on $[-\pi, \pi]$, these observations can be mirrored into $\widetilde{T} = 2T-2$ observations $Y_{k,T}, \dots, Y_{k,2}, Y_{k,1}, \dots, Y_{k,T-1}$. Renumerating them, and scaling the design points to $[0,1)$, the new observations are approximately independent Gaussian with 
		\[
		Y_{k,t} \stackrel{\mbox{\tiny approx.}}{\sim } \mathcal{N} \bigg(H_k\bigl\{f_k(x_t)\bigr\},\frac1{m_k}\bigg),\quad x_t = \frac{t-1}{\widetilde{T}-1},
		\]
		for $t=1,\dots, \widetilde{T}$,	where $H_k(z) = 2^{-1/2} \{\phi(m_k/2) + \log(2z/m_k)\}$ and $\phi$ is the digamma function.
		
		\item {\bf Data translation:} The results are shifted as follows
		\[
		Y_{k,t}^* = Y_{k,t} - \frac{1}{2^{1/2}} \bigg\{ \phi \Big(\frac{m_k}{2} \Big) - \log \frac{m_k}{2} \bigg\}, \quad t=1,\dots, \widetilde{T}.
		\]
	\end{enumerate}
	
	The new observations $\mathbf{Y}_1^*=(Y_{1,1}^*,\dots,Y_{1,\widetilde{T}}^*)$ and $\mathbf{Y}_2^*=(Y_{2,1}^*,\dots,Y_{2,\widetilde{T}}^*)$ satisfy the following lemma, whose proof can be found in the Supplementary Material at the end.
	
	\begin{lemma}[Data decomposition]\label{lem:DataDecomposition}
		Consider $\mathbf{X}\sim\mathcal{N}_n(0_n,\Sigma)$, with spectral density $f\in\mathcal{F}_{\eta}$, $\eta > 0$.
		If $h > 0$ is such that $h \to 0$ and $hT \to \infty$, with $T=\floor{n^{\nu}}$ for $\nu \in (1-\min\{1,\eta\}/3,1)$, and $\mathbf{Y}^*$ is the vector of transformed data as in the proposed procedure, then $\mathbf{Y}^*$ can be decomposed in the following way
		\[
		\mathbf{Y}^* = \widetilde{\mathbf{Y}}^* + \overline{\mathbf{Y}},
		\]
		where 
		\[
		\widetilde{Y}_{t}^* = \frac{1}{2^{1/2}} \log f(x_t) + \epsilon_{t} + \zeta_{t} + \xi_{t}, \quad t=1,\dots,\widetilde{T},
		\]
		for $\abs{\epsilon_{t}} \leq c' (n^{-1} + n^{-\eta} \log n)$ small deterministic error with $c'>0$ constant, $\zeta_{t} \stackrel{\mathclap{iid}}{\sim} \mathcal{N}(0,m^{-1})$, $m=n/T$, standard Gaussian random variables, and $\xi_t$ centered independent random variables with decaying moments
		\[
		\E (\abs{\xi_t}^l) \leq c''_l (\log m)^{2l} \{ m^{-l} + (T^{-1} + T^{-1} n^{1-\eta} \log n)^l \}, \quad l > 1,
		\]
		for some constant $c''_l =c''_l(\eta, M_1)$ independent of $n$.
		Moreover,
		\[
		\norm{\overline{\mathbf{Y}}}_{\ell_2} = \mathcal{O}_p (m^{-1/2}),
		\]
		as $n \to + \infty$.
	\end{lemma}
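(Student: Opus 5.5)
Following Klockmann and Krivobokova (2024), the plan is to couple the DCT coefficients to an exactly independent gamma sample, run the binning and log transform on that sample, and expand. Write $\mathbf{X}=\Sigma^{1/2}\mathbf{Z}$ with $\mathbf{Z}\sim\mathcal N_n(0,I_n)$ and $D^\T\Sigma D=\Lambda+R$, where $\Lambda=\mathrm{diag}\{f(\pi x_j)\}$. Since $\Sigma$ is Toeplitz, the DCT-I nearly diagonalises it, and for $f\in\mathcal F_\eta$ one has entrywise $|R_{ij}|\lesssim n^{-1}+n^{-\eta}\log n$ by the standard Toeplitz--circulant comparison. Put $\widetilde W_j=f(\pi x_j)\,(D^\T\mathbf Z)_j^2$, which are independent $f(\pi x_j)\chi^2_1$ variables. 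The original $W_j$ then differ from $\widetilde W_j$ by a cross term controlled by $R$. After binning, this splits into a deterministic part, namely the diagonal of $R$ averaged in each bin, which enters $\epsilon_t$, and a centred random remainder.

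Next I define $\widetilde Q_t=\sum_{j\in\text{bin }t}\widetilde W_j$ and expand $\log(\widetilde Q_t/m)$. Let $\bar f_t=m^{-1}\sum_{j\in\text{bin }t} f(\pi x_j)$. Then $\widetilde Q_t/(m\bar f_t)$ is, up to the variation of $f$ inside the bin, $\Gamma(m/2,2/m)$-distributed. By Hölder continuity, $\log \bar f_t=\log f(x_t)+\mathcal O(T^{-\min(\eta,1)})$, and this goes into $\epsilon_t$ or $\xi_t$. The choice of $\nu$ makes $T^{-\min(\eta,1)}$ negligible relative to $m^{-1/2}$; this is exactly the constraint $\nu>1-\min\{1,\eta\}/3$. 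For a $\Gamma(m/2,2/m)$ variable $G$, $\E\log G=\phi(m/2)-\log(m/2)$, and the data translation removes precisely this mean. The centred variable $2^{-1/2}\{\log G-\E\log G\}$ is then coupled through KMT/Sakhanenko or an Edgeworth-type argument to $\zeta_t\sim\mathcal N(0,m^{-1})$. The coupling residual, together with the Taylor remainders $\log(1+u)-u$, forms $\xi_t$. Its moments are of order $(\log m)^{2l}m^{-l}$, obtained by bounding $\E|G-1|^{2l}\lesssim m^{-l}$ and truncating on $\{|G-1|>1/2\}$, which has exponentially small probability. The $T^{-1}$ and $T^{-1}n^{1-\eta}\log n$ terms in the bound come from the within-bin variation of $f$ and from the bias $R$ relative to $f$ scaled per bin. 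Mirroring only duplicates entries, so independence is kept on each half.

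The remaining piece is $\overline Y_t=2^{-1/2}\{\log Q_t-\log\widetilde Q_t\}$. Write $Q_t-\widetilde Q_t=\mathbf Z^\T B_t\mathbf Z$ with $B_t$ built from the off-diagonal part of $D^\T\Sigma D$ restricted to bin $t$. Because $\widetilde Q_t/m$ is bounded below with overwhelming probability (using $f\ge c$), $|\overline Y_t|\lesssim m^{-1}|\mathbf Z^\T B_t\mathbf Z-\E\,\mathbf Z^\T B_t\mathbf Z|$ up to the deterministic part already moved into $\epsilon_t$. Summing the variances gives $\sum_t \E\,\overline Y_t^2\lesssim m^{-2}\sum_t\|B_t\|_F^2\le m^{-2}\|R\|_F^2$. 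One then needs $\|R\|_F^2=\mathcal O(m)$, or at least that the off-diagonal mass is $\mathcal O(1)$ per bin row-sum; Chebyshev/Markov then gives $\|\overline{\mathbf Y}\|_{\ell_2}=\mathcal O_p(m^{-1/2})$.

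\textbf{Main obstacle.} I expect the hardest step to be the Frobenius-norm control of the DCT off-diagonal remainder $R$ for a Hölder $f$ with possibly small $\eta$. The entrywise bound alone gives $\|R\|_F^2\lesssim n^2(n^{-1}+n^{-\eta}\log n)^2$, which is too crude. I would first use the explicit representation $R_{ij}=\int f(\pi x)K_n(x;i,j)\,dx$ with Dirichlet-type kernels. Decay in $|i-j|$ then yields $\sum_{i\ne j}R_{ij}^2=\mathcal O(\log n)$ for Lipschitz $f$, and $\mathcal O(n^{1-2\eta}\log^2 n)$-type bounds in general. Any excess beyond this would be absorbed into $\xi_t$ through its $T^{-1}n^{1-\eta}\log n$ term.
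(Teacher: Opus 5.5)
You use the same global coupling as the paper ($D^\T\mathbf{X}=F^{1/2}\mathbf{Z}'$ with $F=D^\T\Sigma D$, $\mathbf{Z}'=D^\T\mathbf{Z}$, $\widetilde W_j=f(\pi x_j)Z_j'^2$). For the independent part, the paper simply cites Cai et al.\ and Lemma~4 of Klockmann and Krivobokova for what you sketch.

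The genuine difference is the remainder. Write $S=F^{1/2}-\Lambda^{1/2}$ (the paper's $R^{1/2}$). The paper bounds $\abs{\overline Q_t/\widetilde Q_t}$ by $A_t/\widetilde Q_t+(A_t/\widetilde Q_t)^{1/2}$ with $A_t=\sum_{j\in t}\{(S\mathbf{Z}')_j\}^2$, which is Cauchy--Schwarz on the uncentred cross term. It then closes with an entrywise perturbation claim $S_{ij}=\mathcal{O}(n^{-1})$. That claim amounts to $\norm{S}_F^2=\mathcal{O}(1)$, and it does not follow from $R_{ij}=\mathcal{O}(n^{-1}+n^{-\eta}\log n)$ when $\eta<1$.

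You centre instead. Since $\tr(B_t)=\sum_{j\in t}R_{jj}$, the mean gives a deterministic term of exactly the stated size $\mathcal{O}(n^{-1}+n^{-\eta}\log n)$, which goes into $\epsilon_t$. The centred quadratic form has variance $2\norm{B_t}_F^2$. This recovers the factor $m$ that Cauchy--Schwarz loses, so you need only $\norm{R}_F^2=\mathcal{O}(m)$ rather than $\mathcal{O}(1)$, and no entrywise control of a matrix square root. That is a sharper and more robust route.

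Some points need repair.

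First, under the global coupling, $B_t=F^{1/2}P_tF^{1/2}-\Lambda^{1/2}P_t\Lambda^{1/2}=SP_tF^{1/2}+\Lambda^{1/2}P_tS$, with $P_t$ the coordinate projection onto bin $t$. This is not a bin block of $R$, and a bin-wise coupling would destroy independence across $t$. What does hold is $\sum_t\norm{B_t}_F^2\le 4M_0\norm{S}_F^2$, together with $\norm{S}_F\le(2c^{1/2})^{-1}\norm{R}_F$. The latter holds because $x\mapsto x^{1/2}$ is $(2c^{1/2})^{-1}$-Lipschitz on $[c,\infty)$, this transfers to Frobenius norms of symmetric matrices, and both $\Lambda$ and $F$ are $\succeq cI$ (the spectrum of $F$ is that of the Toeplitz matrix $\Sigma$, inside $[c,M_0]$). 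The same fact shows that $Q_t$ stochastically dominates $c\chi^2_m$. You need this, in addition to the bound on $\widetilde Q_t$, to linearise $\log Q_t-\log\widetilde Q_t$.

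Second, the entrywise bound is not too crude for $\eta\ge1$: it gives $\norm{R}_F^2=\mathcal{O}(\log^2 n)=o(m)$. So a refined Frobenius bound is only needed for $\eta<1$. Where $\norm{R}_F^2=\mathcal{O}(m)$ is unavailable, you cannot move the excess into $\xi_t$. The variable $\overline Y_t$ depends on all of $\mathbf{Z}'$ through $F^{1/2}$, so it is neither independent across $t$ nor centred.

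Third, the within-bin bias $\mathcal{O}(T^{-\min\{1,\eta\}})$ fits neither slot. It exceeds the allowed bound on $\epsilon_t$, and $\xi_t$ must be centred. It can only be carried by stating the mean at an intermediate point $x_t^*$ of the bin, as the paper does when citing Cai et al. The paper's subsequent bound of $H\{f(x_t^*)\}-H\{f(x_t)\}$ by $c'(n^{-1}+n^{-\eta}\log n)$ has the same defect.

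Finally, $\nu>1-\min\{1,\eta\}/3$ is not the condition for $T^{-\min\{1,\eta\}}=o(m^{-1/2})$. That only requires the weaker $\nu>1/(1+2\min\{1,\eta\})$.
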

	
	\subsection{Test statistics}
	
	Setting $g_k(x)= 2^{-1/2} \log \{f_k(x)\}$, we estimate it from the data $\mathbf{Y}_{k}^*$ with a periodic smoothing spline in an approximate Gaussian regression problem. The smoothing spline estimator $\widehat{g}_k$ is the solution of
	\[
	\min_{s \in \mathcal{S}_{\mathrm{per}}(2q-1,\underline{x}_{\widetilde{T}})} \Bigg[ \frac{1}{\widetilde{T}} \sum_{t=1}^{\widetilde{T}} \{ Y_{k,t}^* - s(x_t) \}^2 + h_k^{2q} \int_{0}^{1} \big\{ s^{(q)}(x) \big\}^2 dx \Bigg],
	\]
	where $h_k>0$ is the smoothing parameter, $q \in \mathbb{N}$ is the penalty order, and $\mathcal{S}_{\mathrm{per}}(2q-1,\underline{x}_{\widetilde{T}})$ is the space of periodic splines of degree $2q -1$ based on the equispaced knots $\underline{x}_{\widetilde{T}} = (x_t)_{i=0}^{\widetilde{T}} = (t/\widetilde{T})_{i=0}^{\widetilde{T}}$.
	The explicit form of the estimator can be written as 
	\begin{equation}\label{eq:density_est}
		\widehat g_k(x_t) 
		= \frac{1}{\widetilde{T} h_k}\sum_{s=1}^{\widetilde{T}} \mathcal{K}_{h_k,q} (x_t, x_s ) Y_{k,s}^*, \quad t=1, \dots, \widetilde{T},
	\end{equation}
	where the expression of the kernel $\mathcal{K}_{h_k,q}$ is given in the Supplementary Material \citep[see][for more information]{schwarz2016unified}.

	With this, we define the test statistic
	\[
	S(\mathbf{X}_1,\mathbf{X}_2) = \frac{1}{\widetilde{T}} \sum_{t=1}^{\widetilde{T}} \big\{ \widehat{g}_1(x_t) - \widehat{g}_2(x_t) \big\}^2 ,
	\]
	so that, for a given significance level $\alpha \in (0,1)$, the test becomes
	\begin{equation}\label{eq:DefPsi}
		\psi_{n_1,n_2}(\mathbf{X}_1, \mathbf{X}_2) = \mathbf{1} \{ S(\mathbf{X}_1,\mathbf{X}_2) > H_\alpha \},
	\end{equation}
	where $H_\alpha$ is the $(1-\alpha)$ quantile of $S$ under $\Hyp_0$. 
	
	\subsection{Main result}
	
	Let $\mathbf{X}_1 \sim \mathcal{N}_{n_1}(0_{n_1},\Sigma_1)$, $\mathbf{X}_2 \sim \mathcal{N}_{n_2}(0_{n_2},\Sigma_2)$, $n_1, n_2 \in \mathbb{N}$, be the observations of two zero-mean stationary Gaussian time series with spectral densities $f_1 \in \mathcal{F}_{\eta_1}$, $f_2 \in \mathcal{F}_{\eta_2}$, $\eta_1, \eta_2 \geq 1/2$, respectively.
	
	\begin{assumption}\label{assump:length}
		Assume that the lengths of the observed time series satisfy $1-\min\{1,\eta_2\}/3 \leq \log n_1 / \log n_2 \leq (1-\min\{1,\eta_1\}/3)^{-1}$. 
	\end{assumption}
	
	\begin{assumption}\label{assump:bandwidth}
		Let $T$ be the number of bins and the bandwidth $h_i>0$ of the spectral density estimatior of $f_i$ satisfy $h_i\to 0$, $h_iT\to\infty$ as $n_i\to\infty$, $i=1,2$. 
	\end{assumption}
	
	We need Assumption~\ref{assump:length} to be able to apply the data decomposition Lemma~\ref{lem:DataDecomposition}.	Indeed, if it is satisfied, we can set the number of bins to be $T = \floor{n_1^{\nu_1}} = \floor{n_2^{\nu_2}}$ with $\nu_k \in (1-\min\{1,\eta_k\}/3,1)$ for $k=1,2$. Assumption~\ref{assump:bandwidth} is a general non-parametric bandwidth assumption. 
	
	For the proposed test $\psi_{n_1,n_2}(\mathbf X_1,\mathbf X_2)$ defined in~\eqref{eq:DefPsi}, the following result holds.
	
	\begin{theorem}\label{th:MinimaxTest}
		
		Let $\alpha,\beta \in (0,1)$ be given significance levels. 	
		
		Let Assumptions~\ref{assump:length} and~\ref{assump:bandwidth} be satisfied and $q \in \mathbb{N}$ be the penalty order of the smoothing spline estimator~\eqref{eq:density_est}. 
		
		\begin{enumerate}[label=(\roman*)]
			\item Assume that under the null hypothesis $f_1=f_2=f \in \mathcal{F}_{\eta}$, where $\eta_1 = \eta_2 = \eta$. If 
			\begin{equation}\label{eq:OrderCritValue}
				H_\alpha  = \mathcal{C}_1 \sum_{k=1,2}\bigg\{ h_k^{2\min\{\eta, 2q\}} + \frac{h_k^{\min\{\eta, 2q\}}}{n_k^{1/2}} + \frac{1}{n_k h_k} +\frac{1}{n_k (\alpha h_k)^{1/2}} + \frac{1}{\alpha n_k} \bigg\} \{1 + o(1) \}
			\end{equation}
			for a constant $\mathcal{C}_1 = \mathcal{C}_1(\eta,q,c,M_0,M_1)$, then the test $\psi_{n_1,n_2}$  defined in~\eqref{eq:DefPsi} is of asymptotical level $\alpha$ as $n_1,n_2 \to \infty$.
			\item Set $\eta^* = \min \{\eta_1, \eta_2, \eta, q\}$. If 
			\begin{equation}\label{eq:ConditionNuk}
				\nu_k > - \frac{2 \eta^*}{ \floor{\min\{\eta_1, \eta_2\}}}\frac{\log h_k}{\log n_k},
			\end{equation}
			then  for a constant $\mathcal{C}_2 = \mathcal{C}_2 (\eta_1, \eta_2, \eta, q, \alpha, \beta, c, M_0, M_1)$ the $(\alpha,\beta)$-minimax separation boundary satisfies
			\begin{equation}\label{eq:radiusI}
				\rho_{n_1,n_2}^* \leq \mathcal{C}_2  \left\{h_1^{\eta^*} + h_2^{\eta^*} + \bigg( \frac{1}{\alpha^{1/2}} + \frac{1}{\beta^{1/2}} \bigg)^{1/2} \bigg( \frac{1}{n_1^{1/2} h_1^{1/4}} + \frac{1}{n_2^{1/2} h_2^{1/4}} \bigg)\right\} \{1+o(1)\}.
			\end{equation}
			\item Moreover, if $h_k = \mathcal{O} \big\{n_k^{-2/(4\eta^* +1)} \big\}$, then for the  critical value
			\[
			H_\alpha = \mathcal{C}_3 \Bigg\{ \bigg( \frac{1}{n_1} + \frac{1}{n_2} \bigg)^{\frac{4 \eta^* - 1}{ 4\eta^* +1}} \Bigg\}  \{ 1 + o(1) \},
			\]
			the $(\alpha,\beta)$-minimax separation boundary satisfies 
			\[
			\rho_{n_1,n_2}^* \leq \mathcal{C}_2\bigg( \frac{1}{\alpha^{1/2}} + \frac{1}{\beta^{1/2}} \bigg)^{1/2}\bigg( \frac{1}{n_1} + \frac{1}{n_2} \bigg)^{\frac{2 \eta^*}{ 4\eta^* +1}} \{1+o(1)\}
			\]
			for some constant $\mathcal{C}_3 = \mathcal{C}_3(\eta_1, \eta_2, \eta, q, \alpha,c,M_0,M_1)$. 
		\end{enumerate}
	\end{theorem}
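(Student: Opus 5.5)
The plan is to reduce everything to a Gaussian-regression analysis via Lemma~\ref{lem:DataDecomposition}. Applied to each series separately (Assumption~\ref{assump:length} guarantees a common $T$), it gives $Y^*_{k,t} = g_k(x_t) + \epsilon_{k,t} + \zeta_{k,t} + \xi_{k,t} + \overline Y_{k,t}$. Since the spline estimator \eqref{eq:density_est} is linear, $\widehat g_k = A_k \mathbf Y_k^*$ with $A_k = (\widetilde T h_k)^{-1}\{\mathcal K_{h_k,q}(x_t,x_s)\}_{t,s}$. Writing $\Delta = \widehat g_1 - \widehat g_2$ evaluated at the design points, we obtain
\[
\Delta = (A_1 \mathbf g_1 - A_2 \mathbf g_2) + (A_1\boldsymbol\epsilon_1 - A_2\boldsymbol\epsilon_2) + (A_1\boldsymbol\zeta_1 - A_2\boldsymbol\zeta_2) + (A_1\boldsymbol\xi_1 - A_2\boldsymbol\xi_2) + (A_1\overline{\mathbf Y}_1 - A_2\overline{\mathbf Y}_2).
\]
We expand $S = \widetilde T^{-1}\|\Delta\|^2$ accordingly.

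We take the kernel facts from \citet{schwarz2016unified}: $\|A_k\|_{op}\le C$, the trace bound $\tr(A_k^\T A_k)\asymp h_k^{-1}$, the Frobenius bound $\|A_k^\T A_k\|_F^2 \asymp \widetilde T/h_k$ up to the scaling in $\widetilde T$, and the bias bound $\widetilde T^{-1}\|A_k\mathbf g - \mathbf g\|^2 \le C h_k^{2\min\{\eta,2q\}}$ for $g\in$ H\"older with $\eta$ (since $\log$ preserves H\"older regularity on $L_c$, the class of $g_k$ is controlled by $c,M_0,M_1$). The term-by-term contributions are then as follows.

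\begin{itemize}
\item \textbf{Bias.} This gives the $h_k^{2\min\{\eta,2q\}}$ term.
\item \textbf{Deterministic $\epsilon$.} This term is negligible.
\item \textbf{Gaussian quadratic form.} Since $\zeta_{k}$ has variance $m_k^{-1}=T/n_k$, the form $\widetilde T^{-1}\zeta^\T A^\T A\zeta$ has mean of order $(n_kh_k)^{-1}$ and standard deviation of order $n_k^{-1}h_k^{-1/2}$. Chebyshev then gives the $1/\{n_k(\alpha h_k)^{1/2}\}$ term.
\item \textbf{Cross terms.} The bias--noise cross term gives $h_k^{\min\{\eta,2q\}}n_k^{-1/2}$.
\item \textbf{$\xi$ and $\overline{\mathbf Y}$.} Using the moment bounds with $l=2$ and $\|\overline{\mathbf Y}_k\|=\mathcal O_p(m_k^{-1/2})$, we get $\widetilde T^{-1}\|A_k\overline{\mathbf Y}_k\|^2 = \mathcal O_p(1/n_k)$. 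Markov's inequality then produces the $1/(\alpha n_k)$ term.
\end{itemize}

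Under $\Hyp_0$ with $g_1=g_2$, a union bound over these pieces at level $\alpha/5$ each yields $\P(S>H_\alpha)\le\alpha\{1+o(1)\}$, which proves (i).

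For (ii), under $\Hyp_1$ we write $S \ge \tfrac12 \widetilde T^{-1}\|\mathbf g_1-\mathbf g_2\|^2 - \widetilde T^{-1}\|R\|^2$ (or, more sharply, we keep the cross term $2\langle \mathbf g_1-\mathbf g_2, \text{noise}\rangle$, which is centred and has variance at most $\|\mathbf g_1-\mathbf g_2\|^2/n$). Here $R$ collects the bias and noise terms, bounded as above but at level $\beta$. Three further ingredients are needed.
\begin{itemize}
\item We pass from $\|f_1-f_2\|_2$ to $\|g_1-g_2\|_2$ via $|\log a-\log b|\ge |a-b|/M_0$ on $[c,M_0]$.
\item We pass from the $L_2$ norm to its discretised version via a Riemann-sum error of order $T^{-\min\{1,\eta^*\}}$.
\item Condition \eqref{eq:ConditionNuk} is exactly what makes this discretisation error $\le h_k^{\eta^*}$, because it is equivalent to $T^{-\lfloor\cdot\rfloor}\ll h_k^{2\eta^*}$.
\end{itemize}
Rejection is then guaranteed once $\|f_1-f_2\|_2^2 \gtrsim H_\alpha + \sum_k\{h_k^{2\eta^*} + \beta^{-1/2} n_k^{-1}h_k^{-1/2}\}$. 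Since $H_\alpha$ is dominated by $h_k^{2\eta^*} + \alpha^{-1/2}(n_k h_k^{1/2})^{-1}$ once $h_kT\to\infty$ and the lower-order terms vanish, taking square roots gives \eqref{eq:radiusI}. Part (iii) is then the substitution $h_k\asymp n_k^{-2/(4\eta^*+1)}$, which balances $h^{2\eta^*}$ against $n^{-1}h^{-1/2}$ and gives $n_k^{-4\eta^*/(4\eta^*+1)}$ and the stated $H_\alpha$ order $n_k^{-(4\eta^*-1)/(4\eta^*+1)}$ coming from the $(n_kh_k)^{-1}$ term.

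I expect the main obstacle to be the rigorous control of the non-Gaussian remainders $\xi$ and $\overline{\mathbf Y}$ inside the quadratic form. Lemma~\ref{lem:DataDecomposition} gives only independence and moment bounds for $\xi$, not Gaussianity, so the variance of $\xi^\T A^\T A\xi$ must be bounded through fourth moments. This requires $l=4$ in the lemma and checking that $(\log m)^8\{m^{-4}+T^{-4}(\dots)\}$ is $o$ of the Gaussian variance scale under $\nu_k>1-\min\{1,\eta_k\}/3$. $\overline{\mathbf Y}$ is only controlled in $\ell_2$, so I would handle it crudely through the operator norm of $A_k$ and accept it as the $1/(\alpha n_k)$ term.
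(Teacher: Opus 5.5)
Your part (i) is workable, but part (ii) has a genuine gap that costs you the testing rate. Under $\Hyp_1$ you bound $S$ from below by the signal minus a remainder, or by the signal plus a centred cross term. You then require the signal to exceed $H_\alpha$ plus fluctuations. But $H_\alpha$ contains $\sum_k (n_kh_k)^{-1}$, which is the \emph{mean} $\widetilde T^{-1}\tr(\Omega)$ of the noise quadratic form $\widetilde T^{-1}\|\mathbf v\|^2$, where $\mathbf v=K_1(\boldsymbol\zeta_1+\boldsymbol\xi_1)-K_2(\boldsymbol\zeta_2+\boldsymbol\xi_2)$. Your requirement $\|f_1-f_2\|_2^2\gtrsim H_\alpha+\dots$ therefore forces $\|f_1-f_2\|_2\gtrsim\sum_k(n_kh_k)^{-1/2}$. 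After optimising $h_k$ this is only the estimation rate $n^{-\eta^*/(2\eta^*+1)}$, not $n^{-2\eta^*/(4\eta^*+1)}$. Your step ``$H_\alpha$ is dominated by $h_k^{2\eta^*}+\alpha^{-1/2}(n_kh_k^{1/2})^{-1}$'' is false: $(n_kh_k)^{-1}$ exceeds $\alpha^{-1/2}(n_kh_k^{1/2})^{-1}$ by the factor $(\alpha/h_k)^{1/2}\to\infty$. Your own part (iii) shows the inconsistency. There $H_\alpha\asymp(n_1^{-1}+n_2^{-1})^{(4\eta^*-1)/(4\eta^*+1)}$, which is of strictly larger order than the squared radius $(n_1^{-1}+n_2^{-1})^{4\eta^*/(4\eta^*+1)}$ you want to conclude. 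At that radius the signal alone can never reach the threshold.

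The missing idea is the cancellation of the noise mean. You must keep $\widetilde T^{-1}\|\mathbf v\|^2$ inside $S$ under the alternative rather than discarding or subtracting it. The point is that $\Omega=\E(\mathbf v\mathbf v^\T)=K_1^2/m_1+K_2^2/m_2$ does not depend on $(f_1,f_2)$. Hence $\E_1(V)-\E_0(V)=\widetilde T^{-1}(\|\boldsymbol\mu_1\|^2-\|\boldsymbol\mu_0\|^2)$, with $\boldsymbol\mu_j=K_1(\mathbf g_1+\boldsymbol\epsilon_1)-K_2(\mathbf g_2+\boldsymbol\epsilon_2)$. For this to help, the critical value cannot be an arbitrary quantity of the order \eqref{eq:OrderCritValue}: an inflated constant in front of $(n_kh_k)^{-1}$ would again have to be paid for by the signal. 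It must be the centred one in \eqref{eq:DefCritValue}, i.e.\ $\E_0(V)$ plus $\{2\Var_0(V)/\alpha\}^{1/2}$ plus the $r_\alpha$ correction. Chebyshev under $\Hyp_1$ then reduces the type II requirement to \eqref{eq:RequestI}. There the $(n_kh_k)^{-1}$ terms cancel, and what remains is the standard deviation $\widetilde T^{-1}\Var^{1/2}(\mathbf v^\T\mathbf v)\asymp\sum_k(n_kh_k^{1/2})^{-1}$, multiplied by $\alpha^{-1/2}+\beta^{-1/2}$. The leftover product $(r_\alpha+r_\beta)\{\widetilde T^{-1}\tr(\Omega)\}^{1/2}$ is also of this order. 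This is what produces the $n_k^{-1/2}h_k^{-1/4}$ term in \eqref{eq:radiusI} and the rate in (iii). A smaller point: $\|\overline{\mathbf Y}_k\|_{\ell_2}=\mathcal O_p(m_k^{-1/2})$ gives no moment bound, so Markov's inequality is not available there. Use the definition of $\mathcal O_p$ directly with an $\alpha$-dependent constant, as the paper does with $r_\alpha$.
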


	The proof of this theorem can be found in Appendix \ref{apx:ProofTheorem}.
	
	Note that if both time series are of the same length, that is, $n_1=n_2=n$, then some developments in Theorem \ref{th:MinimaxTest} simplify significantly, leading to the following corollary. 
	\begin{corollary}\label{cor:AdaptiveTest}
		If $n_1 = n_2=n$, then the results above hold for $\eta^* = \min \{\eta_1, \eta_2, \eta, 2q\}$. In particular, if $h_1 = h_2 = h = \mathcal{O} \big\{n^{-2/(4\eta^* +1)} \big\}$, then for the significance level 
		\[
		H_\alpha = \mathcal{C}_3 \, n^{ - \frac{4 \eta^* - 1}{ 4\eta^* +1}} \,  \{ 1 + o(1) \},
		\]
		the asymptotic upper bound on the $(\alpha,\beta)$-minimax separation boundary is given by 
		\[
		\rho_{n,n}^* \leq \mathcal{C}_2 \bigg( \frac{1}{\alpha^{1/2}} + \frac{1}{\beta^{1/2}} \bigg)^{1/2} \, n^{ - \frac{2 \eta^*}{ 4\eta^* +1}}(1+o(1))
		\]
		with the minimax separation rate $\varphi_{n,n}=n^{ - \frac{2 \eta^*}{ 4\eta^* +1}}$.
		
		More precisely, the test $\psi_{n,n}$ is of asymptotic level $\alpha$ and has the type II error asymptotically smaller than $\beta$ if under the alternative $\|f_1-f_2\|_2\gtrsim  \mathcal{C}_2 \big( \alpha^{-1/2} + \beta^{-1/2} \big)^{1/2} n^{ - \frac{2 \eta^*}{ 4\eta^* +1}}$.
	\end{corollary}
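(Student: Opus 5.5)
The corollary is a specialization of Theorem~\ref{th:MinimaxTest} to $n_1=n_2=n$. The work is in explaining why the smoothness cap improves from $q$ to $2q$ and then substituting the bandwidth. When $n_1=n_2$ we may take $\nu_1=\nu_2=\nu$ and $T=\floor{n^\nu}$, and Assumption~\ref{assump:length} holds trivially. Choosing $h_1=h_2=h$, both estimators in \eqref{eq:density_est} use the same kernel $\mathcal{K}_{h,q}$ on the same design. The statistic therefore becomes $S=\widetilde T^{-1}\sum_t\{\widehat g_{1}(x_t)-\widehat g_2(x_t)\}^2$, where $\widehat g_1-\widehat g_2$ is the spline smoother applied to $\mathbf Y_1^*-\mathbf Y_2^*$. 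Under $\Hyp_0$ the shift constants of step 4 coincide, since $m_1=m_2$. By Lemma~\ref{lem:DataDecomposition}, the difference of the data then has mean $\epsilon_{1,t}-\epsilon_{2,t}$ plus centered noise.

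First I would revisit the proof of Theorem~\ref{th:MinimaxTest}. There the restriction $\eta^*\le q$ (rather than $2q$) comes from the bias cross-terms between $\widehat g_1$ and $\widehat g_2$ computed with different kernels. Such terms of the type $(\mathcal K_{h_1}-\mathcal K_{h_2})g$ only admit an approximation order of $q$ when controlled in a mixed way. With equal kernels the bias of $\widehat g_1-\widehat g_2$ is exactly $\mathcal K_{h,q}(g_1-g_2)-(g_1-g_2)$. Its squared norm is bounded by the standard smoothing spline bias $h^{2\min\{\eta_1\wedge\eta_2,2q\}}$, as in part (i), which already allows $2q$. Under $\Hyp_0$ this bias vanishes identically except for the $\epsilon$ terms. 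Under $\Hyp_1$ I would use the lower bound $\|\widehat g_1-\widehat g_2\|^2\gtrsim \|g_1-g_2\|^2-\text{bias}^2-\text{stochastic}$. The link between $\|g_1-g_2\|_2$ and $\|f_1-f_2\|_2$ follows from $f_k\in[c,M_0]$ via the mean value theorem for $\log$. This gives $\eta^*=\min\{\eta_1,\eta_2,\eta,2q\}$ in (i)--(iii).

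Next comes the stochastic part. The quadratic form $\widetilde T^{-1}\|\mathcal K(\zeta_1-\zeta_2)\|^2$ has mean of order $(nh)^{-1}$ and standard deviation of order $n^{-1}h^{-1/2}$, by trace computations for the smoother matrix. The $\xi$ and $\overline{\mathbf Y}$ terms are negligible by Lemma~\ref{lem:DataDecomposition} and condition \eqref{eq:ConditionNuk}. Chebyshev then gives a critical value of the stated form and a type II error at most $\beta$ once $\|f_1-f_2\|_2^2\gtrsim h^{2\eta^*}+(\alpha^{-1/2}+\beta^{-1/2})n^{-1}h^{-1/2}$.

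Finally, substituting $h\asymp n^{-2/(4\eta^*+1)}$ balances $h^{2\eta^*}$ against $n^{-1}h^{-1/2}$, both of order $n^{-4\eta^*/(4\eta^*+1)}$, which yields $\rho^*_{n,n}$ of order $n^{-2\eta^*/(4\eta^*+1)}$. In \eqref{eq:OrderCritValue} the dominant term is $1/(nh)\asymp n^{-(4\eta^*-1)/(4\eta^*+1)}$; the remaining terms are of smaller order because $\eta^*\ge1/2$. This gives $H_\alpha$ as stated. The main obstacle is the first step: verifying precisely where $q$ enters in the general proof and that the equal-kernel structure removes it. I would attempt this by writing the bias in the Fourier/eigenbasis of the periodic smoother, where the eigenvalues $1/(1+(2\pi jh)^{2q})$ give a bias factor $(hj)^{2q}$, so that squared bias is bounded by $h^{2\min\{\eta,2q\}}$.
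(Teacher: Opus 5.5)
Your proposal is correct and follows the paper's proof. With $h_1=h_2=h$ one has $K_1=K_2=K$, $\Omega=2K^2/m$ and $\lVert\Omega\rVert_2=2/m$, so the matrix $A$ of Step 4 collapses to $B$, and only the bound \eqref{eq:OrderB} with cap $2q$ survives; this is exactly your observation that $\widehat g_1-\widehat g_2$ is a single spline smoother applied to $\mathbf{Y}_1^*-\mathbf{Y}_2^*$, with bias $(K-\mathbf{I}_{\widetilde T})(\mathbf g_1-\mathbf g_2)$. Two small points: like the paper, you must impose $h_1=h_2$ explicitly, since it does not follow from $n_1=n_2$; and in the general theorem the $q$-cap comes from the terms of $A-B$ that are linear in $\Lambda_1-\Lambda_2$, not from $\lVert (K_1-K_2)\mathbf g\rVert^2$, which the paper already bounds by $h^{2\min\{\eta,2q\}}$.
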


	%
	In particular, this implies that for $n_1=n_2$ our test is adaptive to the unknown smoothness parameters up to order $2q$. Precisely, if $\min \{\eta_1, \eta_2, \eta \} \leq 2q$ then the resulting separation radius attains the minimax rate associated with the underlying smoothness levels $\eta_1, \eta_2, \eta$, thereby adapting automatically to their unknown values. If instead $\min \{\eta_1, \eta_2, \eta \} \ge 2q$, then the separation radius is determined by $2q$, and is consequently governed by the tuning parameter $q$ rather than by the true smoothness. As a result, the procedure does not suffer from a loss of optimality due to undersmoothing: if $q$ is below the unknown smoothness level, as long as $2q$ exceeds the minimum smoothness, the minimax separation rate is still achieved.
	The proof of this Corollary \ref{cor:AdaptiveTest} is given in Appendix \ref{apx:ProofCorollary}.
	
	\section{Simulation study}
	\label{sec:SimulationStudy}
	
	In this section, we compare the power of our proposed test with the wavelet-based test proposed by \citet{decowski2015wavelet}.
	
	Further additional comparisons were conducted with the tests proposed by \citet{dette_paparoditis2009}, \citet{preuss_hildebrandt2013}, and \citet{caiado2012tests}. The tests of \citet{caiado2012tests} and \citet{dette_paparoditis2009} are based on suitably defined distance measures between two periodograms, with the latter being applicable only to the comparison of time series of equal lengths. Similarly, \citet{preuss_hildebrandt2013} proposed a test based on an $\mathcal{L}_2$-distance between exponentiated periodograms and derived the limiting distribution of the corresponding test statistic under the null hypothesis, thereby extending the later development of \citet{dette2011testing}, still restricted to time series of equal lengths, to the setting of unequal lengths.
	However, in our numerical simulations, these three tests consistently performed worse than our proposed test and the test by \citet{decowski2015wavelet} across the considered scenarios. As they did not yield additional insights, the corresponding results are deferred to the Supplementary Material, while we focus here only on the comparison with the wavelet-based test.
	
	We consider three main settings, for each of which we provide the power of the tests with significance level $\alpha = 0.05$ under two scenarios of unequal sample sizes: (A) one big and one small ($n_1=1200$ and $n_2=350$), and (B) both big ($n_1=1200$ and $n_2=1000$). In every setting, for $\delta=0,0.1,0.2, \dots,1$, we define the spectral density $f_2=f_{2,\delta}$ of the second process to be a convex combination of $f_1$ and a defined function $\widetilde{f}$, namely $f_{2,\delta}=(1-\delta)f_1+\delta \widetilde{f}$. The standard deviation is always assumed to be $\sigma = 1$, and therefore omitted.
	The settings are as follows:
	\begin{enumerate}
		\item In the first setting, we assume that $f_1$ is the spectral density of an autoregressive process AR(1) with parameter $\phi=0.5$, that is $f_1(x) = (2 \pi)^{-1} \{1 - 2 \phi \cos(x) + \phi^2\}^{-1}$ for $0 \leq x \leq \pi$. The alternative function $\widetilde{f}$ is the spectral density of an AR(1) process with parameter $\phi = 0.8$. Since $f_1$ is an analytic function, we use $q=6$. 
		\item In the second setting, we assume that $f_1(x) = (2 \pi)^{-1} (\abs{\cos( x/2 )}^{1.3} + 0.45)$ for $0 \leq x \leq \pi$, and the alternative function is the spectral density of an AR(2) process with parameters $\phi_1 = 0.3$, $\phi_2 = -0.5$, that is $\widetilde{f}(x) = (2 \pi)^{-1} \{1 + \phi_1^2 + \phi_2^2 + 2 \phi_1 (\phi_2 - 1) \cos( x) -2 \phi_2 \cos(2 x) \}^{-1}$ for $0 \leq x \leq \pi$. Since $f_1$ is Lipschitz, we use $q = 1$. 
		\item In the third setting we assume that $f_1(x)= (2 \pi)^{-1} \{\abs{\cos( x/2 )}^{5.1} + 0.45\}$, for $0 \leq x \leq \pi$, and the alternative function is $\widetilde{f}(x) = (2 \pi)^{-1} \{\abs{\cos( x/2 -0.2 \pi )}^{5.1} + 0.45\}$, for $0 \leq x \leq \pi$. Since $f_1\in\mathcal{F}_5$, we set $q=5$.
	\end{enumerate}
	For our test, the smoothing parameter for the log-spectral density is selected using generalised cross-validation (results with the smoothing parameter estimated by maximum likelihood are very similar and reported in the Supplementary materials).
	For both our test and the wavelet-based test by \citet{decowski2015wavelet}, we set the number of bins to $T=143$ for scenario (A), and  $T=143$ for scenario (B). For general discussion on the appropriate choice of all parameters, see \citet{klockmann2024efficient}.
	Additionally, for the wavelet-based test, we set $J=1$ because it yielded the best power, as suggested by \citet{decowski2015wavelet}.
	
	Since the distribution of our statistics under the null hypothesis depends on the unknown underlying spectral density, for each test, we compute the $(1-\alpha)$ quantile empirically via a Monte Carlo simulation as follows. Given two time series ${\mathbf{X}}_1$ and ${\mathbf{X}}_2$, we assume that both follow the same spectral density as the longest one, say ${\mathbf{X}}_1$. We estimate its spectral density using the data transformation procedure in \citet{klockmann2024efficient}, and use it to generate $M=5000$ Monte Carlo samples of pairs of time series of lengths $n_1$ and $n_2$ following a Gaussian distribution with such spectral density. The statistic is computed and collected for each pair, obtaining the empirical null distribution of the test. 
	
	For each $\delta$, we ran each test $500$ times. All the simulations are performed using \texttt{R} \citep[version 4.2.2, seed 42]{R} and the \texttt{R} package \texttt{sdf.test}, which implements our procedure and is available upon request.
	
	The obtained powers are shown in Figure \ref{fig:powers}. As expected, power curves in scenario (B) with longer time series are much steeper than those for scenario (A) for all settings under consideration. 
	Overall, both tests perform very similarly, but our test consistently (though slightly) outperforms the one by \citet{decowski2015wavelet}, albeit to a varying degree, across all cases. The type I errors of both tests are also very similar, with exact values given in the Supplementary materials. 
	\begin{figure}[!htb]
		\centering
		\begin{subfigure}{0.3\textwidth}
			\centering
			\includegraphics[width=\linewidth]{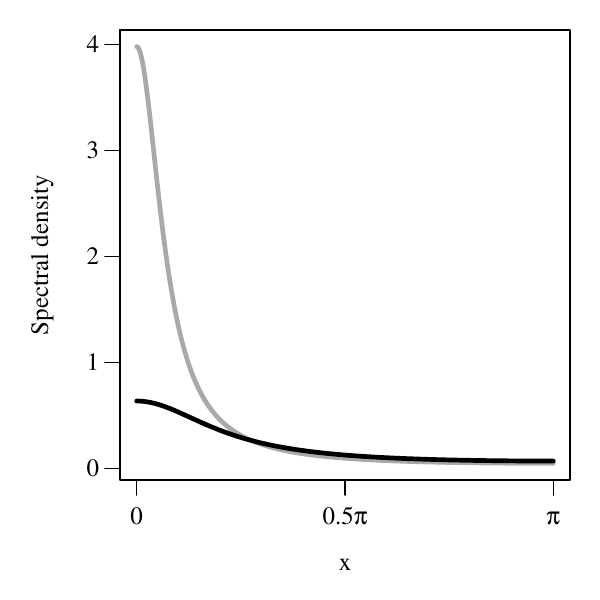}
			\caption{Setting 1, spectral densities}
			\label{fig:case1_sdf}
		\end{subfigure}
		\hfill
		\begin{subfigure}{0.3\textwidth}
			\centering
			\includegraphics[width=\linewidth]{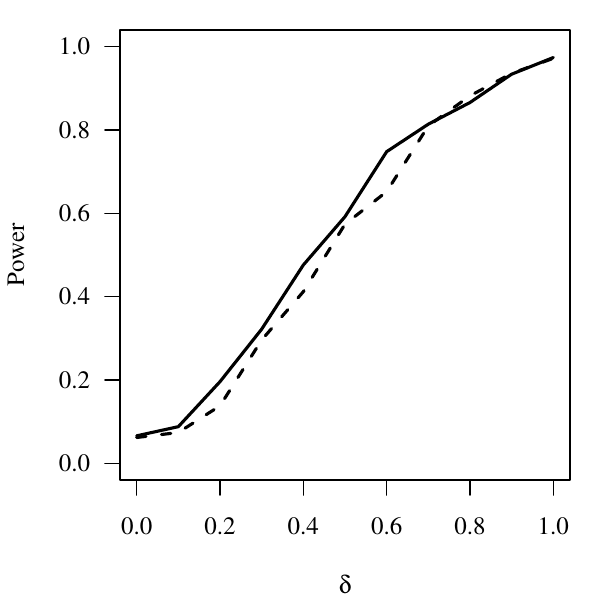}
			\caption{Setting 1, scenario (A)}
			\label{fig:case1_set1}
		\end{subfigure}
		\hfill
		\begin{subfigure}{0.3\textwidth}
			\centering
			\includegraphics[width=\linewidth]{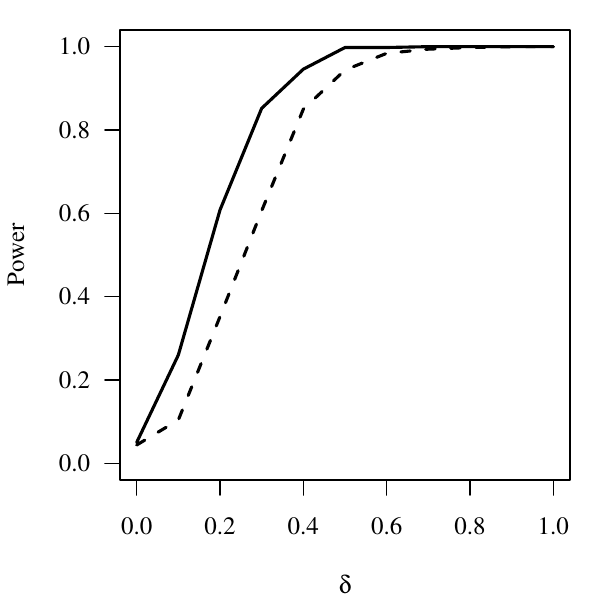}
			\caption{Setting 1, scenario (B)}
			\label{fig:case1_set2}
		\end{subfigure}
		
		\vspace{1em}
		
		\begin{subfigure}{0.3\textwidth}
			\centering
			\includegraphics[width=\linewidth]{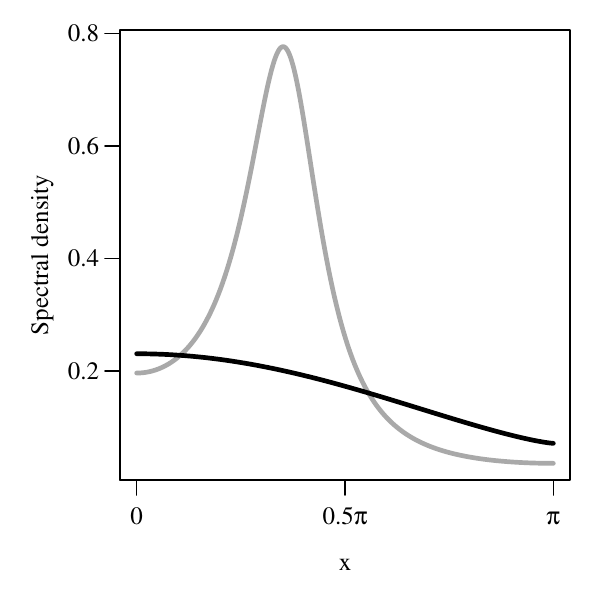}
			\caption{Setting 2, spectral densities}
			\label{fig:case2_sdf}
		\end{subfigure}
		\hfill
		\begin{subfigure}{0.3\textwidth}
			\centering
			\includegraphics[width=\linewidth]{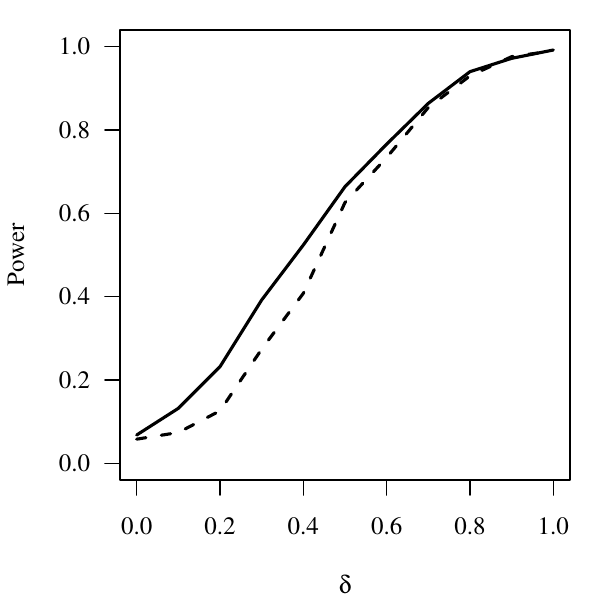}
			\caption{Setting 2, scenario (A)}
			\label{fig:case2_set1}
		\end{subfigure}
		\hfill
		\begin{subfigure}{0.3\textwidth}
			\centering
			\includegraphics[width=\linewidth]{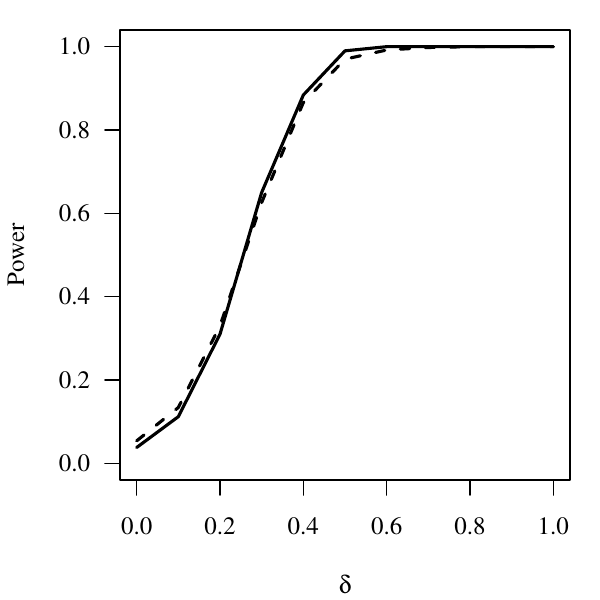}
			\caption{Setting 2, scenario (B)}
			\label{fig:case2_set2}
		\end{subfigure}
		
		\vspace{1em}
		
		\begin{subfigure}{0.3\textwidth}
			\centering
			\includegraphics[width=\linewidth]{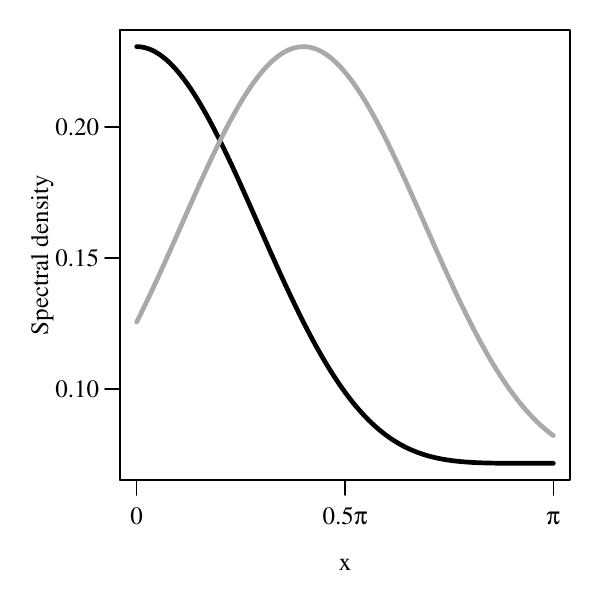}
			\caption{Setting 3, spectral densities}
			\label{fig:case3_sdf}
		\end{subfigure}
		\hfill
		\begin{subfigure}{0.3\textwidth}
			\centering
			\includegraphics[width=\linewidth]{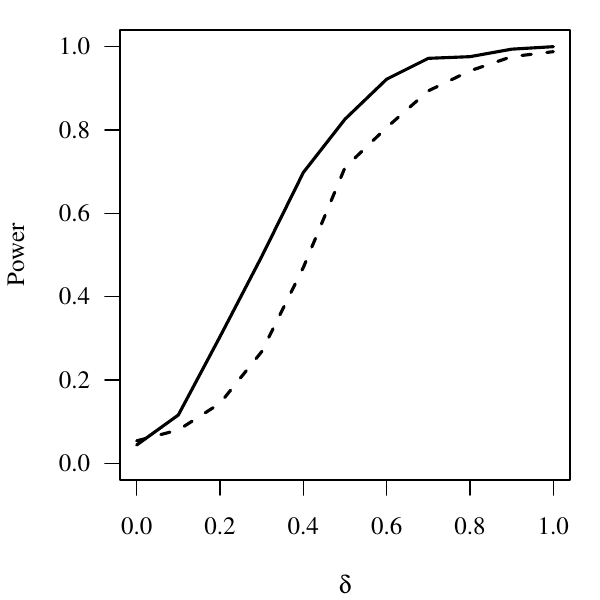}
			\caption{Setting 3, scenario (A)}
			\label{fig:case3_set1}
		\end{subfigure}
		\hfill
		\begin{subfigure}{0.3\textwidth}
			\centering
			\includegraphics[width=\linewidth]{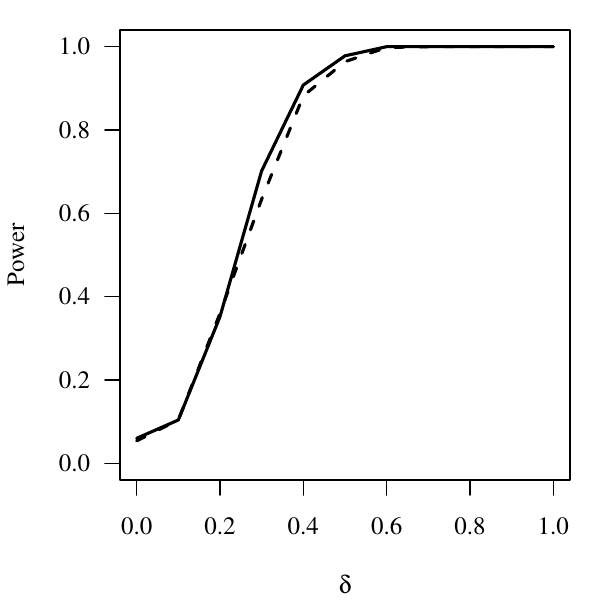}
			\caption{Setting 3, scenario (B)}
			\label{fig:case3_set2}
		\end{subfigure}
		
		\caption{In the first column, the spectral densities $f_1$ (black) and $\widetilde{f}$ (grey). In the second column, the powers of $\psi_{n_1,n_2}$ (solid), and the wavelet-based test (dashed) in scenario (A). In the third column, the powers of $\psi_{n_1,n_2}$ (solid), and the wavelet-based test (dashed) in scenario (B).}
		\label{fig:powers}
	\end{figure}
	
	
	
	
	\section{Application to EEG data}
	
	An electroencephalogram (EEG) is the recording of the brain's electrical activity, performed by several electrodes uniformly arrayed on the scalp. Each electrode captures the electric field generated by millions of neurons firing simultaneously. To measure the activity of a small area of the brain, one can take the difference between potentials measured at two electrodes, obtaining an EEG channel.
	
	We consider a dataset of preprocessed EEG signals collected by \citet{pastor2025deeplearningclassificationeeg}, available on Kaggle \citep[see][]{alexis_pomares_pastor_2021}. This database was generated by collecting the EEG signals of 11 healthy patients who underwent a monitored experiment involving alternating periods of rest and non-invasive brain stimulation. Neurostimulation was induced using two transcranial electrical stimulation (tES) techniques: direct current stimulation (tDCS) and alternating current stimulation (tACS). According to the method, an electrical current of 2 mA was directed bilaterally to one of two brain areas: the angular gyrus (in the posterior parietal lobes) or the middle frontal gyrus (in the frontal lobes). These two regions are often considered of interest for detecting conscious neural activity. In the original study, the purpose of the experiment was to develop a Deep Learning classification tool to determine whether the brain is responsive to the applied tES.
	The EEG signals were recorded using $188$ electrodes and subsequently preprocessed to remove common EEG artefacts, including non-neural physiological activity, direct-current (DC) baseline drift, and other electrical noise \citep[the full preprocessing pipeline is available on GitHub; see][]{pastor2025deeplearningclassificationeeg}.
	This preprocessing step is essential to ensure that the recorded signals are primarily due to neural activity, as EEG amplitudes are typically small and highly susceptible to contamination.
	As a result of this procedure, EEG segments corresponding to different stimulation periods may differ in length, even though the stimulation duration was the same. Therefore, comparing such signal segments is usually nontrivial, and our test should be a valid tool to overcome this problem.
	Our goal is to detect differences in the spectral density of EEG channels that may indicate changes in brain activity during frontal or posterior tDCS.
	To do so, we restrict our analysis to the recordings of one patient (P001). 
	
	We select a channel in the posterior area of the electrode cap, obtained by the potential difference between electrodes E159 and E160. We consider the time series of EEG signals from this channel in the following states:
	\begin{enumerate}
		\item Initial rest ($54$ \si{\second}), recorded in \texttt{run0\_20210726\_033258.csv}.
		\item First frontal tDCS ($30$ \si{\second}), recorded in \texttt{run3\_20210726\_035641.csv}.
		\item Posterior tDCS ($28$ \si{\second}), recorded in \texttt{run2\_20210726\_034709.csv}.
		\item Second frontal tDCS ($28$ \si{\second}), recorded in \texttt{run2\_20210726\_034709.csv}.
	\end{enumerate}	
	Each EEG signal had at most 2 observations that were more than 10 standard deviations from the data and proved very influential. Therefore, we removed those observations from the analysis. 
	
	We first checked the stationarity of each time series using the augmented Dickey-Fuller test. All series were found to be stationary linear, with p-values below $0.01$. The Hurst exponents of the time series fall approximately in the interval $(0.66, 0.72)$, suggesting moderate long-range dependence. The obtained signals have different lengths, a sampling rate of $250$ \si{\hertz}, and are shown in the first column of Figure \ref{fig:eeg-signals}.
	
	\begin{figure}
		\centering
		\begin{subfigure}{0.45\textwidth}
			\centering
			\includegraphics[width=\linewidth]{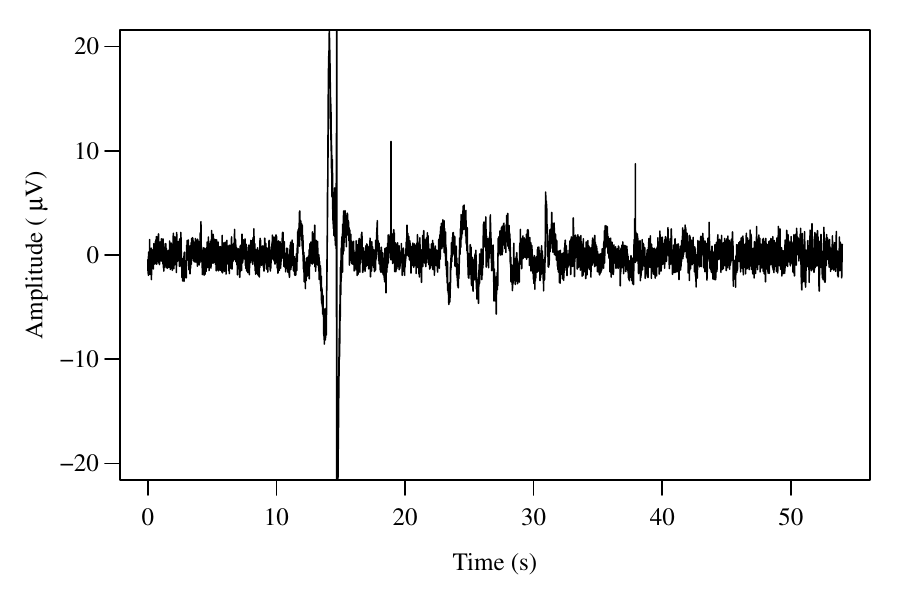}
			\caption{Initial rest}
			\label{fig:eeg1}
		\end{subfigure}
		\hfill
		\begin{subfigure}{0.45\textwidth}
			\centering
			\includegraphics[width=\linewidth]{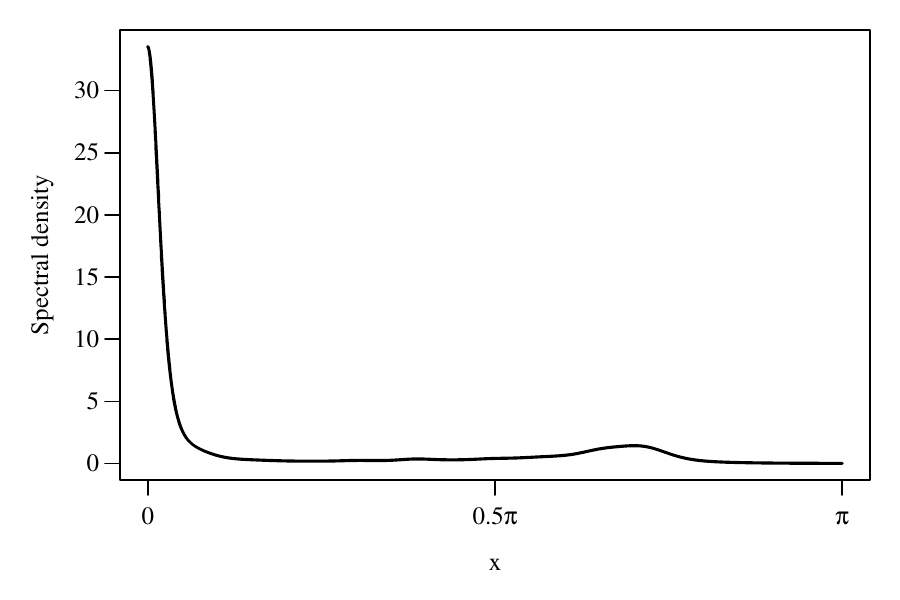}
			\caption{Initial rest spectral density}
			\label{fig:sdf1}
		\end{subfigure}
		
		\begin{subfigure}{0.45\textwidth}
			\centering
			\includegraphics[width=\linewidth]{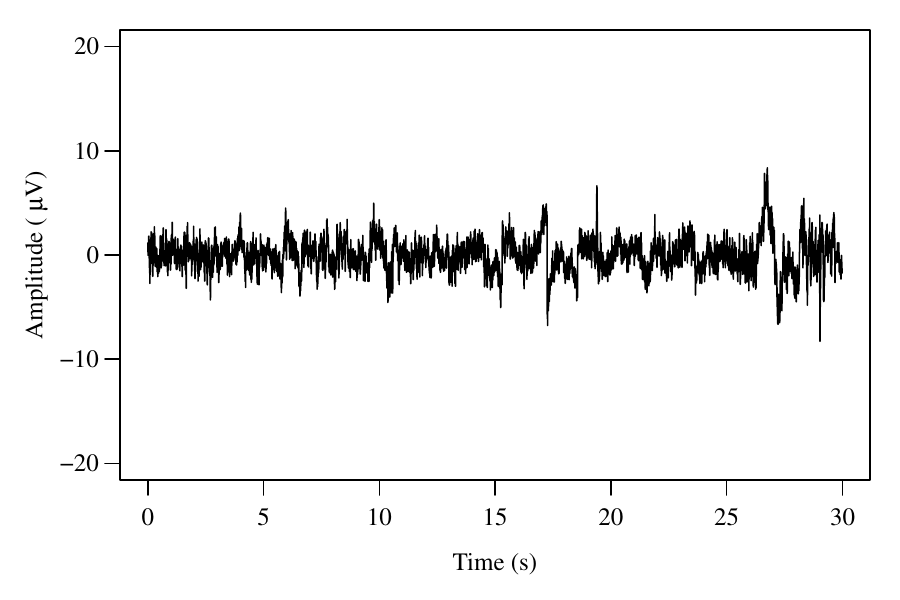}
			\caption{Frontal tDCS}
			\label{fig:eeg2}
		\end{subfigure}
		\hfill
		\begin{subfigure}{0.45\textwidth}
			\centering
			\includegraphics[width=\linewidth]{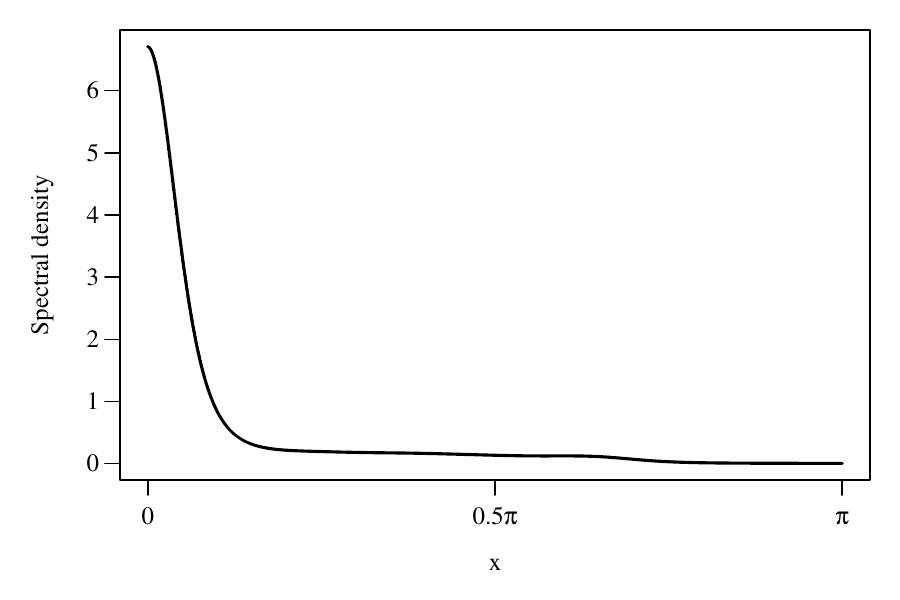}
			\caption{Frontal tDCS spectral density}
			\label{fig:sdf2}
		\end{subfigure}
		
		\begin{subfigure}{0.45\textwidth}
			\centering
			\includegraphics[width=\linewidth]{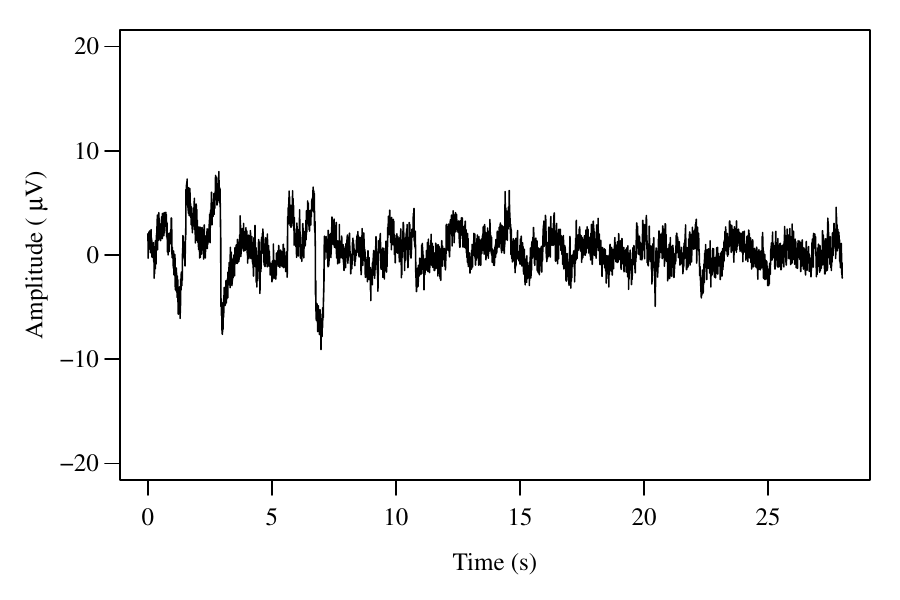}
			\caption{Posterior tDCS}
			\label{fig:eeg3}
		\end{subfigure}
		\hfill
		\begin{subfigure}{0.45\textwidth}
			\centering
			\includegraphics[width=\linewidth]{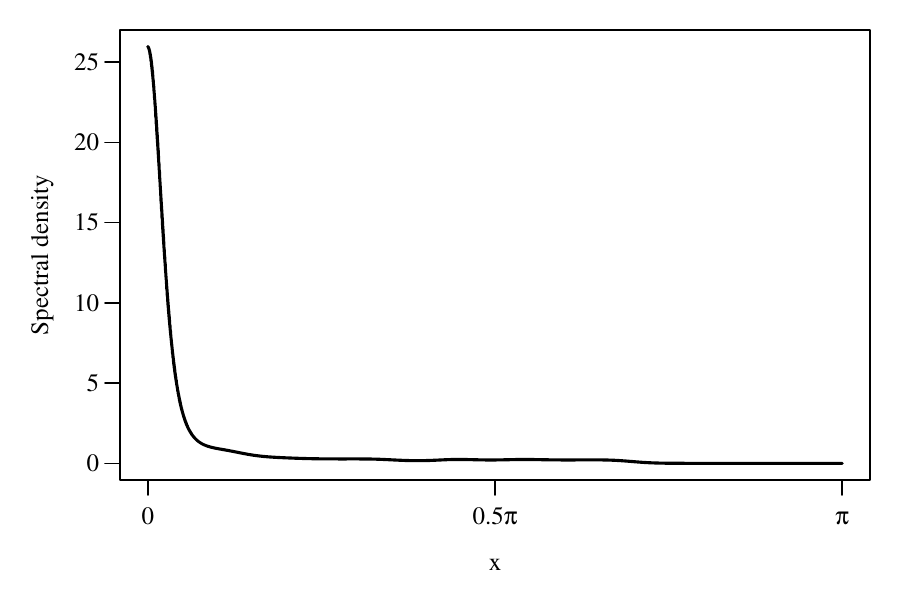}
			\caption{Posterior tDCS spectral density}
			\label{fig:sdf3}
		\end{subfigure}
		
		\begin{subfigure}{0.45\textwidth}
			\centering
			\includegraphics[width=\linewidth]{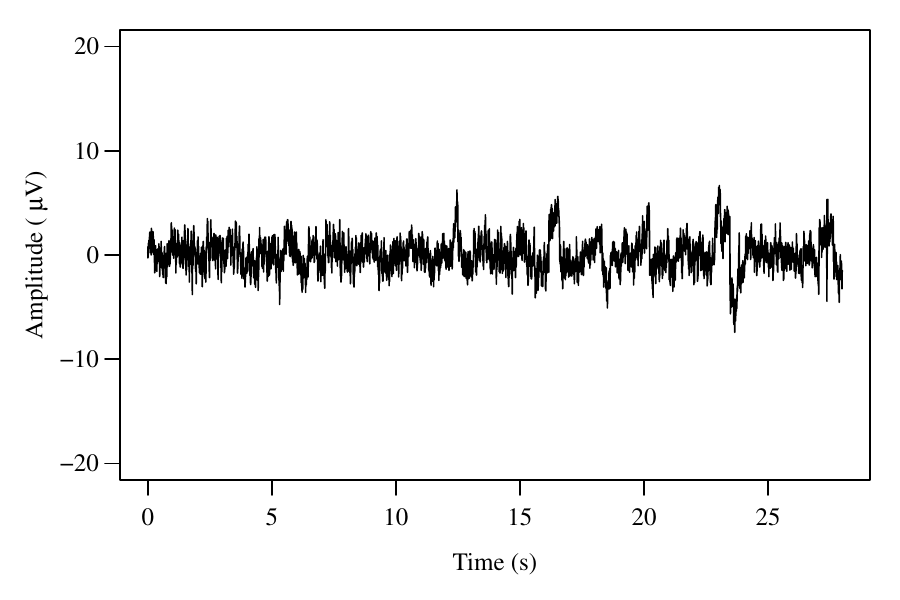}
			\caption{Second frontal tDCS}
			\label{fig:eeg4}
		\end{subfigure}
		\hfill
		\begin{subfigure}{0.45\textwidth}
			\centering
			\includegraphics[width=\linewidth]{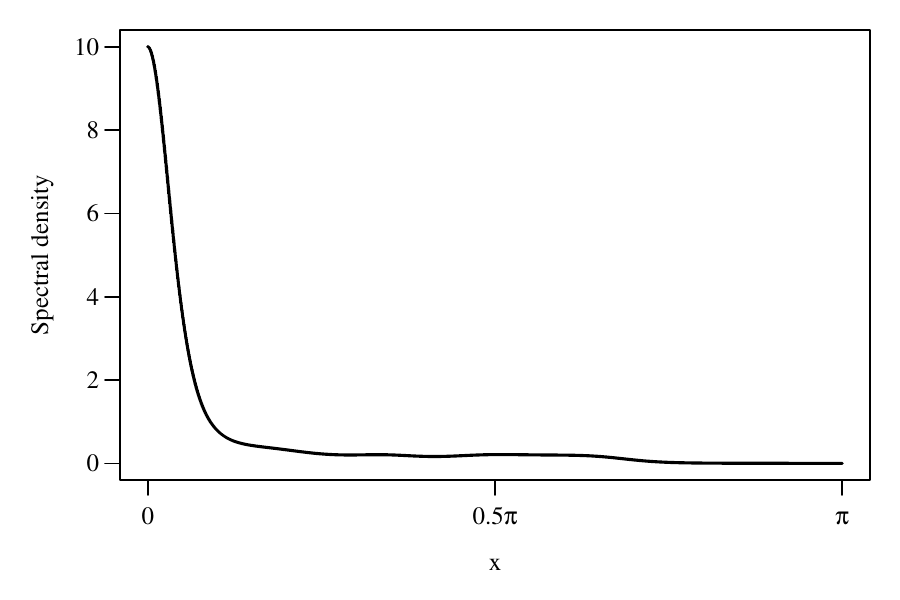}
			\caption{Second frontal tDCS spectral density}
			\label{fig:sdf4}
		\end{subfigure}
		
		\caption{In the first column, difference of potential (in \si{\micro\volt}) registered by the channel E159-E160 along time windows (in \si{\second}) of rest, first frontal tDCS, posterior tDCS, and second frontal tDCS. In the second column, the corresponding estimated spectral density in the interval $[0,1]$.}
		\label{fig:eeg-signals}
	\end{figure}
	
	For each test, performed with our package  \texttt{sdf.test}, we set the number of bins to $T=2100$, as the time series showed approximately Gaussian behaviour after the data transformation, and the significance level to $\alpha=0.05$. The spline degree is set to $q=4$ for all tests, as higher values of $q$ yielded nearly the same spectral density estimators. The smoothing parameter is selected by the maximum likelihood. The $p$-values of the tests are reported in Table \ref{tab:pvalues}.
	
	\begin{table}[h!]
		\centering
		\caption{Table of $p$-values of our test applied to the EEG data}
		\label{tab:pvalues}
		\begin{tabular}{r|cccc}
			& Initial rest & First frontal tDCS & Second frontal tDCS & Posterior tDCS \\ 
			\hline
			Initial rest       & $1$               & $<2.2 \cdot 10^{-16}$ & $<2.2 \cdot 10^{-16}$ & $<2.2 \cdot 10^{-16}$ \\ 
			First frontal tDCS & -                 & $1$                   & $0.6227$               & $<2.2 \cdot 10^{-16}$ \\ 
			Second frontal tDCS& -                 & -                     & $1$                    & $<2.2 \cdot 10^{-16}$ \\ 
			Posterior tDCS     & -                 & -                     & -                      & $1$                    \\ 
		\end{tabular}
		
		\vspace{0.5em}
		{\footnotesize
			Rows correspond to ${\boldsymbol{X}}_1$, the longest time series, and columns to ${\boldsymbol{X}}_2$, the shortest time series. Each entry reports the $p$-value of the associated test applied to ${\boldsymbol{X}}_1$ and ${\boldsymbol{X}}_2$.
		}
	\end{table}
	
	The test rejects the null hypothesis in all comparisons between the initial rest condition and both posterior and frontal tDCS, indicating a brain response to direct current stimulation. In contrast, the test does not reject the null hypothesis when comparing EEG signals obtained under frontal tDCS, but does so when comparing them with those obtained under posterior tDCS.
	These findings align with the channel's position. Since the channel is located in the posterior region of the brain, neurons in this area may be more affected by a posterior stimulation, leading to changes in the corresponding spectral density.
	
	\section{Discussion}
	
	We have demonstrated that our proposed method can detect differences in the stochastic structures of two time series of different lengths -- a problem that often arises in practice. Moreover, our results pave the way for a new approach to change-point detection in the dependence structure of a time series. Indeed, suppose we observe a linear time series with the $\mathrm{MA}(\infty)$ representation $X_t=\sum_{k=0}^{+\infty} \psi_{k}(t/T) Z_{t-k}$, $t=1,\dots,T$ where $Z_t$ is a white noise and the coefficients $\psi_{k}(t/T)$ change at some unknown time $\tau\in(0,1)$ such that $\psi_{k}(t/T)=\psi_{k}^0\mathbf 1_{\{t/T\le\tau\}}+ \psi_{k}^1\mathbf 1_{\{t/T>\tau\}}$. Then the spectral density of the observed piecewise stationary process can be written as a sum of two spectral densities \citep{Preuss_Dette_2015} and the corresponding estimator as a mixutre of the estimators of each spectral density: $\hat f (x)=(\tau/T) \hat f_1(x)+(1-\tau/T) \hat f_2(x)$.  The proposed two-sample testing method can be directly adapted to the problem of change-point detection at time $\tau$.  
	An adaptation to an unknown change-point location can be obtained by properly modifying the contrast function based on the two spectral density estimators before and after a possible change-point. The same statistic based on the $\mathcal{ L}_ 2 $ distance between two spectral densities can also be applied to the estimation of an unknown change-point. This is a promising direction for further research. 
	
	\section*{Acknowledgement}
	
	The authors thank Professors Jorge Caiado, Nuno Crato, and Linyuan Li for providing the original code implementation of their tests. They are also grateful to Alexis Pomares Pastor for granting access to his database and to Karolina Klockmann for her valuable insights.
	This research was supported by the OeAD, Agency for Education and Internationalisation (project no. FR 11/2024), which enabled the collaboration. The work of Farida Enikeeva was funded by the PHC Amadeus 2024 grant (project no. 50076QJ).
	
	

	\appendix
	
	\section{Appendix}
	
	\subsection{Proof of Theorem \ref{th:MinimaxTest}}\label{apx:ProofTheorem}
	
	This section provides the proof for Theorem \ref{th:MinimaxTest}. In this, and all the following proofs, we indicate with $\P_{0}$, $\E_{0}$ and $\Var_{0}$ ($\P_{1}$, $\E_{1}$ and $\Var_{1}$) the probabilities, expectations and variances under the hypothesis $\Hyp_0$ ($\Hyp_1$). Whenever the hypothesis is irrelevant, we drop the index.
	Moreover, we indicate with $C_1, C_2, \dots$ etc. generic constants independent of $n_1$ and $n_2$.
	
	\begin{proof}[of Theorem \ref{th:MinimaxTest}]
		The proof is divided into the following seven steps.
		
		\step{1}
		\textit{Data decomposition and additional notation.}
			
			Given $\mathbf{X}_k \sim \mathcal{N}_{n_k}(0_{n_k}, \Sigma_k)$, we transform the time series following the described procedure, obtaining new observations $Y_k^*$ of equal sample size $\widetilde{T} =2T-2$. Set now $K_k \in \mathbb{R}^{\widetilde{T} \times \widetilde{T}}$ the kernel matrices defined by
			\[
			(K_k)_{t,s} = \frac{1}{\widetilde{T}h} \mathcal{K}_{h_k,q} (x_t, x_s).
			\]
			Some properties of the kernel matrix can be found in the Supplementary Material. In particular, $K_k$ are real, symmetric and circulant matrices, with eigenvalues in $(0,1]$. Moreover, they are simultaneously diagonalizable, and we denote by $K_1 = U \Lambda_1 U^\T$ and $K_2 = U \Lambda_2 U^\T$ their respective eigenvalue decomposition. We indicate their corresponding eigenvalues as $\lambda_{k,t}$ for $t=1, \dots \widetilde{T}$.
			It follows that our test statistic can be written in terms of the kernel matrices as
			\[
			\begin{split}
				S(\mathbf{X}_1,\mathbf{X}_2)
				& = \frac{1}{\widetilde{T}} \norm{ K_1 \mathbf{Y}_1^* - K_2 \mathbf{Y}_2^* }_{\ell_2}^2. 
			\end{split}
			\]
			The vectors $\mathbf{Y}^*_k$ have components that are dependent of each other. However, Lemma \ref{lem:DataDecomposition} provides a decomposition of $\mathbf{Y}^*_k$ into two vectors, 
			\[
			\mathbf{Y}_k^* = \widetilde{\mathbf{Y}}_k^* + \overline{\mathbf{Y}}_k,
			\]
			among which $\widetilde{\mathbf{Y}}_k^*$ has independent components and $\overline{\mathbf{Y}}_k$ has small asymptotic order in probability as the sample size goes to infinity, precisely $\lnorm{\overline{\mathbf{Y}}_k}_{\ell_2} = \mathcal{O}_p (m_k^{-1/2})$ as $n_k\to\infty$. In particular, 
			\[
			\widetilde{Y}_{k,t}^* = \frac{1}{2^{1/2}} \log f_k(x_t) + \epsilon_{k,t} + \zeta_{k,t} + \xi_{k,t}, \quad t=1,\dots,\widetilde{T},
			\]
			with $\labs{\epsilon_{k,t}} \leq c'_k (n_k^{-1} + n_k^{-\eta_k} \log n_k)$ small deterministic error with $c'_k>0$ constant, $\zeta_{k,t} \stackrel{\mathclap{iid}}{\sim} \mathcal{N}(0,m_k^{-1})$, $m_k=n_k/T$, standard Gaussian random variables, and $\xi_{k,t}$ centered independent random variables with decaying moments.
			For compactness, we set $g_{k,t}=2^{-1/2} \log f_k(x_t)$ and define the following vectors
			\begin{align*}
				\mathbf{g}_k & = (g_{k,1}, \dots, g_{k,\widetilde{T}})^\T
				& \boldsymbol{\epsilon}_k & = (\epsilon_{k,1}, \dots, \epsilon_{k,\widetilde{T}})^\T \\
				\boldsymbol{\zeta}_k & = (\zeta_{k,1}, \dots, \zeta_{k,\widetilde{T}})^\T 
				& \boldsymbol{\xi}_k & = (\xi_{k,1}, \dots, \xi_{k,\widetilde{T}})^\T .
			\end{align*}

		\step{2}
		\textit{Proof of part (i).}
			
			We prove that, for $H_\alpha$ as in \eqref{eq:OrderCritValue}, $\psi_{n_1,n_2}$ is asymptotically a test of level $\alpha$ for problem \eqref{eq:Problem}. First, for some small $r_{\alpha}>0$, we bound the type I error using the triangle inequality and the union bound as follows
			\[
			\begin{split}
				\alpha(\psi_{n_1,n_2})
				& = \P_{0} \big\{ S(\mathbf{X}_1,\mathbf{X}_2) > H_\alpha \big\} \\
				& \leq \P_{0} \bigg( \frac{1}{\widetilde{T}^{1/2}} \norm{K_1 \widetilde{\mathbf{Y}}_1^* - K_2 \widetilde{\mathbf{Y}}_2^*}_{\ell_2} + \frac{1}{\widetilde{T}^{1/2}} \norm{ K_1 \overline{\mathbf{Y}}_1 - K_2 \overline{\mathbf{Y}}_2 }_{\ell_2} > H_\alpha^{1/2} \bigg) \\
				& \leq \P_{0} \bigg( \frac{1}{\widetilde{T}^{1/2}} \norm{K_1 \widetilde{\mathbf{Y}}_1^* - K_2 \widetilde{\mathbf{Y}}_2^*}_{\ell_2} + r_{\alpha} > H_\alpha^{1/2} \bigg) + \P_{0} \bigg( \frac{1}{\widetilde{T}^{1/2}} \norm{ K_1 \overline{\mathbf{Y}}_1 - K_2 \overline{\mathbf{Y}}_2 }_{\ell_2} > r_{\alpha} \bigg) .
			\end{split}
			\]
			Our goal is to bound each term asymptotically by $\alpha/2$.
			
			For the second term, once again using the union bound and the properties of the kernel matrices, we can write the following upper bound
			\[
			\begin{split}
				\P_{0} \bigg( \norm{ K_1 \overline{\mathbf{Y}}_1 - K_2 \overline{\mathbf{Y}}_2 }_{\ell_2} > \widetilde{T}^{1/2} r_{\alpha} \bigg) 
				& \leq \P_{0} \bigg( \norm{ \overline{\mathbf{Y}}_1 }_{\ell_2} + \norm{\overline{\mathbf{Y}}_2 }_{\ell_2} > \widetilde{T}^{1/2} r_{\alpha} \bigg) .
			\end{split}
			\]
			Since we know that $\lnorm{\overline{\mathbf{Y}}_k}_{\ell_2} = \mathcal{O}_p (m_k^{-1/2})$, then $\lnorm{ \overline{\mathbf{Y}}_1 }_{\ell_2} + \lnorm{\overline{\mathbf{Y}}_2 }_{\ell_2} = \mathcal{O}_p( \min\{m_1,m_2\}^{-1/2})$, and for any $\sigma >0$ there exists $M_{\sigma}>0$ and $N_{\sigma}>0$ such that we have
			\[
			\P_{0} \Big\{ \Big( \norm{ \overline{\mathbf{Y}}_1 }_{\ell_2} + \norm{\overline{\mathbf{Y}}_2 }_{\ell_2}  \Big) \min\{ m_1, m_2 \}^{1/2} > M_{\sigma} \Big\} \leq \sigma,
			\]
			for all $m_1, m_2 > N_{\sigma}$. Therefore, if we set $r_{\alpha} = M_{\alpha/2} \min \{n_1, n_2\}^{-1/2}$, then 
			\[
			\P_{0} \bigg( \norm{ K_1 \overline{\mathbf{Y}}_1 - K_2 \overline{\mathbf{Y}}_2 }_{\ell_2} > \widetilde{T}^{1/2} r_{\alpha} \bigg) \leq \frac{\alpha}{2} ,
			\]
			as $m_1, m_2 \to \infty$.
			
			For the remaining term, set for simplicity
			\[
			V = \frac{1}{\widetilde{T}} \norm{K_1 \widetilde{\mathbf{Y}}_1^* - K_2 \widetilde{\mathbf{Y}}_2^*}_{\ell_2}^2,
			\]
			so that we can write the following upper bound using Chebyshev's inequality
			\[
			\begin{split}
				\P_{0} \bigg\{ \frac{1}{\widetilde{T}} \norm{K_1 \widetilde{\mathbf{Y}}_1^* - K_2 \widetilde{\mathbf{Y}}_2^*}_{\ell_2}^2 > (H_\alpha^{1/2}-r_{\alpha})^2 \bigg\}
				& \leq \frac{\Var_{0} (V)}{ \big\{ (H_\alpha^{1/2} - r_{\alpha})^2 - \E_{0} (V) \big\}^2}.
			\end{split}
			\]
			This term is bounded by $\alpha/2$ if
			\[
			(H_\alpha^{1/2}-r_{\alpha})^2 \geq \bigg\{ \frac{2 \Var_{0} (V) }{\alpha} \bigg\}^{1/2} + \E_{0} (V),
			\]
			which holds true for $H_\alpha \geq r_\alpha$ defined as follows
			\begin{equation}\label{eq:DefCritValue}
				H_\alpha = \bigg( \bigg[ \bigg\{ \frac{2 \Var_{0} (V) }{\alpha} \bigg\}^{1/2} + \E_0 (V) \bigg]^{1/2} + r_{\alpha} \bigg)^2.
			\end{equation}
			We want to show that $H_\alpha$ satisfies \eqref{eq:OrderCritValue} when $h_k \to 0$ and $h_k T \to \infty$. 
			We introduce the following notation
			\begin{equation}\label{eq:DefMuj}
				\boldsymbol{\mu}_j = K_1 (\mathbf{g}_1 + \boldsymbol{\epsilon}_1) - K_2 (\mathbf{g}_2 + \boldsymbol{\epsilon}_2),
			\end{equation}
			under hypothesis $\Hyp_j$ with $j=0,1$, and
			\begin{equation}\label{eq:Defr}
				\mathbf{v} = K_1 (\boldsymbol{\zeta}_1 + \boldsymbol{\xi}_1) - K_2 (\boldsymbol{\zeta}_2 +\boldsymbol{\xi}_2),
			\end{equation}
			so that we can write $V = \lnorm{\boldsymbol{\mu}_j + \mathbf{v} }_{\ell_2}^2 / \widetilde{T}$, obtaining
			\begin{equation}\label{eq:DecompE0[V]}
				\E_{0} (V) = \frac{1}{\widetilde{T}} \boldsymbol{\mu}_0^\T \boldsymbol{\mu}_0 + \frac{1}{\widetilde{T}} \tr(\Omega) ,
			\end{equation}
			having set $\Omega = \E (\mathbf{v} \mathbf{v}^\T)$.
			Similarly, we can write
			\[
			\begin{split}
				\Var_{0} (V) 
				& = \frac{1}{\widetilde{T}^2} \Big[ 4 \E \big\{ (\boldsymbol{\mu}_0^\T \mathbf{v})^2 \big\} + \Var ( \mathbf{v}^\T \mathbf{v} ) + 4 \E ( \boldsymbol{\mu}_0^\T \mathbf{v} \mathbf{v}^\T \mathbf{v} ) \Big] .
			\end{split}
			\]
			Moreover, using $ (x+y)^{1/2} \leq x^{1/2} + y^{1/2}$ and $2 (xy)^{1/2} \leq x + y$ we get
			\begin{align}
				\bigg\{ \frac{2 \Var_{0} (V) }{\alpha} \bigg\}^{1/2}
				& \leq \frac{2}{\widetilde{T}} \bigg( \frac{2}{\alpha} \Big[ \E \big\{ (\boldsymbol{\mu}_0^\T \mathbf{v})^2 \big\} + \E ( \boldsymbol{\mu}_0^\T \mathbf{v} \mathbf{v}^\T \mathbf{v} ) \Big] \bigg)^{1/2} + \frac{1}{\widetilde{T}} \bigg\{ \frac{2 \Var ( \mathbf{v}^\T \mathbf{v} )}{ \alpha } \bigg\}^{1/2} \notag \\
				& \leq \frac{1}{4} \frac{\E \big\{ (\boldsymbol{\mu}_0^\T \mathbf{v})^2 \big\}}{\widetilde{T} \norm{\Omega}_2} + \frac{1}{4} \frac{\E ( \boldsymbol{\mu}_0^\T \mathbf{v} \mathbf{v}^\T \mathbf{v} )}{\widetilde{T} \norm{\Omega}_2} + \frac{8}{\alpha}  \frac{\norm{\Omega}_2}{\widetilde{T}} + \frac{1}{\widetilde{T}} \bigg\{ \frac{2 \Var ( \mathbf{v}^\T \mathbf{v} )}{ \alpha } \bigg\}^{1/2}. \label{eq:DecompVar0[V]}
			\end{align}
			The analysis of the asymptotic order of each term in \eqref{eq:DecompE0[V]} and \eqref{eq:DecompVar0[V]} can be found in Lemma \ref{lem:Orders} in the Supplementary Material, by which we can deduce that there exists a constant $\mathcal{C}_1 = \mathcal{C}_1 (\eta_1, \eta_2, q, c, M_0, M_1)>0$ such that as $h_k \to 0$, and $Th_k \to \infty$
			\[
			\begin{split}
				H_\alpha & \leq \mathcal{C}_1 \sum_{k=1,2}  \bigg\{ h_k^{2\min\{\eta, 2q\}} +  \frac{h_k^{\min\{\eta, 2q\}-1}}{m_k n_k} +\frac{h_k^{\min\{\eta, 2q\}}}{ n_k^{1/2} } + \frac{h_k^{(\min\{\eta, 2q\} - 1)/2}}{ m_k^{1/2} n_k } \\
				&\qquad +\frac{1}{\alpha n_k}  + \frac{1}{(\alpha h_k)^{1/2}n_k} +  \frac{1}{(\alpha h_k)^{1/4}n_k}
				+ \frac{1}{ \alpha^{1/2} n_k }\\
				&\qquad + \frac{1}{n_k} +\frac{1}{n_k h_k^{1/2}} +\frac{1}{n_k h_k} + \frac{\log^2 n_k}{n_k^{2\eta}} 
				+ \frac{\log n_k}{n_k^{\eta + 1/2}}  \bigg\},
			\end{split}
			\]
			
			which reduces to \eqref{eq:OrderCritValue} for $\eta \geq 1/2$.
		
		\step{3}
		\textit{Proof of part (ii), type II error.}
			
			In this step, we find an upper bound on the Type II error.
			Similarly to what has been done in the previous step, we use the reverse triangle inequality and the union bound as follows
			\[
			\begin{split}
				\beta(\psi_{n_1,n_2})
				& = \P_{1} \big\{ S(\mathbf{X}_1, \mathbf{X}_2) < H_\alpha \big\} \\
				& \leq \P_{1} \bigg( \frac{1}{\widetilde{T}^{1/2}} \abs{ \norm{K_1 \widetilde{\mathbf{Y}}_1^* - K_2 \widetilde{\mathbf{Y}}_2^*}_{\ell_2} - \norm{K_1 \overline{\mathbf{Y}}_1 - K_2 \overline{\mathbf{Y}}_2 }_{\ell_2}} < H_\alpha^{1/2} \bigg) \\
				& \leq \P_{1} \bigg( \frac{1}{\widetilde{T}^{1/2}} \norm{K_1 \widetilde{\mathbf{Y}}_1^* - K_2 \widetilde{\mathbf{Y}}_2^*} - r_{\beta} < H_\alpha^{1/2} \bigg) + \P_{1} \bigg( \norm{ K_1 \overline{\mathbf{Y}}_1 - K_2 \overline{\mathbf{Y}}_2 }_{\ell_2} > \widetilde{T}^{1/2} r_{\beta} \bigg) .
			\end{split}
			\]
			
			Once again, using the asymptotic order in probability of $\lnorm{\overline{\mbox{Y}}_k}_2$, we set $r_{\beta} = M_{\beta/2} \min \{n_1, n_2\}^{-1/2}$, obtaining
			\[
			\P_{1} \bigg( \norm{ K_1 \overline{\mathbf{Y}}_1 - K_2 \overline{\mathbf{Y}}_2 }_{\ell_2} > \widetilde{T}^{1/2} r_{\beta} \bigg) \leq \frac{\beta}{2},
			\]
			as $m_1, m_2 \to \infty$.
			
			We proceed similarly also for the first term, for which we get, using Chebyshev's inequality
			\[
			\begin{split}
				\P_{1} \bigg\{ \frac{1}{\widetilde{T}} \norm{K_1 \widetilde{\mathbf{Y}}_1^* - K_2 \widetilde{\mathbf{Y}}_2^*}_{\ell_2}^2 < (H_\alpha^{1/2} + r_{\beta})^2 \bigg\}
				& \leq \frac{\Var_{1} (V)}{\big\{ ( H_\alpha^{1/2} + r_{\beta})^2 - \E_{1} (V) \big\}^2}.
			\end{split}
			\]
			This term is therefore bounded by $\beta/2$ if
			\begin{equation}\label{eq:TypeIIErrorRequest}
				\E_{1} (V) \geq \bigg\{ \frac{2 \Var_{1} (V)}{\beta} \bigg\}^{1/2} + ( H_\alpha^{1/2} + r_{\beta})^2.
			\end{equation}
		
		\step{4}
		\textit{Proof of part (ii), separation request.}
			
			Recalling \eqref{eq:DefCritValue}, in order for \eqref{eq:TypeIIErrorRequest} to be satisfied, the minimum separation rate $\rho_{n_1,n_2}^*$ of the spectral densities need to verify the following condition
			\begin{equation}\label{eq:RequestI}
				\E_{1} (V) \geq \bigg\{ \frac{2 \Var_{1} (V)}{\beta} \bigg\}^{1/2} +  \bigg( \bigg[ \bigg\{ \frac{2 \Var_{0} (V)}{\alpha} \bigg\}^{1/2} + \E_{0} (V) \bigg]^{1/2} + r_{\alpha} + r_{\beta} \bigg)^2.
			\end{equation}
			
			Similarly to \eqref{eq:DecompE0[V]} and \eqref{eq:DecompVar0[V]}, we can write
			\begin{equation}\label{eq:DecompE1[V]}
				\E_{1} (V) = \frac{1}{\widetilde{T}} \boldsymbol{\mu}_1^\T \boldsymbol{\mu}_1 + \frac{1}{\widetilde{T}} \tr(\Omega) ,
			\end{equation}
			and
			\begin{equation}\label{eq:DecompVar1[V]}
				\bigg\{ \frac{2 \Var_{1} (V)}{\beta} \bigg\}^{1/2}
				\leq 
				\frac{1}{4} \frac{\E \big\{ (\boldsymbol{\mu}_1^\T \mathbf{v})^2 \big\}}{\widetilde{T} \norm{\Omega}_2} + \frac{1}{4} \frac{\E ( \boldsymbol{\mu}_1^\T \mathbf{v} \mathbf{v}^\T \mathbf{v} )}{\widetilde{T} \norm{\Omega}_2} + \frac{8}{\beta} \frac{\norm{\Omega}_2}{\widetilde{T}} + \frac{1}{\widetilde{T}} \bigg\{ \frac{2 \Var ( \mathbf{v}^\T \mathbf{v} )}{ \beta } \bigg\}^{1/2} .
			\end{equation}
			
			By \eqref{eq:DecompE0[V]}, \eqref{eq:DecompVar0[V]}, \eqref{eq:DecompE1[V]} and \eqref{eq:DecompVar1[V]}, the request in \eqref{eq:RequestI} is satisfied when the following condition holds
			\begin{align}\label{eq:RequestII}
				\frac{1}{\widetilde{T}} \boldsymbol{\mu}_1^\T \boldsymbol{\mu}_1 
				& \geq \frac{1}{\widetilde{T}} \boldsymbol{\mu}_0^\T \boldsymbol{\mu}_0 + \frac{\E \big\{ (\boldsymbol{\mu}_0^\T \mathbf{v})^2 \big\}}{4 \widetilde{T} \norm{\Omega}_2} + \frac{\E \big\{ (\boldsymbol{\mu}_1^\T \mathbf{v})^2 \big\}}{4 \widetilde{T} \norm{\Omega}_2} + \frac{\E ( \boldsymbol{\mu}_0^\T \mathbf{v} \mathbf{v}^\T \mathbf{v} )}{4 \widetilde{T} \norm{\Omega}_2} + \frac{\E ( \boldsymbol{\mu}_1^\T \mathbf{v} \mathbf{v}^\T \mathbf{v} )}{4 \widetilde{T} \norm{\Omega}_2} + \notag \\
				& \quad + 8 \bigg( \frac{1}{\alpha} + \frac{1}{\beta} \bigg) \frac{\norm{\Omega}_2}{ \widetilde{T} } + 2^{1/2} \bigg( \frac{1}{ \alpha^{1/2} } + \frac{1}{ \beta^{1/2} } \bigg) \frac{ \{ \Var ( \mathbf{v}^\T \mathbf{v} ) \}^{1/2}}{\widetilde{T}} + (r_{\alpha} + r_{\beta})^2 +  (r_{\alpha} + r_{\beta}) \times \notag \\
				& \quad \times \Bigg[ \frac{\E \big\{ (\boldsymbol{\mu}_0^\T \mathbf{v})^2 \big\}}{\widetilde{T} \norm{\Omega}_2} + \frac{\E ( \boldsymbol{\mu}_0^\T \mathbf{v} \mathbf{v}^\T \mathbf{v} )}{\widetilde{T} \norm{\Omega}_2} + \frac{32}{\alpha} \frac{\norm{\Omega}_2}{\widetilde{T}} + 4 \bigg\{ \frac{2 \Var_{0} (V)}{\alpha} \bigg\}^{1/2} + \frac{4}{\widetilde{T}} \boldsymbol{\mu}_0^\T \boldsymbol{\mu}_0 + \frac{4}{\widetilde{T}} \tr(\Omega) \Bigg]^{1/2}.
			\end{align}
			
			Once again, the orders of each of the terms can be found in Lemma \ref{lem:Orders} in the Supplementary Material. Recalling that $\eta \geq 1/2$, we can reformulate \eqref{eq:RequestII} as
			\begin{align}\label{eq:RequestIII}
				& \frac{1}{\widetilde{T}} \boldsymbol{\mu}_1^\T \boldsymbol{\mu}_1 - \frac{\boldsymbol{\mu}_1^\T \Omega \boldsymbol{\mu}_1}{4 \widetilde{T} \norm{\Omega}_2} - \bigg(\frac{1}{n_1 m_1 h_1} + \frac{1}{n_2 m_2 h_2}\bigg) \frac{\norm{\boldsymbol{\mu}_1}_2}{\widetilde{T}^{1/2}} \notag \\
				& \qquad \geq C_1 \Bigg\{ h_1^{2\min\{\eta, 2q\}} + h_2^{2\min\{ \eta, 2q \}} + \bigg( \frac{1}{ \alpha^{1/2} } + \frac{1}{ \beta^{1/2} } \bigg) \bigg( \frac{1}{n_1 h_1^{1/2}} + \frac{1}{n_2 h_2^{1/2}} \bigg) \Bigg\} .
			\end{align}
			
			The last term of the left-hand side can be bounded using the relation $2 x y \leq x^2 + y^2$, from which we get
			\[
			\frac{1}{n_k m_k h_k} \frac{\norm{\boldsymbol{\mu}_1}_2}{\widetilde{T}^{1/2}} \leq \frac{\norm{\boldsymbol{\mu}_1}_2^2}{4 \widetilde{T}} + \frac{1}{(n_k m_k h_k)^2},
			\]
			and as a consequence the request \eqref{eq:RequestIII} is satisfied whenever the following holds
			\begin{equation}\label{eq:RequestIV}
				\frac{1}{\widetilde{T}} \boldsymbol{\mu}_1^\T \boldsymbol{\mu}_1 - \frac{\boldsymbol{\mu}_1^\T \Omega \boldsymbol{\mu}_1}{ 2 \widetilde{T} \norm{\Omega}_2} \geq C_2 \Bigg\{ h_1^{2\min\{\eta, 2q\}} + h_2^{2\min\{ \eta, 2q \}} + \bigg( \frac{1}{ \alpha^{1/2} } + \frac{1}{ \beta^{1/2} } \bigg) \bigg( \frac{1}{n_1 h_1^{1/2}} + \frac{1}{n_2 h_2^{1/2}} \bigg) \Bigg\}.
			\end{equation}
			
			Set $\tilde{\mathbf{g}}_k = U^\T \mathbf{g}_k$ and $\tilde{\boldsymbol{\epsilon}}_k = U^\T \boldsymbol{\epsilon}_k$ the Fourier transform of $\mathbf{g}_k$ and $\boldsymbol{\epsilon}_k$ respectively, and indicate by $\mathbf{I}_{\widetilde{T}}$ the $\widetilde{T} \times \widetilde{T}$ identity matrix. The left-hand side of \eqref{eq:RequestIV} can be written as
			\[
			\begin{split}
				\frac{1}{\widetilde{T}} \boldsymbol{\mu}_1^\T \bigg(\mathbf{I}_{\widetilde{T}} - \frac{\Omega}{2 \norm{\Omega}_2}\bigg) \boldsymbol{\mu}_1 
				& = \frac{1}{\widetilde{T}} \sum_{t=1}^{\widetilde{T}} \bigg(1 - \frac{\lambda_t^{(\Omega)}}{2 \lambda_1^{(\Omega)}} \bigg) (\lambda_{1,t} \tilde{g}_{1,t} - \lambda_{2,t} \tilde{g}_{2,t} + \lambda_{1,t} \tilde{\epsilon}_{1,t} - \lambda_{2,t} \tilde{\epsilon}_{2,t} )^2 \\
				& = \frac{1}{\widetilde{T}} (\tilde{\mathbf{g}}_1^\T, - \tilde{\mathbf{g}}_2^\T) \begin{pmatrix}
					\Lambda_1 \\ \Lambda_2
				\end{pmatrix} \bigg(\mathbf{I}_{\widetilde{T}} - \frac{\Lambda^{(\Omega)}}{2\norm{\Omega}_2}\bigg) (\Lambda_1, \Lambda_2) \begin{pmatrix}
					\tilde{\mathbf{g}}_1 \\ -\tilde{\mathbf{g}}_2
				\end{pmatrix} + \mathcal{O} \bigg( \frac{1}{n_1} + \frac{1}{n_2} \bigg) \\
				& = \frac{1}{\widetilde{T}} (\tilde{\mathbf{g}}_1^\T, - \tilde{\mathbf{g}}_2^\T) \Bigg\{ \begin{pmatrix}
					\Lambda_1^2 & \Lambda_1 \Lambda_2 \\ 
					\Lambda_2 \Lambda_1 & \Lambda_2^2
				\end{pmatrix} - \frac{1}{2\norm{\Omega}_2} \begin{pmatrix}
					\Lambda_1 \Lambda^{(\Omega)} \Lambda_1 & \Lambda_1 \Lambda^{(\Omega)} \Lambda_2 \\
					\Lambda_2 \Lambda^{(\Omega)} \Lambda_1 & \Lambda_2 \Lambda^{(\Omega)} \Lambda_2
				\end{pmatrix} 
				\Bigg\}
				\begin{pmatrix}
					\tilde{\mathbf{g}}_1 \\ -\tilde{\mathbf{g}}_2
				\end{pmatrix} + \\
				& \quad + \mathcal{O} \bigg( \frac{1}{n_1} + \frac{1}{n_2} \bigg).
			\end{split}
			\]
			Defining the following $2\widetilde{T} \times 2\widetilde{T}$ matrix 
			\[
			A = \begin{pmatrix}
				\mathbf{I}_{\widetilde{T}} - 2 \Lambda_1^2 + \frac{1}{ \norm{\Omega} } \Lambda_1^2 \Lambda^{(\Omega)} & \mathbf{I}_{\widetilde{T}} - 2 \Lambda_1 \Lambda_2 + \frac{1}{ \norm{\Omega} } \Lambda_1 \Lambda_2 \Lambda^{(\Omega)} \\
				\mathbf{I}_{\widetilde{T}} - 2 \Lambda_1 \Lambda_2 + \frac{1}{ \norm{\Omega} } \Lambda_1 \Lambda_2 \Lambda^{(\Omega)} & \mathbf{I}_{\widetilde{T}} - 2 \Lambda_2^2 + \frac{1}{ \norm{\Omega} } \Lambda_2^2 \Lambda^{(\Omega)}
			\end{pmatrix} ,
			\]
			we can write
			\begin{align}\label{eq:LHS}
				\frac{1}{\widetilde{T}} \boldsymbol{\mu}_1^\T \bigg(\mathbf{I}_{\widetilde{T}} - \frac{\Omega}{2\norm{\Omega}_2}\bigg) \boldsymbol{\mu}_1
				& = \frac{1}{2 \widetilde{T}} \norm{\tilde{\mathbf{g}}_1 - \tilde{\mathbf{g}}_2}_{\ell_2}^2 - \frac{1}{2 \widetilde{T}} (\tilde{\mathbf{g}}_1^\T, - \tilde{\mathbf{g}}_2^\T) A \begin{pmatrix}
					\tilde{\mathbf{g}}_1 \\ -\tilde{\mathbf{g}}_2
				\end{pmatrix}
				+ \mathcal{O} \bigg( \frac{1}{n_1} + \frac{1}{n_2} \bigg).
			\end{align} 
			In the next step we prove that
			\begin{equation}\label{eq:OrderAB}
				\frac{1}{2 \widetilde{T}} (\tilde{\mathbf{g}}_1^\T, - \tilde{\mathbf{g}}_2^\T) A \begin{pmatrix}
					\tilde{\mathbf{g}}_1 \\ -\tilde{\mathbf{g}}_2
				\end{pmatrix} = \mathcal{O} \big( \max\{h_1, h_2\}^{\eta^*} \big) ,
			\end{equation}
			as $h_k \to 0$ and $h_k T \to \infty$, from which, recalling \eqref{eq:LHS}, the request \eqref{eq:RequestIV} becomes
			\begin{equation}\label{eq:RequestV}
				\frac{1}{\widetilde{T}} \norm{\mathbf{g}_1-\mathbf{g}_2}_{\ell_2}^2 
				\geq C_3 \Bigg\{ h_1^{2\eta^*} + h_2^{2\eta^*} + \bigg( \frac{1}{\alpha^{1/2}} + \frac{1}{\beta^{1/2}} \bigg) \bigg( \frac{1}{n_1 h_1^{1/2}} + \frac{1}{n_2 h_2^{1/2}} \bigg) \Bigg\}.
			\end{equation}
		
		\step{5}
		\textit{Proof of part (ii), proof of \eqref{eq:OrderAB}.}
			
			We define the $2\widetilde{T} \times 2\widetilde{T}$ matrix
			\[
			B = \begin{pmatrix}
				(\mathbf{I}_{\widetilde{T}} - \Lambda_1^2)^2 & (\mathbf{I}_{\widetilde{T}} - \Lambda_1 \Lambda_2)^2 \\
				(\mathbf{I}_{\widetilde{T}} - \Lambda_1 \Lambda_2)^2 & (\mathbf{I}_{\widetilde{T}} - \Lambda_2^2)^2
			\end{pmatrix}.
			\]
			We split the proof into two steps.
			
			First, we will prove that 
			\begin{equation}\label{eq:OrderB}
				\frac{1}{\widetilde{T}} (\tilde{\mathbf{g}}_1^\T, - \tilde{\mathbf{g}}_2^\T) B
				\begin{pmatrix}
					\tilde{\mathbf{g}}_1 \\ -\tilde{\mathbf{g}}_2
				\end{pmatrix} 
				= \mathcal{O} \Big\{ \max\{h_1, h_2\}^{2 \min\{ \eta_1, \eta_2, 2q\}} \Big\}.
			\end{equation}
			By definition of $B$, we have 
			\[
			\begin{split}
				(\tilde{\mathbf{g}}_1^\T, - \tilde{\mathbf{g}}_2^\T) B
				\begin{pmatrix}
					\tilde{\mathbf{g}}_1 \\ -\tilde{\mathbf{g}}_2
				\end{pmatrix} 
				& = \tilde{\mathbf{g}}_1^\T (\mathbf{I}_{\widetilde{T}} - \Lambda_1^2)^2 \tilde{\mathbf{g}}_1 + \tilde{\mathbf{g}}_2^\T (\mathbf{I}_{\widetilde{T}} - \Lambda_2^2)^2 \tilde{\mathbf{g}}_2 - 2  \tilde{\mathbf{g}}_1^\T (\mathbf{I}_{\widetilde{T}} - \Lambda_1 \Lambda_2)^2 \tilde{\mathbf{g}}_2 .
			\end{split}
			\]
			We start by looking at the mixed term. 
			By the properties of the Kernel matrices described in the Supplementary Material, we know that the eigenvalues $\lambda_{2,t}$ are symmetric around $\widetilde{T}/2$ and they have the following explicit form
			\[
			\lambda_{2,t} = \frac{1}{1 + (c_t \pi h_2 t)^{2q}},
			\]
			for some $c_t \in [2^{1-(2q)^{-1}}, 2]$ for all $t = 1, \dots, \widetilde{T}/2$.
			Without loss of generality assume $h_1 \leq h_2$, and consequently $\lambda_{1,t} \geq \lambda_{2,t}$. 
			Since $f_k (x) \in [c, M_0]$ and the logarithm is Lipschitz continuous on a compact, we can state that $\mathbf{g}_k$ is itself $\eta_k$-Hölder continuous, and, therefore, its Fourier coefficients $\tilde{g}_{k,t}$ decay as $t^{-\eta_k}$.
			By symmetry, we can write
			\[
			\begin{split}
				\abs{\frac{1}{\widetilde{T}} \tilde{\mathbf{g}}_1^\T (\mathbf{I}_{\widetilde{T}} - \Lambda_1 \Lambda_2)^2 \tilde{\mathbf{g}}_2 }
				& \leq \frac{2}{\widetilde{T}} \sum_{t=1}^{\widetilde{T}/2} (1 - \lambda_{2,t}^2)^2 \abs{\tilde{g}_{1,t} \tilde{g}_{2,t}} \\
				& \leq \frac{2}{\widetilde{T}} \sum_{t=1}^{\widetilde{T}/2} \frac{4(c_t \pi h_{2} t)^{4q} + 4(c_t \pi h_{2} t)^{6q} + (c_t \pi h_{2} t)^{8q}}{ \{1 + (c_t \pi h_{2} t)^{2q}\}^4} t^{-2 \min\{\eta_1, \eta_2\}} , 
			\end{split}
			\]
			from which if $\min\{\eta_1, \eta_2\} > 2q+1/2$, then
			\[
			\begin{split}
				\frac{8}{\widetilde{T}} \sum_{t=1}^{\widetilde{T}/2} \frac{(c_t \pi h_{2} t)^{4q} }{ \{1 + (c_t \pi h_{2} t)^{2q}\}^4 } t^{-2 \min\{\eta_1, \eta_2\}}
				& \leq 8 (2 \pi)^{4q}\frac{ h_{2}^{4q} }{\widetilde{T}}  \sum_{t=1}^{\widetilde{T}/2} \frac{1}{ t^{2 \min\{\eta_1, \eta_2\}- 4q}} \\	
				& \leq 8 (2 \pi)^{4q}\frac{ h_{2}^{4q} }{\widetilde{T}} \zeta \{2 \min\{\eta_1, \eta_2\} - 4q\} \\
				& = \mathcal{O} \bigg( \frac{ h_{2}^{4q} }{\widetilde{T}} \bigg),
			\end{split}
			\]
			where $\zeta$ is the Riemann zeta function, if $2q < \min\{\eta_1, \eta_2\} \leq 2q+1/2$, then 
			\[
			\begin{split}
				\frac{8}{\widetilde{T}} \sum_{t=1}^{\widetilde{T}/2} \frac{(c_t \pi h_{2} t)^{4q} }{ \{1 + (c_t \pi h_{2} t)^{2q}\}^4} t^{-2 \min\{\eta_1, \eta_2\}}
				& \leq 8 (2 \pi)^{4q}\frac{ h_{2}^{4q} }{\widetilde{T}}  H_{\frac{\widetilde{T}}{2}, 2 \min\{\eta_1, \eta_2\} - 4q} 
				= \mathcal{O} \big( h_{2}^{4q} \big),
			\end{split}
			\]
			where $H_{i,j}$ is the $i$th generalized harmonic number of order $j$, and if $1/2 < \min\{\eta_1, \eta_2\} \leq 2q $, then
			\[
			\begin{split}
				\frac{8}{\widetilde{T}} \sum_{t=1}^{\widetilde{T}/2} \frac{(c_t \pi h_{2} t)^{4q} }{ \{1 + (c_t \pi h_{2} t)^{2q} \}^4 } t^{-2 \min\{\eta_1, \eta_2\}}
				& \leq 8 (2 \pi)^{2 \min\{\eta_1, \eta_2\} }\frac{ h_{2}^{2 \min\{\eta_1, \eta_2\}} }{\widetilde{T}} \sum_{t=1}^{\widetilde{T}/2} \frac{(c_t \pi h_{2} t)^{4q - 2 \min\{\eta_1, \eta_2\}} }{ \{1 + (c_t \pi h_{2} t)^{2q}\}^4 }  \\
				& \leq 4 (2 \pi)^{2 \min\{\eta_1, \eta_2\} } \frac{ h_{2}^{2 \min\{\eta_1, \eta_2\}} }{\widetilde{T}} \widetilde{T} 
				= \mathcal{O} \Big\{ h_{2}^{2 \min\{\eta_1, \eta_2\}} \Big\} ,
			\end{split}
			\]
			from which, since the choice $h_1 \leq h_2$ is arbitrary, we obtain \eqref{eq:OrderB}.
			The remaining terms follow similarly, as
			\[
			\begin{split}
				\frac{1}{\widetilde{T}} \tilde{\mathbf{g}}_k^\T (\mathbf{I}_{\widetilde{T}} - \Lambda_k^2)^2 \tilde{\mathbf{g}}_k 
				& = \frac{2}{\widetilde{T}} \sum_{t=1}^{\widetilde{T}/2} (1 - \lambda_{k,t}^2)^2 \tilde{g}_{k,t}^2  \\
				& = \frac{2}{\widetilde{T}} \sum_{t=1}^{\widetilde{T}/2} \frac{4(c_t \pi h_k t)^{4q} + 4(c_t \pi h_k t)^{6q} + (c_t \pi h_k t)^{8q}}{ \{1 + (c_t \pi h_k t)^{2q} \}^4} \tilde{g}_{k,t}^2 \\
				& = \mathcal{O} \Big( h_k^{2 \min\{ \eta_1, \eta_2, 2q\}} \Big) .
			\end{split}
			\]
			
			As a second step, we now prove that
			\begin{equation}\label{eq:OrderA}
				\frac{1}{\widetilde{T}} (\tilde{\mathbf{g}}_1^\T, - \tilde{\mathbf{g}}_2^\T) (A-B)
				\begin{pmatrix}
					\tilde{\mathbf{g}}_1 \\ -\tilde{\mathbf{g}}_2
				\end{pmatrix} 
				= \mathcal{O} \Big( \max\{h_1, h_2\}^{2 \eta^*} \Big).
			\end{equation}
			Observe that
			\[
			\begin{split}
				(\tilde{\mathbf{g}}_1^\T, - \tilde{\mathbf{g}}_2^\T) (A - B)
				\begin{pmatrix}
					\tilde{\mathbf{g}}_1 \\ -\tilde{\mathbf{g}}_2
				\end{pmatrix}
				& = \tilde{\mathbf{g}}_1^\T \Lambda_1^2 \bigg( \frac{1}{\norm{\Omega}}\Lambda^{(\Omega)} - \Lambda_1^2\bigg) \tilde{\mathbf{g}}_1 + \tilde{\mathbf{g}}_2^\T \Lambda_2^2 \bigg( \frac{1}{\norm{\Omega}} \Lambda^{(\Omega)} - \Lambda_2^2 \bigg) \tilde{\mathbf{g}}_2 \\
				& \quad - 2  \tilde{\mathbf{g}}_1^\T \Lambda_1 \Lambda_2 \bigg(\frac{1}{\norm{\Omega}} \Lambda^{(\Omega)} - \Lambda_1 \Lambda_2 \bigg) \tilde{\mathbf{g}}_2  ,
			\end{split}
			\]
			Now,
			\[
			\begin{split}
				& \frac{1}{\widetilde{T}} \tilde{\mathbf{g}}_1^\T \Lambda_1^2 \bigg( \frac{1}{\norm{\Omega}}\Lambda^{(\Omega)} - \Lambda_1^2\bigg) \tilde{\mathbf{g}}_1 + \frac{1}{\widetilde{T}} \tilde{\mathbf{g}}_2^\T \Lambda_2^2 \bigg( \frac{1}{\norm{\Omega}} \Lambda^{(\Omega)} - \Lambda_2^2 \bigg) \tilde{\mathbf{g}}_2 \\
				& = \frac{2}{\widetilde{T}} \sum_{t=1}^{\widetilde{T}/2} \Bigg\{ \lambda_{1,t}^2 \frac{m_1 (\lambda_{2,t}^2 - \lambda_{1,t}^2)}{m_1 + m_2} \tilde{g}_{1,t}^2 + \lambda_{2,t}^2 \frac{m_2 (\lambda_{1,t}^2 - \lambda_{2,t}^2)}{m_1 + m_2} \tilde{g}_{2,t}^2 \Bigg\} \\
				& = \mathcal{O} \Big( \max\{h_1, h_2\}^{2 \eta^*} \Big) ,
			\end{split}
			\]
			because, without loss of generality we can assume that $h_1 \leq h_2$, and consequently $\lambda_{1,t} \geq \lambda_{2,t}$, making the sum of the first term negative and
			\[
			\begin{split}
				\frac{1}{\widetilde{T}} \sum_{t=1}^{\widetilde{T}} \lambda_{2,t}^2 \frac{m_1 (\lambda_{1,t}^2 - \lambda_{2,t}^2)}{m_1 + m_2} \tilde{g}_{1,t}^2
				& \leq \frac{2}{\widetilde{T}} \sum_{t=1}^{\widetilde{T}/2} \frac{m_1}{m_1 + m_2} \frac{ 2 (c_t \pi t)^{2q} (h_2^{2q} - h_1^{2q}) + (c_t \pi t)^{4q} (h_2^{4q} - h_1^{4q})}{(1+(c_t \pi h_1 t)^{2q})^2 (1 + (c_t \pi h_2 t)^{2q})^2} \tilde{g}_{2,t}^2 \\
				& = \mathcal{O} \Big( h_{2}^{2 \eta^*} \Big).
			\end{split}
			\]
			Similarly, assuming without loss of generality that $h_1 \leq h_2$, and consequently $\lambda_{1,t} \geq \lambda_{2,t}$ for all $t=1,\dots, \widetilde{T}$,
			\[
			\begin{split}
				\abs{ \frac{1}{\widetilde{T}} \tilde{\mathbf{g}}_1^\T \Lambda_1 \Lambda_2 \bigg( \frac{1}{\norm{\Omega}}\Lambda^{(\Omega)} - \Lambda_1 \Lambda_2 \bigg) \tilde{\mathbf{g}}_2 } 
				& = \frac{2}{\widetilde{T}} \abs{ \sum_{t=1}^{\widetilde{T}/2} \lambda_{1,t} \lambda_{2,t}  \bigg(\frac{m_2 \lambda_{1,t}^2 + m_1 \lambda_{2,t}^2}{m_1 + m_2} - \lambda_{1,t} \lambda_{2,t} \bigg) \tilde{g}_{1,t} \tilde{g}_{2,t} } \\
				& \leq \frac{2}{\widetilde{T}} \sum_{t=1}^{\widetilde{T}/2} \lambda_{1,t} \lambda_{2,t} \abs{(\lambda_{1,t} - \lambda_{2,t}) \frac{m_2 \lambda_{1,t} - m_1 \lambda_{2,t} }{m_1 + m_2}} \abs{ \tilde{g}_{1,t} \tilde{g}_{2,t} } \\
				& = \mathcal{O} \Big( h_{2}^{2 \eta^*} \Big),
			\end{split}
			\]
			having
			\[
			\begin{split}
				& \frac{2}{\widetilde{T}} \sum_{t=1}^{\widetilde{T}/2} \lambda_{1,t} \lambda_{2,t} \abs{(\lambda_{1,t} - \lambda_{2,t}) \frac{m_2 \lambda_{1,t} - m_1 \lambda_{2,t} }{m_1 + m_2}} \abs{ \tilde{g}_{1,t} \tilde{g}_{2,t} } \\
				& \leq \frac{2}{\widetilde{T}} \sum_{t=1}^{\widetilde{T}/2} \frac{(c_t \pi t)^{2q} \abs{h_2^{2q} - h_1^{2q}} }{\{1+(c_t \pi t h_1)^{2q}\}\{1 + (c_t \pi t h_2)^{2q}\}} t^{- 2 \min\{\eta_1, \eta_2\}} \\
				& \leq \frac{2}{\widetilde{T}} \sum_{t=1}^{\widetilde{T}/2} \frac{(c_t \pi t)^{2q} h_{2}^{2q} }{\{1+(c_t \pi t h_1)^{2q}\}\{1 + (c_t \pi t h_2)^{2q}\}} t^{- 2 \min\{\eta_1, \eta_2\}} \\
				& = \mathcal{O} \Big( h_{2}^{2 \eta^*} \Big).
			\end{split}
			\]
		
		\step{6}
		\textit{Proof of part (ii), norm of the difference of spectral densities.}
			
			We observe that $(g_1 - g_2)^2 \in \mathcal{F}_{\min\{ \eta_1, \eta_2\}}$, therefore, by Lemma \ref{lem:Sum2Integral} in the Supplementary Material, we can write
			\[
			\begin{split}
				\frac{1}{\widetilde{T}} \sum_{t=1}^{\widetilde{T}} \bigg\{g_1 \bigg( \frac{t}{\widetilde{T}} \bigg) - g_2 \bigg( \frac{t}{\widetilde{T}} \bigg) \bigg\}^2 
				= \norm{g_1 - g_2}_{\mathcal{L}_2([0,1])}^2 + \mathcal{O} (\widetilde{T}^{\min\{ \eta_1, \eta_2\}}) .
			\end{split}
			\]
			Observe that by \eqref{eq:ConditionNuk} we can write $\mathcal{O} (\widetilde{T}^{\min\{ \eta_1, \eta_2\}}) = \mathcal{O}(\max\{h_k\}^{2 \eta^*})$.
			
			Observe also that
			\[
			\norm{g_1 - g_2}_{\mathcal{L}_2 ([0,1])}^2 
			= \frac{1}{2} \int_0^1 \log^2\frac{f_1(x)}{f_2(x)} dx \geq \frac{1}{2} \int_0^1 \frac{\{f_1(x)-f_2(x)\}^2}{\max \{f_1(x)^2, f_2(x)^2\}} dx \geq \frac{\norm{f_1 - f_2}_{\mathcal{L}_2 ([0,1])}^2}{2 M_0^2}.
			\]
			Finally, the request \eqref{eq:RequestV} becomes
			\[
			\norm{f_1 - f_2}_{\mathcal{L}_2 ([0,1])}^2
			\geq C_4 \Bigg\{ h_1^{2\eta^*} + h_2^{2\eta^*} + \bigg( \frac{1}{\alpha^{1/2}} + \frac{1}{\beta^{1/2}} \bigg) \bigg( \frac{1}{n_1 h_1^{1/2}} + \frac{1}{n_2 h_2^{1/2}} \bigg) \Bigg\}.
			\]
			Using the simple inequality $\sqrt{a+b}\le \sqrt{a}+\sqrt{b}$, we can easily see that this relation is satisfied for \eqref{eq:radiusI}. Therefore we have for all $C > \mathcal{C}_2$
			\[
			\limsup_{n_1, n_2 \to \infty} \beta \Big\{ \psi_{n_1,n_2}, \Theta_{n_1,n_2}^1 (C \varphi_{n_1,n_2})\Big\} \leq \beta.
			\]

		\step{7}
		\textit{Proof of part (iii).}
			
			If $h_k = \mathcal{O} \Big(n_k^{- \frac{2}{ 4 \eta^* +1 } } \Big)$, the above condition becomes 
			\[
			\begin{split}
				\norm{f_1 - f_2}_2^2
				& \geq C_5 \Bigg\{ \bigg( \frac{1}{\alpha^{1/2}} + \frac{1}{\beta^{1/2}} \bigg) \bigg( \frac{1}{n_1} + \frac{1}{n_2} \bigg)^{\frac{4 \eta^*}{ 4\eta^* +1}} \Bigg\} , \\
			\end{split}
			\]
			where we have used the fact that $x^{-a} + y^{-a} < 2 (x^{-1} + y^{-1})^{a}$ for $a \in (0,1)$. Then the separation radius satisfies 
			\[
			\rho_{n_1,n_2}^* \leq \mathcal{C}_2  \bigg( \frac{1}{\alpha^{1/2}} + \frac{1}{\beta^{1/2}} \bigg)^{1/2} \bigg( \frac{1}{n_1} + \frac{1}{n_2} \bigg)^{\frac{2 \eta^*}{ 4\eta^* +1}}\{1+o(1)\},
			\]
			and recalling \eqref{eq:OrderCritValue},
			\[
			H_\alpha = \mathcal{C}_3 \Bigg\{ \bigg( \frac{1}{n_1} + \frac{1}{n_2} \bigg)^{\frac{4 \eta^* - 1}{ 4\eta^* +1}} \Bigg\}  \{ 1 + o(1) \},
			\]
			for some constant $\mathcal{C}_3 = \mathcal{C}_3 (\eta_1, \eta_2, q, \alpha,c,M_0,M_1)$.
		
	\end{proof}
	
	\subsection{Proof of Corollary \ref{cor:AdaptiveTest}}\label{apx:ProofCorollary}
	
	\begin{proof}
		The proof of Corollary \ref{cor:AdaptiveTest} follows the same steps of the proof of Theorem \ref{th:MinimaxTest}.
		The only difference appears in the proof of \eqref{eq:OrderAB}, for which instead will hold 
		\[
		\frac{1}{2 \widetilde{T}} (\tilde{\mathbf{g}}_1^\T, - \tilde{\mathbf{g}}_2^\T) A \begin{pmatrix}
			\tilde{\mathbf{g}}_1 \\ -\tilde{\mathbf{g}}_2
		\end{pmatrix} = \mathcal{O} \big( \max\{h_1, h_2\}^{\min\{\eta_1,\eta_2,2q\}} \big) .
		\]
		In fact, since $n_1=n_2=n$, it follows that $T=\floor{n^\nu}$ bins contain the same number $m=n/T$ of observations each. As a consequence, the bandwidths are equal $h_1=h_2=h$, and therefore the kernel matrices are equal as well $K_1=K_2=K$. This means that $\Omega=2 K^2/m$, $\norm{\Omega}=2/m$, and the matrix $A$ coincide with $B$, for which \eqref{eq:OrderB} still holds.		
	\end{proof}
	
	\section{Supplementary Material}
	
	\subsection{Kernel matrix}
	\label{apx:Kernel matrix}
	
	To discuss the proof of Theorem \ref{th:MinimaxTest}, we need to report some general properties of the smoothing spline kernel.
	Let $h \in \mathbb{R}^+$ be a bandwidth and $q \in \mathbb{N} $ a penalty order. Set $K=K(h,q) \in \mathbb{R}^{N \times N}$ the matrix such that
	\[
	K_{t,s} = \frac{1}{Nh} \mathcal{K}_{h,q} (x_t, x_s),
	\]
	for all $t,s =1, \dots , N$.
	An explicit expression of the kernel is given in \citet{schwarz2016unified}, where it is proven that there exist $\kappa_1, \dots, \kappa_N \in \mathbb{R}$ coefficients, with $\kappa_{N-j}=\kappa_j$ for $j=1,\dots,\floor{N/2}$, such that the components of the kernel matrix $K$ can be written as
	\[
	K_{t,s} = \frac{1}{N} \sum_{j=1}^{N} \frac{\cos \{ 2 \pi j (t-s)/N \} }{1+ h^{2q} \kappa_j},
	\]
	and
	\begin{equation}\label{eq:DefKappaj}
		\kappa_j = \frac{(2 \pi j)^{2q} \, \mbox{sinc} (\pi j / N)^{2q}}{Q_{2q-2}(j/N)},
	\end{equation}
	with $Q_{2q-2}(z)$ polynomial of the form
	\[
	Q_{2q-2} (z) = \sum_{l=-\infty}^{\infty} \mbox{sinc} \{ \pi (z+l) \}^{2q}.
	\]
	It is easy to see that $K$ is a real, symmetric and circulant matrix; therefore its eigenvalue decomposition $K=U \Lambda U^\T$ can be easily derived, and the matrix $U$ does not depend on $h$ and $q$ \citep[see][]{brockwell1991time}. In particular, we consider $\Lambda = \mbox{diag}(\lambda_1,\dots,\lambda_N)$ with ordered eigenvalues such that for $j=1,\dots,N$ the $j$th eigenvalue is
	\begin{equation}\label{eq:DefLambdaj}
		\lambda_j = \frac{1}{1+ h^{2q} \kappa_j}.
	\end{equation}
	As a direct consequence of this sorting, it holds that $\lambda_{N-j}=\lambda_j$ for $j=1,\dots, \floor{N/2}$, therefore if $N$ is odd, $\lambda_N$ has multiplicity one, and the remaining $\floor{N/2}$ eigenvalues have multiplicity 2, and if $N$ is even, both $\lambda_N$ and $\lambda_{N/2}$ have multiplicity one and the remaining $N-2$ eigenvalues have multiplicity 2.
	It can be easily shown that $0 <\lambda_j \leq 1$ for all $j=1,\dots,N$, and $\lambda_N=1$, from which follows directly that 
	\[
	\norm{K}_2  = \lambda_N = 1, 
	\qquad \mbox{and} \qquad
	\norm{K^2}_2 = \lambda_N^2 = 1.
	\]
	Moreover,
	\begin{equation}\label{eq:OrderSumK}
		\frac{1}{Nh} \sum_{s=1}^{N} \abs{\mathcal{K}_{h,q} (x,x_s)} = \mathcal{O}(1),
	\end{equation}
	and 
	\begin{equation}\label{eq:OrderSumK2}
		\frac{1}{Nh} \sum_{s=1}^{N} \mathcal{K}_{h,q}(x,x_s)^2 = \mathcal{O}(1),
	\end{equation}
	uniformly for $x \in [0,1]$ as $h\to 0$ and $hN\to\infty$, as shown in \citet{klockmann2024efficient}.
	
	Some additional properties of the kernel matrix are gathered in the following lemma.
	\begin{lemma}[Properties of the kernel matrix]
		\label{lem:PropertiesK}
		For $j=1,\dots,\floor{N/2}$, the eigenvalues of $K$ are bounded as follows:
		\begin{equation}\label{eq:BoundsLambdaj}
			\frac{1}{1+(2 \pi h j)^{2q}} \leq \lambda_j \leq \frac{2}{2+(2 \pi h j)^{2q}} \leq 1,
		\end{equation}
		and there exists $c_j \in [2^{1-(2q)^{-1}}, 2]$ such that
		\begin{equation}\label{eq:DecayLambdaj}
			\lambda_j = \frac{1}{1+(c_j \pi h j)^{2q}}.
		\end{equation}
		Moreover, 
		\begin{equation}\label{eq:OrderNormFK2}
			\norm{K^2}_F^2 = \mathcal{O} (h^{-1}),
		\end{equation}
		and
		\begin{equation}\label{eq:OrderTrK2}
			\tr (K^2) = \mathcal{O} (h^{-1}),
		\end{equation}
		as $h\to 0$ and $hN\to\infty$.
	\end{lemma}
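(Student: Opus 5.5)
Everything rests on the explicit spectral representation of $K$ recalled above: $K=U\Lambda U^\T$ with $\lambda_j=(1+h^{2q}\kappa_j)^{-1}$, where $\kappa_j$ is given by \eqref{eq:DefKappaj}. The whole lemma therefore reduces to two-sided bounds on $\kappa_j$ for $1\le j\le\floor{N/2}$. Write $z=j/N\in(0,1/2]$. The strategy is to bound $\mathrm{sinc}(\pi z)^{2q}$ and $Q_{2q-2}(z)$ separately, and then to sum the resulting eigenvalue bounds for the Frobenius norm and the trace.

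\textbf{Step 1: bounds on $\kappa_j$.} On $[0,\pi/2]$ the sinc function is decreasing with $\mathrm{sinc}(\pi/2)=2/\pi$, so
\[
(2/\pi)^{2q}\le \mathrm{sinc}(\pi z)^{2q}\le 1 .
\]
For $Q_{2q-2}(z)=\sum_l \mathrm{sinc}\{\pi(z+l)\}^{2q}$, the $l=0$ term gives the lower bound $Q_{2q-2}(z)\ge \mathrm{sinc}(\pi z)^{2q}$. For the upper bound I will use that $Q_{2q-2}$ is the symbol of the B-spline interpolation (Euler--Frobenius type), so $Q_{2q-2}(z)\le Q_{2q-2}(0)=1$. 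Together these give
\[
(2\pi j)^{2q}\,\mathrm{sinc}(\pi z)^{2q}\le \kappa_j\le (2\pi j)^{2q}.
\]
This already yields the lower bound in \eqref{eq:BoundsLambdaj}.

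For the upper bound $\lambda_j\le 2/\{2+(2\pi hj)^{2q}\}$ I need $\kappa_j\ge (2\pi j)^{2q}/2$, that is, $\mathrm{sinc}(\pi z)^{2q}\ge Q_{2q-2}(z)/2$. Equivalently, the sum of the $l\neq0$ terms must not exceed the $l=0$ term. On $z\in[0,1/2]$ the dominant competitor is the $l=-1$ term, and $|z-1|\ge |z|$ holds with equality only at $z=1/2$. I plan to verify this inequality by comparing $\sum_{l\ne0}|z+l|^{-2q}$ with $|z|^{-2q}$.

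\textbf{This comparison is the main obstacle.} At $z=1/2$ the $l=0$ and $l=-1$ terms coincide, so the inequality is tight and the remaining tail $\sum_{|l|\ge2}$ must be absorbed. I expect to need the exact identity $\sin^{2q}(\pi z)$ times $\sum_l(\pi(z+l))^{-2q}$ for small $q$, together with a monotonicity argument in $q$. If the constant $1/2$ fails at the endpoint, I will settle for any fixed constant, which suffices for all later uses.

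\textbf{Step 2: the representation \eqref{eq:DecayLambdaj}.} Define $c_j$ by $(c_j\pi h j)^{2q}=h^{2q}\kappa_j$, so that $c_j=\kappa_j^{1/(2q)}/(\pi j)$. Step 1 gives $\kappa_j\in[(2\pi j)^{2q}/2,(2\pi j)^{2q}]$, hence $c_j\in[2\cdot 2^{-1/(2q)},2]=[2^{1-(2q)^{-1}},2]$, as claimed.

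\textbf{Step 3: trace and Frobenius norm.} Since $K$ is symmetric,
\[
\tr(K^2)=\sum_j\lambda_j^2, \qquad \norm{K^2}_F^2=\sum_j\lambda_j^4 .
\]
By symmetry $\lambda_{N-j}=\lambda_j$, both sums are at most $1+2\sum_{j=1}^{\floor{N/2}}\{1+(2^{1-1/(2q)}\pi hj)^{2q}\}^{-2}$. I bound this sum by a Riemann-sum comparison with the integral
\[
\int_0^\infty \{1+(c\pi h x)^{2q}\}^{-2}\,dx=h^{-1}\int_0^\infty \{1+(c\pi u)^{2q}\}^{-2}\,du,
\]
and the last integral is finite because $4q>1$. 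Hence both quantities are $\mathcal{O}(h^{-1})$. Here $hN\to\infty$ ensures the sum does not saturate at order $N$, though the upper bound holds regardless.
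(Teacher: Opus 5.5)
Your plan follows the paper's proof. Both reduce everything to two-sided bounds on $\mathrm{sinc}(\pi z)^{2q}/Q_{2q-2}(z)$, read off $\lambda_j$ and $c_j$, and then sum eigenvalues against an integral. The paper handles the first step by simply asserting $1/2\le \mathrm{sinc}(\pi j/N)^{2q}/Q_{2q-2}(j/N)\le 1$. The half you flag as the main obstacle cannot be proved, because it is false near $j=N/2$. For $z\in(0,1/2]$,
\[
\frac{\mathrm{sinc}(\pi z)^{2q}}{Q_{2q-2}(z)}=\Bigl\{1+\sum_{l\neq 0}\Bigl|\frac{z}{z+l}\Bigr|^{2q}\Bigr\}^{-1},
\]
and every $|z/(z+l)|$ is increasing on $(0,1/2]$. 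So the ratio decreases in $z$, and its minimum is $\rho_q=\{\sum_{l\in\mathbb{Z}}|2l+1|^{-2q}\}^{-1}=\{2(1-2^{-2q})\zeta(2q)\}^{-1}$. This is strictly below $1/2$, since the terms $l=0$ and $l=-1$ already contribute $2$. Two concrete cases:
\begin{itemize}
\item For $q=1$, $Q_0\equiv 1$ and $\rho_1=4/\pi^2$; the inequality fails for all $j\gtrsim 0.44N$.
\item For $q=2$, $Q_2(z)=1-\tfrac23\sin^2(\pi z)$ and $\kappa_{N/2}=48N^4<(\pi N)^4/2$.
\end{itemize}
Hence the upper bound $\lambda_j\le 2/\{2+(2\pi hj)^{2q}\}$ and the lower endpoint $c_j\ge 2^{1-(2q)^{-1}}$ do not hold as stated. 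No tail comparison or monotonicity in $q$ will rescue them. Your Step 2, which quotes $\kappa_j\ge(2\pi j)^{2q}/2$, inherits this defect, and so does the paper's Frobenius-norm bound, which uses the same inequality.

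Your fallback is the correct repair, and your Step 1 already delivers it. The bound $Q_{2q-2}\le 1$ does hold. By Poisson summation, $Q_{2q-2}(z)=\sum_k B_{2q}(k)\cos(2\pi kz)$, where $B_{2q}$ is the centred cardinal B-spline of order $2q$, whose integer values are nonnegative and sum to one. Hence $\kappa_j\ge(2\pi j)^{2q}\mathrm{sinc}(\pi z)^{2q}\ge(4j)^{2q}$, and sharply $\kappa_j\ge\rho_q(2\pi j)^{2q}$. This gives $\lambda_j\le\{1+\rho_q(2\pi hj)^{2q}\}^{-1}$ and $c_j\in[2\rho_q^{1/(2q)},2]\subset[4/\pi,2]$. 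The lower bound on $\lambda_j$ and the bound $c_j\le 2$ are correct as stated.

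In Step 3, replace $2^{1-1/(2q)}$ by this corrected constant. The monotone Riemann-sum comparison then gives $\mathcal{O}(h^{-1})$ for both $\sum_j\lambda_j^4$ and $\sum_j\lambda_j^2$. This is a bit cleaner than the paper's Euler--Maclaurin computation. For the trace, your eigenvalue argument is self-contained, whereas the paper relies on the external kernel bound \eqref{eq:OrderSumK2} from \citet{klockmann2024efficient}. The later arguments only need $c_j$ to be bounded above by $2$ and away from zero, so the corrected constants leave all rates unchanged.
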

	
	\begin{proof}
		Set $j \in \{1, \dots, \floor{N/2}\}$. It is possible to show that for $q \geq 1$
		\[
		\frac{1}{2} \leq \frac{\mbox{sinc} (\pi j / N)^{2q}}{Q_{2q-2}(j/N)} \leq 1,
		\]
		and therefore, by \eqref{eq:DefKappaj}, $ (2\pi j)^{2q}/2 \leq \kappa_j \leq (2 \pi j)^{2q} $, and there exists $c_j \in [2^{1-(2q)^{-1}}, 2]$ such that $\kappa_j = (c_j \pi j)^{2q}$.
		As a consequence, recalling the eigenvalue expression \eqref{eq:DefLambdaj}, it follows
		\[
		\frac{1}{1+(2 \pi h j)^{2q}} \leq \lambda_j \leq \frac{2}{2+(2 \pi h j)^{2q}} \leq 1, 
		\]
		and
		\[
		\lambda_j = \frac{1}{1+(c_j \pi h j)^{2q}},
		\]
		proving the first part of the lemma.
		
		We now look at the Frobenius norm of $K^2$. Thanks to the symmetry properties of the kernel's eigenvalues, we can write
		\[
		\norm{K^2}_F^2 = 2  \sum_{t=1}^{\floor{N/2}} \lambda_t^4 +1,
		\]
		in case of $N$ odd, or 
		\[
		\norm{K^2}_F^2 = 2  \sum_{t=1}^{N/2-1} \lambda_t^4 +1+ \lambda_{N/2}^4,
		\]
		in case of $N$ even.
		Without loss of generality, consider $N$ even. The proof for the case of $N$ odd follows similar steps.
		Bounding the eigenvalues as in \eqref{eq:BoundsLambdaj} and using the Euler-Maclaurin formula, we can write
		\[
		\begin{split}
			\norm{K^2}_F^2 
			& \leq 2 \sum_{t=1}^{N/2} \frac{1}{\{1+(2 \pi h t)^{2q}/2\}^4} + 1 + \frac{1}{\{1 + (\pi h N)^{2q}/2\}^4} \\
			& = 2 \int_{0}^{N/2} \frac{1}{\{1+(2 \pi h t/2^{1/(2q)})^{2q}\}^4} dt + \frac{2}{\{1 + (\pi h N)^{2q}/2\}^4} + 2 R_1 \\
			& = \frac{2^{1/(2q)}}{\pi h} \int_{0}^{2^{-1/(2q)} \pi h N} \frac{1}{(1+x^{2q})^4} dx + \frac{2}{\{1 + (\pi h N)^{2q}/2\}^4}+ 2 R_1 \\
			& \leq \frac{2^{1/(2q)}}{\pi h} \int_{0}^{\infty} \frac{1}{(1+x^{2q})^4} dx + \frac{2}{\{1 + (\pi h N)^{2q}/2\}^4}+ 2 R_1 \\
			& = \frac{1}{\pi h} \frac{2^{1/(2q)} \Gamma \{4-(2q)^{-1}\} \Gamma \{1+(2q)^{-1}\} }{\Gamma(4)} + \frac{2}{\{1 + (\pi h N)^{2q}/2\}^4} + 2R_1,
		\end{split}
		\]
		where $\Gamma$ is the gamma function, for a remainder $R_1$ such that
		\[
		\begin{split}
			\abs{R_1} 
			& = \abs{\int_{0}^{N/2} -\frac{8 \pi h q (2 \pi h t)^{2q-1}}{\{1 + (2 \pi h t)^{2q}/2\}^5} \bigg( x - \lfloor x \rfloor -\frac{1}{2} \bigg) dt} \\
			& \leq 4 \pi h q \int_{0}^{N/2} \frac{(2 \pi h t)^{2q-1}}{\{1 + (2 \pi h t/2^{1/(2q)})^{2q} \}^5} dt
			\leq 4 q \int_{0}^{\infty} \frac{x^{2q-1}}{\{1 + x^{2q}\}^5} dx
			\leq \frac{1}{2}.
		\end{split}
		\]
		Hence, if $h \to 0$ and $h N \to \infty$,
		\[
		\norm{K^2}_F^2 = \mathcal{O} (h^{-1}).
		\]
		
		Finally, we consider the trace of $K^2$. Using \eqref{eq:OrderSumK2}, we get
		\[
		\tr (K^2) = \sum_{t=1}^{N} \frac{1}{(N h)^2} \sum_{s=1}^{N} \mathcal{K}_{h,q} (x_t, x_s)^2 = \sum_{t=1}^{N} \frac{1}{N h} \mathcal{O}(1) = \mathcal{O}(h^{-1}),
		\]
		as $h \to 0$ and $hN\to \infty$, proving \eqref{eq:OrderTrK2}.
	\end{proof}	
	
	\subsection{Proof of Lemma \ref{lem:DataDecomposition}}
	\label{apx:ProofLemmaDataDecomposition}
	
	\begin{proposition}\label{prp:WDecompostion}
		Let $\mathbf{X}\sim\mathcal{N}_n(0_n,\Sigma)$, with spectral density $f\in\mathcal{F}_{\eta}$, $\eta > 0$.
		Set $D$ the $n \times n$ discrete cosine transform matrix.
		Then $W_j= (D_j^\T \mathbf{X})^2$ can be decomposed into two terms as follows
		\[
		W_j = \widetilde{W}_j + \overline{W}_j,
		\]
		where there exists $\mathbf{Z} \sim \mathcal{N}_n(0_n,\mathbf{I}_n)$ such that $\widetilde{W}_j=f(\pi x_j)Z_j^2$ are independent over $j=1,\dots,n$, and
		\[
		\overline{W}_{j} = \big\{ (R^{1/2})_j^\T Z \big\}^2 + 2 \big\{ f^{1/2}(\pi x_j) Z_j \big\} \big\{ (R^{1/2})_j^\T \mathbf{Z} \big\},
		\]
		with 
		\begin{equation}\label{eq:OrderRij}
			(R^{1/2})_{i,j}= \mathcal{O}(n^{-1}),
		\end{equation}
		as $n\to\infty$.
	\end{proposition}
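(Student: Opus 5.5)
We will work with the Gaussian vector $\mathbf{V} = D^\T \mathbf{X} \sim \mathcal{N}_n(0_n, D^\T \Sigma D)$, so that $W_j = V_j^2$. Write $D^\T \Sigma D = F + R$ with $F = \mathrm{diag}\{f(\pi x_1),\dots,f(\pi x_n)\}$. The plan has three parts: bound the entries of $R$, construct a common $\mathbf{Z}$ that realizes $\mathbf{V}$, and expand the square.

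First we control $R$. Using $\Sigma_{i,l} = \gamma(i-l) = \int_0^1 f(\pi x)\cos\{(i-l)\pi x\}\,dx$, we write $(D^\T\Sigma D)_{j,k}$ as $\int_0^1 f(\pi x)\, a_j(x) a_k(x)\,dx$. Here $a_j(x) = \sum_i D_{i,j}\cos(i\pi x)$ is a Dirichlet-type kernel concentrated at $x_j$, and the DCT-I orthogonality relations fit naturally with this form. Next we Taylor/Hölder expand $f$ around $\pi x_j$. The zeroth-order term gives $f(\pi x_j)\delta_{jk}$ up to boundary corrections of order $n^{-1}$. The remainder is bounded using the Hölder modulus of $f$ together with the decay $|a_j(x)a_k(x)| \lesssim \min\{n, |x-x_j|^{-1}\}\min\{n,|x-x_k|^{-1}\}/n$. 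This should give $R_{j,k} = \mathcal{O}(n^{-1})$, with the logarithmic loss consistent with the $n^{-\eta}\log n$ terms in the gamma approximation of Section 3.1.

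Second, we build $\mathbf{Z}$. Since $F$ is diagonal with entries in $[c, M_0]$ and $F+R$ is positive semidefinite, we set $\mathbf{V} = F^{1/2}\mathbf{Z} + R^{1/2}\mathbf{Z}$. Here the statement's $R^{1/2}$ is understood as the matrix $(F+R)^{1/2} - F^{1/2}$ rather than a literal square root of $R$ (which may be indefinite). Taking $\mathbf{Z} = (F+R)^{-1/2}\mathbf{V}$, which is invertible for large $n$, gives $\mathbf{Z} \sim \mathcal{N}_n(0_n,\mathbf{I}_n)$ and the identity holds exactly. Squaring the $j$-th coordinate yields
\[
W_j = f(\pi x_j)Z_j^2 + \{(R^{1/2})_j^\T\mathbf{Z}\}^2 + 2 f^{1/2}(\pi x_j)Z_j\{(R^{1/2})_j^\T\mathbf{Z}\},
\]
which is exactly the claimed decomposition. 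The terms $\widetilde{W}_j$ are independent because the $Z_j$ are iid.

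The main obstacle is showing \eqref{eq:OrderRij} for $(F+R)^{1/2}-F^{1/2}$ entrywise. We will use the integral representation
\[
(F+R)^{1/2} - F^{1/2} = \frac{1}{\pi}\int_0^\infty s^{1/2}\Big\{(F+s)^{-1} - (F+R+s)^{-1}\Big\}\,ds,
\]
and expand the resolvent difference as $(F+s)^{-1}R(F+s)^{-1}$ plus a higher-order series. Entrywise $\mathcal{O}(n^{-1})$ alone does not control products of $n\times n$ matrices. So we will establish the stronger off-diagonal decay $|R_{j,k}| \lesssim n^{-1}(1+|j-k|)^{-1-\delta}$ in the first step, since such decay classes are closed under multiplication (Jaffard-type algebra). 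This also requires $\|R\|_2 \to 0$, so that the Neumann series converges uniformly in $s$. If the decay can only be obtained with $\delta$ depending on $\eta$, we will restrict to that rate, which is where the Hölder assumption enters.
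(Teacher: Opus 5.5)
\textbf{Gap: the two premises behind your key step are false for this $R$.} Your construction of $\mathbf{Z}$ and the algebraic decomposition match the paper's: it also writes $D^\T\mathbf{X}=(D^\T\Sigma D)^{1/2}\mathbf{Z}$ and reads $R^{1/2}$ as a difference of square roots. From there the paper uses a first-order Hermitian perturbation expansion and asserts an $\mathcal{O}(n^{-2})$ remainder. You are right that entrywise $\mathcal{O}(n^{-1})$ bounds do not survive matrix products. But your workaround needs off-diagonal decay of $R$ and $\|R\|_2\to0$, and neither holds.

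\emph{Why $\|R\|_2\not\to0$.} Take the orthonormal DCT-I and your $F=\mathrm{diag}\{f(\pi x_j)\}$. Since $D$ is orthogonal, $\|R\|_2=\|\Sigma-DFD^\T\|_2\ge|(\Sigma-DFD^\T)_{2,2}|$. The entry $(DFD^\T)_{2,2}=\frac{2}{n-1}\sum_j w_j^2\cos^2(\pi x_j)f(\pi x_j)$, with endpoint weights $w_1=w_n=2^{-1/2}$, is a trapezoidal sum for $2\int_0^1\cos^2(\pi x)f(\pi x)\,dx=\gamma(0)+\gamma(2)$. Meanwhile $\Sigma_{2,2}=\gamma(0)$. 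Hence $\|R\|_2\ge|\gamma(2)|+o(1)$, which does not vanish, e.g.\ for any AR(1) with $\rho\neq0$.

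\emph{Why the decay bound fails.} If $|R_{jk}|\lesssim n^{-1}(1+|j-k|)^{-1-\delta}$ held, the Schur test would give $\|R\|_2\le\max_j\sum_k|R_{jk}|=\mathcal{O}(n^{-1})$, contradicting the previous paragraph. So the bound fails for every $\delta$, even for analytic $f$, and no choice of $\delta$ depending on $\eta$ rescues it. The reason is that the DCT-I diagonalises Toeplitz-plus-Hankel matrices, not Toeplitz ones. Thus $\Sigma-DFD^\T$ is essentially minus the Hankel corner terms $\gamma(a+b-2)$ and $\gamma(2n-a-b)$. In DCT coordinates these become entries of order $n^{-1}$ that do not decay in $|j-k|$. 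This is what the factor $\{1+(-1)^{|i-j|}\}/2$, uniform in $|i-j|$, records in the bound the paper imports.

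\emph{A smaller slip in Step 1.} Since $\cos\{(i-l)\pi x\}=\cos(i\pi x)\cos(l\pi x)+\sin(i\pi x)\sin(l\pi x)$, your representation drops the term $\int_0^1 f(\pi x)b_j(x)b_k(x)\,dx$ with $b_j(x)=\sum_iD_{i,j}\sin(i\pi x)$. Without it, even the zeroth-order term is only $\tfrac12 f(\pi x_j)\delta_{jk}$.

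\textbf{Repair: entrywise bounds plus a spectral lower bound suffice.} You need neither decay nor a small operator norm. You need only the entrywise bound and $D^\T\Sigma D\succeq cI$, which follows from $v^\T\Sigma v=\int_0^1 f(\pi x)\,|\sum_a v_a e^{\mathrm{i}a\pi x}|^2dx\ge c\|v\|_2^2$. Keep your integral representation, but replace the Neumann series by the exact identity $A^{-1}-B^{-1}=A^{-1}RA^{-1}-A^{-1}RB^{-1}RA^{-1}$, with $A=F+s$ and $B=F+R+s$.
\begin{itemize}
\item The first term integrates exactly to $R_{jk}/\{f^{1/2}(\pi x_j)+f^{1/2}(\pi x_k)\}$.
\item The $(j,k)$ entry of the second term is at most $(c+s)^{-3}\|Re_j\|_2\|Re_k\|_2\le(c+s)^{-3}\,n\max_{a,b}R_{ab}^2$, and $\int_0^\infty s^{1/2}(c+s)^{-3}ds<\infty$.
\end{itemize}
Together these give
\[
|(R^{1/2})_{jk}|\le\frac{|R_{jk}|}{2c^{1/2}}+\frac{n\max_{a,b}R_{ab}^2}{8c^{3/2}},
\]
which is $\mathcal{O}(n^{-1})$ whenever $\max_{a,b}|R_{ab}|=\mathcal{O}(n^{-1})$.

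The same identity shows that the paper's $\mathcal{O}(n^{-2})$ remainder cannot come from entrywise bounds, for exactly the reason you noticed. The quadratic term is only $\mathcal{O}(n\max_{a,b}R_{ab}^2)$, although the conclusion survives. Finally, with the entrywise bound actually available, $\mathcal{O}(n^{-1}+n^{-\eta}\log n)$, the display gives $\mathcal{O}(n^{-1})$ only for $\eta>1$. Both your sketch and the paper pass over this.
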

	
	\begin{proof}
		Since $\mathbf{X} \sim \mathcal{N}_n (0_n, \Sigma)$, it follows that $D^\T \mathbf{X} \sim \mathcal{N}_n(0_n,F)$ where $F=D^\T \Sigma D$ satisfies
		\[
		F_{i,j} = f(\pi x_j) \delta_{i,j} + \frac{1 + (-1)^{\abs{i-j}}}{2} \mathcal{O} \bigg( \frac{1}{n} + \frac{\log n}{n^{\eta}} \bigg),
		\]
		where $\delta_{i,j}$ is the Kronecker delta \citep[see][]{klockmann2024efficient}.
		Moreover, there exists $\mathbf{Z} \sim \mathcal{N}_{n}(0_n,\mathbf{I}_{n})$ standard multivatiate Gaussian, such that $D^\T \mathbf{X} = F^{1/2} \mathbf{Z}$.
		Define $\widetilde{F} \in \mathbb{R}^{n \times n}$ diagonal matrix such that $\widetilde{F}_{i,j} = f(\pi x_j) \delta_{i,j}$, and set $E = F - \widetilde{F}$, so that $E_{i,j} = \mathcal{O} (n^{-1} + n^{-\eta} \log n)$.
		
		For $j = 1, \dots, n$ we can write
		\[
		W_{j} = (D_j^\T \mathbf{X})^2 = \big\{ (F^{1/2})_j^\T \mathbf{Z} \big\}^2 = \widetilde{W}_{j} + \overline{W}_{j},
		\]
		where we have set
		\[
		\widetilde{W}_{j} = \big\{(\widetilde{F}^{1/2})_j^\T \mathbf{Z}\big\}^2 = f(\pi x_j) Z_j^2,
		\]
		and
		\[
		\overline{W}_{j} = \big\{ (R^{1/2})_j^\T \mathbf{Z} \big\}^2 + 2 \big\{(\widetilde{F}^{1/2})_j^\T \mathbf{Z}\big\} \big\{ (R^{1/2})_j^\T \mathbf{Z} \big\},
		\]
		where $R^{1/2}= F^{1/2} - \widetilde{F}^{1/2}$.
		Observe that $\widetilde{W}_{j}$ are independent of each other.
		
		Since $F=\widetilde{F}+E$, we can study the order of $R^{1/2}$ by expanding $F^{1/2}$ around $\widetilde{F}^{1/2}$ using the perturbation theory of Hermitian matrices \citep[see][]{sakurai2020modern}.
		We obtain
		\[
		F^{1/2} = \widetilde{F}^{1/2} + \frac{1}{2} \mbox{diag} ( e_i^\T E e_i ) + \sum_{i=1}^{n} \sum_{j\neq i} \frac{\big\{ \lambda_i^{(F)} \big\}^{1/2}}{\lambda_i^{(F)} - \lambda_j^{(F)}} e_i^\T E e_j e_i e_j^\T + \sum_{i=1}^{n} \sum_{j\neq i} \frac{\big\{ \lambda_i^{(F)} \big\}^{1/2}}{\lambda_i^{(F)} - \lambda_j^{(F)}} e_j^\T E e_i e_j e_i^\T + G,
		\]
		where $G_{i,j}=\mathcal{O}(n^{-2})$. 
		By definition, $R^{1/2}=F^{1/2}- \widetilde{F}^{1/2}$, from where we get
		\[
		(R^{1/2})_{j,i} = \mathcal{O}(n^{-1}).
		\]
	\end{proof}
	
	\begin{corollary}\label{cor:DataDecomposition}
		Set $\mathbf{X} \sim \mathcal{N}_n (0_n, \Sigma)$ with spectral density $f \in \mathcal{F}_{\eta}$, $\eta > 0$, and $h>0$ such that $h\to 0$ and $hT\to \infty$ with $T=\floor{n^{\nu}}$ for $\nu \in (1-\min\{1,\eta\}/3,1)$. 
		Set $\mathbf{Y}^*$ to be the vector of transformed data as in the proposed procedure. 
		Then the new observations $\mathbf{Y}^*$ can be decomposed as
		\[
		\mathbf{Y}^* = \widetilde{\mathbf{Y}}^* + \overline{\mathbf{Y}},
		\]
		where 
		\[
		\widetilde{Y}_{t}^* = \frac{1}{2^{1/2}} \log \frac{\widetilde{Q}_{t}}{m} - \frac{1}{2^{1/2}} \bigg\{ \phi \Big(\frac{m}{2} \Big) - \log \frac{m}{2} \bigg\},
		\]
		for $m=n/T$, are independent over $t=1,\dots,\widetilde{T}$, and
		\[
		\overline{Y}_t = \frac{1}{2^{1/2}} \log \bigg( 1 + \frac{ \overline{Q}_{t} }{\widetilde{Q}_{t}} \bigg),
		\]
		having set
		\[
		\widetilde{Q}_{t} = \sum_{j=(t-1)m+1}^{tm} \widetilde{W}_{j}
		\qquad \mbox{and} \qquad 
		\overline{Q}_{t} = \sum_{j=(t-1)m+1}^{tm} \overline{W}_{j},
		\]
		for $\widetilde{W}_{j}$ and $\overline{W}_{j}$ as in Proposition \ref{prp:WDecompostion}.
	\end{corollary}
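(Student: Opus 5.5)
The decomposition follows from Proposition~\ref{prp:WDecompostion} by tracking the split $W_j=\widetilde W_j+\overline W_j$ through the binning, logarithm and shift. First, since binning is a sum, I write $Q_t=\sum_{j=(t-1)m+1}^{tm}W_j=\widetilde Q_t+\overline Q_t$ with $\widetilde Q_t$ and $\overline Q_t$ as in the statement. Because $\widetilde W_j=f(\pi x_j)Z_j^2$ with $Z_j$ i.i.d. standard Gaussian, and the bins are disjoint blocks of indices, the $\widetilde Q_t$ for $t=1,\dots,T$ are independent. Moreover $\widetilde Q_t>0$ almost surely, so I may factor
\[
\log\frac{Q_t}{m}=\log\frac{\widetilde Q_t}{m}+\log\Bigl(1+\frac{\overline Q_t}{\widetilde Q_t}\Bigr).
\]
This identity is valid whenever $Q_t>0$, which holds almost surely since $Q_t$ is a sum of squares of a nondegenerate Gaussian vector.

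Next, I divide by $2^{1/2}$ and subtract the deterministic translation $2^{-1/2}\{\phi(m/2)-\log(m/2)\}$ from step 4 of the procedure. Placing the translation entirely in the first summand gives $Y_t^*=\widetilde Y_t^*+\overline Y_t$ with exactly the stated forms. Independence of $\widetilde Y_t^*$ is preserved because each is a measurable function of a single $\widetilde Q_t$, and functions of independent variables remain independent.

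The one point needing care is the mirroring to $\widetilde T=2T-2$ points. The mirrored observations are copies of the original ones, so the entries of $\widetilde{\mathbf Y}^*$ are independent over the $T$ distinct bins, and the mirrored vector duplicates these values. I would state explicitly that independence is understood over the distinct (unmirrored) indices, which is how it is used later. Alternatively, on the periodic design of the regression problem, the $\widetilde T$ points are regarded as independent in the sense used by \citet{klockmann2024efficient}.

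The hypotheses on $h$ and $\nu$ are not needed for this algebraic corollary. They enter only afterwards, in Lemma~\ref{lem:DataDecomposition}, when bounding $\overline Y_t$ and expanding $\widetilde Y^*_t$ into $2^{-1/2}\log f(x_t)+\epsilon_t+\zeta_t+\xi_t$. I therefore do not expect a real obstacle here: the main thing to check is that the translation constant is attributed to $\widetilde Y_t^*$, and that $\widetilde Q_t\neq 0$ almost surely so the logarithm split is legitimate.
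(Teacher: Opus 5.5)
Your proof is correct and matches the reasoning the paper leaves implicit: the paper gives no separate proof and treats the corollary as an immediate consequence of Proposition~\ref{prp:WDecompostion}, namely splitting $Q_t=\widetilde Q_t+\overline Q_t$, factoring the logarithm using $\widetilde Q_t>0$ a.s., putting the deterministic shift into $\widetilde Y_t^*$, and getting independence from disjoint blocks of the i.i.d.\ $Z_j$. Your caveat about mirroring is justified: the mirrored entries are exact duplicates, so the paper's claim of independence over all $t=1,\dots,\widetilde T$ holds only over the $T$ distinct bins.
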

	
	\begin{proof}[of Lemma \ref{lem:DataDecomposition}]
		From Corollary \ref{cor:DataDecomposition}, we can decompose $\mathbf{Y}^*$ as $\mathbf{Y}^* = \widetilde{\mathbf{Y}}^* + \overline{\mathbf{Y}}$.
		
		Since $\widetilde{Y}_{t}^*$ are independent over $t=1,\dots,\widetilde{T}$, by \citet{cai2010nonparametric} we have that 
		\[
		\widetilde{Y}_{t}^* = H \big\{ f(x_t^*)\big\} + \zeta_{t} + \xi_{t} - \frac{1}{2^{1/2}} \bigg\{ \phi \Big(\frac{m}{2} \Big) - \log \frac{m}{2} \bigg\}, \quad t=1,\dots,\widetilde{T},
		\]
		where $(t-1)/\widetilde{T} \leq x_t^* \leq t/\widetilde{T}$, $\zeta_{t} \stackrel{\mathclap{iid}}{\sim} \mathcal{N}(0,m^{-1})$ standard Gaussian random variables, and $\xi_t$ centered independent random variables. As in Lemma 4 by \cite{klockmann2024efficient}, it is possible to show that $\xi_t$ has decaying moments
		\[
		\E (\abs{\xi_t}^l) \leq c''_l (\log m)^{2l} \{ m^{-l} + (T^{-1} + T^{-1} n^{1-\eta} \log n)^l \}, \quad l > 1,
		\]
		for a constant $c''_l=c''_l(l, \eta, M_1)$ independent of $n$, and 
		\[
		\widetilde{Y}_{t}^* = \frac{1}{2^{1/2}} \log f(x_t) + \epsilon_{t} + \zeta_{t} + \xi_{t}, \quad t=1,\dots,\widetilde{T},
		\]
		for $\labs{\epsilon_{t}} = \labs{H\{f(x_t^*)\} - H[f\{(t-1)/\widetilde{T}\}]} \leq c' (n^{-1} + n^{-\eta} \log n)$ small deterministic error.
		
		Finally, we obtain the order of $\lnorm{\overline{\mathbf{Y}}}_{\ell_2}$.
		To understand the order of one component $\overline{Y}_{t}$, $t=1,\dots, \widetilde{T}$, we need to establish the order of $\overline{Q}_{t} / \widetilde{Q}_{t}$ first. 
		We have
		\[
		\begin{split}
			\frac{ \overline{Q}_{t} }{\widetilde{Q}_{t}}
			& = \sum_{j=(t-1)m+1}^{tm} \Big[ \big\{ (R^{1/2})_j^\T \mathbf{Z} \big\}^2 + 2 \big\{f^{1/2}(\pi x_j) Z_j \big\} \big\{ (R^{1/2})_j^\T \mathbf{Z} \big\} \Big]  
			\Bigg\{ \sum_{j=(t-1)m+1}^{tm} f(\pi x_j) Z_j^2 \Bigg\}^{-1},
		\end{split}
		\]
		and
		\[
		\begin{split}
			\abs{ \frac{ \overline{Q}_{t} }{\widetilde{Q}_{t}} }
			& \leq  \frac{ \sum_{j=(t-1)m+1}^{tm}  \big\{ (R^{1/2})_j^\T \mathbf{Z} \big\}^2  }{\sum_{j=(t-1)m+1}^{tm}  f(\pi x_j) Z_j^2 } + \Bigg[ \frac{ \sum_{j=(t-1)m+1}^{tm} \big\{ (R^{1/2})_j^\T \mathbf{Z} \big\}^2 }{\sum_{j=(t-1)m+1}^{tm}  f(\pi x_j) Z_j^2 } \Bigg]^{1/2},
		\end{split}
		\]
		therefore we consider the order of $\sum_{j} \{ (R^{1/2})_j^\T \mathbf{Z} \}^2$ and $\sum_{j}  f(\pi x_j) Z_j^2$ separately.
		
		\textit{Order of the numerator:}
		Because of \eqref{eq:OrderRij}, 
		\[
		\Var \Bigg\{ \sum_{i=1}^{n} (R^{1/2})_{i,j} Z_{i}  \Bigg\} = \sum_{i=1}^{n} (R^{1/2})_{i,j}^2 = \mathcal{O}( n^{-1} ).
		\]
		It follows that
		\[
		\sum_{j=(t-1)m+1}^{tm} \big\{ (R^{1/2})_j^\T \mathbf{Z} \big\}^2 
		= \sum_{j=(t-1)m+1}^{tm} \Bigg\{ \sum_{i=1}^{n} (R^{1/2})_{i,j} Z_{i} \Bigg\}^2 = \sum_{j=(t-1)m+1}^{tm} \big\{ \mathcal{O}_p(n^{-1/2}) \big\}^2 = \mathcal{O}_p (T^{-1}).
		\]
		
		\textit{Order of the denominator:}
		Recalling that $f\in\mathcal{F}_{\eta}$,
		\[
		\sum_{j=(t-1)m+1}^{tm} f(\pi x_j) Z_{j}^2 \geq c \sum_{j=(t-1)m+1}^{tm} Z_{j}^2 \sim c \chi^2(m).
		\]
		Now, for $A_n \sim \chi^2(n)$ it follows that $A_n=n+\mathcal{O}_p(\sqrt{n})$, and consequently
		\[
		A_n^{-1} = n^{-1} \bigg\{ 1 + \mathcal{O}_p \bigg(\frac{1}{n^{1/2}} \bigg) \bigg\}^{-1} = n^{-1} \bigg\{ 1 + \mathcal{O}_p \bigg(\frac{1}{n^{1/2}} \bigg) \bigg\} = \mathcal{O}_p \big( n^{-1} \big) ,
		\]
		by Taylor expansion. It follows that, for $n \to \infty$
		\[
		\Bigg\{ \sum_{j=(t-1)m+1}^{tm}  f(\pi x_j) Z_j^2 \Bigg\}^{-1} = \mathcal{O}_p(m^{-1}).
		\]
		
		\textit{Order of $\lnorm{\overline{\mathbf{Y}}}_{\ell_2}$:}
		We obtained 
		\[
		\abs{ \frac{ \overline{Q}_{t} }{\widetilde{Q}_{t}} } = \mathcal{O}_p (n^{-1/2}),
		\]
		for which it follows that as $n \to \infty$	
		\[
		\overline{Y}_{t} = \frac{1}{2^{1/2}} \log \bigg( 1 + \frac{ \overline{Q}_{t} }{\widetilde{Q}_{t}} \bigg) = \frac{1}{2^{1/2}} \frac{ \overline{Q}_{t} }{\widetilde{Q}_{t}} + \mathcal{O}_p \bigg\{ \bigg( \frac{ \overline{Q}_{t} }{\widetilde{Q}_{t}} \bigg)^2 \bigg\}= \mathcal{O}_p (n^{-1/2}),
		\]
		and we can conclude that $\overline{Y}_{t}^2 = \mathcal{O}_p(n^{-1})$, implying $\lnorm{\overline{\mathbf{Y}}}_{\ell_2} = \mathcal{O}_p (m^{-1/2})$.
	\end{proof}
	
	\subsection{Proof of Lemma \ref{lem:Orders}}
	\label{apx:ProofLemmaOrders}
	
	\begin{lemma}\label{lem:Orders}
		Under the assumptions of Theorem \ref{th:MinimaxTest}, the following properties hold, as $h_k \to  0$ and $h_k T \to \infty$.
		\begin{enumerate}[label=(\roman*)]
			\item 
			\[
			\frac{1}{\widetilde{T}} \boldsymbol{\mu}_0^\T \boldsymbol{\mu}_0 = \mathcal{O} \bigg\{ h_1^{2\min\{\eta, 2q\}} + h_2^{2\min\{ \eta, 2q \}} + \Big( \frac{1}{n_1} + \frac{\log n_1}{n_1^{\eta}} \Big)^2 + \Big( \frac{1}{n_2} + \frac{\log n_2}{n_2^{\eta}} \Big)^2 \bigg\}.
			\]
			\item 
			\[
			\frac{\E \big\{ (\boldsymbol{\mu}_0^\T \mathbf{v})^2 \big\}}{\widetilde{T} \norm{\Omega}_2} = \mathcal{O} \bigg\{ h_1^{2\min\{\eta, 2q\}} + h_2^{2\min\{ \eta, 2q \}} + \Big( \frac{1}{n_1} + \frac{\log n_1}{n_1^{\eta}} \Big)^2 + \Big( \frac{1}{n_2} + \frac{\log n_2}{n_2^{\eta}} \Big)^2 \bigg\}.
			\]
			\item 
			\[
			\frac{\norm{\Omega}_2}{\widetilde{T}} = \frac{1}{n_1} + \frac{1}{n_2}.
			\]
			\item 
			\[
			\abs{ \frac{\E ( \boldsymbol{\mu}_j^\T \mathbf{v} \mathbf{v}^\T \mathbf{v} )}{\widetilde{T} \norm{\Omega}_2} }
			\leq \frac{1}{n_1 m_1 h_1} \Bigg(\frac{\norm{\boldsymbol{\mu}_j}_2^2}{\widetilde{T}} \Bigg)^{1/2} + \frac{1}{n_2 m_2 h_2} \Bigg( \frac{\norm{\boldsymbol{\mu}_j}_2^2}{\widetilde{T}} \Bigg)^{1/2},
			\]
			for $j=0,1$, and in particular
			\[
			\frac{\E ( \boldsymbol{\mu}_0^\T \mathbf{v} \mathbf{v}^\T \mathbf{v} )}{\widetilde{T} \norm{\Omega}_2} = \mathcal{O} \bigg( \frac{h_1^{\min\{\eta, 2q\} - 1}}{m_1 n_1} + \frac{h_2^{\min\{\eta, 2q\} - 1}}{m_2 n_2} \bigg) .
			\]
			\item 
			\[
			\frac{ \Var^{1/2} (\mathbf{v}^\T \mathbf{v}) }{\widetilde{T}} = \mathcal{O} \bigg( \frac{1}{n_1 h_1^{1/2}} + \frac{1}{n_2 h_2^{1/2}} \bigg).
			\]
			\item 
			\[
			\frac{\tr (\Omega)}{\widetilde{T}} = \mathcal{O} \bigg( \frac{1}{n_1 h_1} + \frac{1}{n_2 h_2} \bigg).
			\]
		\end{enumerate}
	\end{lemma}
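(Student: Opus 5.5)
The plan is to work in the common eigenbasis $U$ of the circulant kernel matrices, writing $K_k=U\Lambda_kU^\T$ with eigenvalues $\lambda_{k,t}=\{1+(c_t\pi h_k t)^{2q}\}^{-1}$ from Lemma~\ref{lem:PropertiesK}. The first step is to compute $\Omega=\E(\mathbf{v}\mathbf{v}^\T)$ explicitly. The pieces $\boldsymbol{\zeta}_1,\boldsymbol{\zeta}_2,\boldsymbol{\xi}_1,\boldsymbol{\xi}_2$ are centred, independent across $k$ and across $t$. I will treat $\boldsymbol{\xi}_k$ as a lower-order term whose covariance is $\mathcal{O}\{(\log m_k)^4 (m_k^{-2}+T^{-2}+\dots)\}\mathbf{I}$ by Lemma~\ref{lem:DataDecomposition}. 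This gives
\[
\Omega = m_1^{-1}K_1^2+m_2^{-1}K_2^2\,\{1+o(1)\}.
\]
Since the eigenvalues are at most one and $\lambda_{k,\widetilde T}=1$, item (iii) follows: $\norm{\Omega}_2=m_1^{-1}+m_2^{-1}$, and dividing by $\widetilde T\asymp T$ gives $n_1^{-1}+n_2^{-1}$. Item (vi) then follows from $\tr(K_k^2)=\mathcal{O}(h_k^{-1})$ in \eqref{eq:OrderTrK2}, because $\tr(\Omega)/\widetilde T=\sum_k \mathcal{O}\{(m_kTh_k)^{-1}\}=\sum_k\mathcal{O}\{(n_kh_k)^{-1}\}$.

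For (i), under $\Hyp_0$ we have $\mathbf{g}_1=\mathbf{g}_2=\mathbf{g}$. I split
\[
\boldsymbol{\mu}_0=(K_1-\mathbf{I})\mathbf{g}-(K_2-\mathbf{I})\mathbf{g}+K_1\boldsymbol{\epsilon}_1-K_2\boldsymbol{\epsilon}_2 .
\]
The $\boldsymbol{\epsilon}$ terms contribute $\sum_k(n_k^{-1}+n_k^{-\eta}\log n_k)^2$ because $\norm{K_k}_2\le1$. For the bias terms, I go to the Fourier side: $\widetilde T^{-1}\norm{(K_k-\mathbf{I})\mathbf{g}}^2=\widetilde T^{-1}\sum_t(1-\lambda_{k,t})^2\tilde g_t^2$. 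Here I use $1-\lambda_{k,t}\le \min\{1,(2\pi h_kt)^{2q}\}$ and the decay of the Fourier coefficients of the $\eta$-Hölder function $g$, exactly as in Step~5 of the proof of Theorem~\ref{th:MinimaxTest}, splitting into the cases $\eta\le 2q$ and $\eta>2q$. This yields $h_k^{2\min\{\eta,2q\}}$. Item (ii) is immediate from (i), since $\E\{(\boldsymbol{\mu}_0^\T\mathbf{v})^2\}=\boldsymbol{\mu}_0^\T\Omega\boldsymbol{\mu}_0\le\norm{\Omega}_2\norm{\boldsymbol{\mu}_0}^2$.

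For (iv), I write $\E(\boldsymbol{\mu}_j^\T\mathbf{v}\,\mathbf{v}^\T\mathbf{v})=\sum_{a,b,c}\mu_a\E(v_av_b^2)$. Because $\mathbf{v}$ is a linear image of independent centred coordinates, only third cumulants survive. The Gaussian parts $\boldsymbol{\zeta}_k$ have zero third moment, so only $\E\xi_{k,s}^3$ remains. This gives
\[
\textstyle\sum_{k,s}\E(\xi_{k,s}^3)\,(K_k\boldsymbol{\mu}_j)_s\,\sum_b (K_k)_{bs}^2 .
\]
I bound $\sum_b(K_k)_{bs}^2=\mathcal{O}\{(Th_k)^{-1}\}$ by \eqref{eq:OrderSumK2}, bound the third moments via Lemma~\ref{lem:DataDecomposition}, and apply Cauchy--Schwarz in $s$ to produce $\norm{\boldsymbol{\mu}_j}/\widetilde T^{1/2}$ times a factor of order $(n_km_kh_k)^{-1}$ after division by $\widetilde T\norm{\Omega}_2$. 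Inserting (i) then gives the particular bound for $j=0$.

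The main obstacle is (v), the variance of the quadratic form $\mathbf{v}^\T\mathbf{v}$. Here I use the fourth-cumulant formula for quadratic forms in independent coordinates:
\[
\Var(\mathbf{v}^\T\mathbf{v})=2\tr(\Omega^2)+\sum_{k,s}\kappa_4(\zeta_{k,s}+\xi_{k,s})\{(K_k^2)_{ss}\}^2 .
\]
The leading term satisfies $2\tr(\Omega^2)\lesssim\sum_k m_k^{-2}\norm{K_k^2}_F^2=\sum_k\mathcal{O}(m_k^{-2}h_k^{-1})$ by \eqref{eq:OrderNormFK2}. Dividing by $\widetilde T^2$ and taking square roots gives $(n_kh_k^{1/2})^{-1}$. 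The fourth-cumulant remainder involves only $\xi$, with $\kappa_4=\mathcal{O}\{(\log m)^8m^{-4}\}$ plus $T^{-4}$-type terms, and $(K_k^2)_{ss}=\mathcal{O}\{(Th_k)^{-1}\}$. The delicate point is checking, under the constraint $\nu_k>1-\min\{1,\eta_k\}/3$, that these $T$- and $\log$-dependent pieces are negligible relative to $m_k^{-2}h_k^{-1}$. I would verify this by direct comparison of exponents using $h_kT\to\infty$, and this is where the argument is most likely to need care.
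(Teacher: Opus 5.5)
\paragraph{Overall.} Your plan follows the paper's proof essentially item by item:
\begin{itemize}
\item (ii) via $\boldsymbol{\mu}_0^\T\Omega\boldsymbol{\mu}_0\le\norm{\Omega}_2\norm{\boldsymbol{\mu}_0}_2^2$;
\item (iii) and (vi) via $\Omega\approx m_1^{-1}K_1^2+m_2^{-1}K_2^2$, $\lambda_{k,\widetilde T}=1$ and $\tr(K_k^2)=\mathcal{O}(h_k^{-1})$;
\item (iv) by reducing $\E(\boldsymbol{\mu}_j^\T\mathbf{v}\mathbf{v}^\T\mathbf{v})$ to $\pm\sum_u (K_k\boldsymbol{\mu}_j)_u (K_k^2)_{uu}\E(\omega_{k,u}^3)$, with $\omega_{k,u}=\zeta_{k,u}+\xi_{k,u}$, and then using kernel row sums and Cauchy--Schwarz.
\end{itemize}

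\paragraph{The gap in (iv).} The failing step is ``the Gaussian parts have zero third moment, so only $\E\xi^3$ remains.'' This needs $\zeta_{k,u}$ independent of $\xi_{k,u}$. Lemma~\ref{lem:DataDecomposition} does not assert this, and it is incompatible with that lemma's moment bounds.

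The third moment of $\omega_{k,u}$ depends only on its law. Here $\omega_{k,u}$ is the centred version of $2^{-1/2}\log$ of an approximately $\Gamma(m_k/2)$ variable, whose third cumulant is $2^{-3/2}\phi''(m_k/2)\sim-2^{1/2}m_k^{-2}$. Hence $\E(\omega_{k,u}^3)\asymp m_k^{-2}$, carried by $3\E(\zeta_{k,u}^2\xi_{k,u})$, and not $\mathcal{O}(m_k^{-3})$.

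Since $K_k$ is circulant with $\mathbf{1}^\T K_k=\mathbf{1}^\T$, the leading term of $\E(\boldsymbol{\mu}_j^\T\mathbf{v}\mathbf{v}^\T\mathbf{v})$ is $-2^{1/2}\{m_1^{-2}\tr(K_1^2)-m_2^{-2}\tr(K_2^2)\}\widetilde T^{-1}\mathbf{1}^\T\boldsymbol{\mu}_j$. Take $n_1\neq n_2$ and let $\boldsymbol{\mu}_1$ have non-vanishing mean (e.g.\ $f_1$ a constant multiple of $f_2$). After division by $\widetilde T\norm{\Omega}_2$ this term is of order $(n_kh_k)^{-1}(\norm{\boldsymbol{\mu}_1}_2^2/\widetilde T)^{1/2}$. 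That is a factor of order $m_k$ larger than the bound in (iv), so that inequality cannot be obtained this way.

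\paragraph{What can be proved instead.} Lemma~\ref{lem:DataDecomposition} plus Cauchy--Schwarz, without independence, gives
\[
\abs{\E(\omega_{k,u}^3)}\le 3(\E\zeta_{k,u}^4)^{1/2}(\E\xi_{k,u}^2)^{1/2}+\cdots=\mathcal{O}\{(\log m_k)^2m_k^{-2}\}.
\]
The $T$-dependent terms are smaller under the constraint on $\nu_k$. This yields (iv) with $(n_kh_k)^{-1}$ in place of $(n_km_kh_k)^{-1}$, up to a logarithm.

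That weaker bound still suffices downstream at the rate-optimal bandwidth. After $2xy\le x^2+y^2$ it contributes $(n_kh_k)^{-2}=o\{(n_kh_k^{1/2})^{-1}\}$ when $\eta^*>1/2$.

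The paper's own proof makes the same unjustified move when it asserts $\E(\omega_{1,u}^3)\le C_6m_1^{-3}$, so you are not missing an argument the paper supplies. Also, even granting independence, Lemma~\ref{lem:DataDecomposition} bounds $\E\abs{\xi}^3$ only up to a factor $(\log m_k)^6$, which the statement omits.

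\paragraph{Item (i).} Do not follow Step~5 literally. Since $\tilde g_t\approx\widetilde T^{1/2}\hat g(t)$, pointwise decay $\abs{\hat g(t)}\lesssim t^{-\eta}$ gives only $\widetilde T^{-1}\sum_t(1-\lambda_{k,t})^2\tilde g_t^2\lesssim h_k^{2\eta-1}$ for $\eta<2q$. You need square-summed decay. That is what the paper's proof of (i) actually uses: it factors out $\max_t$ of the weight times $t^{-2\eta}$ and keeps the Sobolev-type quantity $\norm{\mathbf{g}^{(\eta)}}^2$. For H\"older $g$, the dyadic bound $\sum_{2^l\le t<2^{l+1}}\abs{\hat g(t)}^2\lesssim2^{-2l\eta}$ makes this rigorous.

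\paragraph{Item (v).} Your exact identity $\Var(\mathbf{v}^\T\mathbf{v})=2\tr(\Omega^2)+\sum_{k,s}\kappa_4(\omega_{k,s})\{(K_k^2)_{ss}\}^2$ is tidier than the paper's term-by-term expansion. The remainder is not delicate: $\abs{\kappa_4(\omega_{k,s})}\le\E\omega_{k,s}^4+3(\E\omega_{k,s}^2)^2=\mathcal{O}(m_k^{-2})$ and $\sum_s\{(K_k^2)_{ss}\}^2\le\norm{K_k^2}_F^2=\mathcal{O}(h_k^{-1})$. So it is of the same order as the main term, which is how the paper absorbs it into $C_{10}m_1^{-2}\norm{K_1^2}_F^2$. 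This also avoids assuming the cumulant ``involves only $\xi$.''

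\paragraph{Item (iii).} Here $\Cov(\zeta_{k,t},\xi_{k,t})=o(m_k^{-1})$ by Cauchy--Schwarz. So $\norm{\Omega}_2=(m_1^{-1}+m_2^{-1})\{1+o(1)\}$ holds without independence, though only asymptotically rather than as an exact equality.
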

	
	\begin{proof}
		Before analysing each term, observe that under the null hypothesis $g_1 \equiv g_2$, therefore, whenever considering $\Hyp_0$, we indicate both log-spectral densities by $g \in \mathcal{F}_{\eta}$, for $\eta=\eta_1=\eta_2$.
		\begin{enumerate}[label=(\roman*)]
			\item By triangle inequality, we have that
			\[
			\frac{1}{\widetilde{T}} \boldsymbol{\mu}_0^\T \boldsymbol{\mu}_0 = \frac{1}{\widetilde{T}} \norm{K_1 (\mathbf{g} + \boldsymbol{\epsilon}_1) - K_2 (\mathbf{g} +\boldsymbol{\epsilon}_2) }_{\ell_2}^2 \leq \frac{2}{\widetilde{T}} \norm{(K_1 - K_2) \mathbf{g}}^2 + \frac{2}{\widetilde{T}} \norm{K_1 \boldsymbol{\epsilon}_1 - K_2 \boldsymbol{\epsilon}_2 }^2.
			\]
			
			We can bound the second term thanks to the triangle inequality, the properties of the kernel matrices $K_1$ and $K_2$ and the order of $\epsilon_1$ and $\epsilon_2$ as $h_k\to 0$ and $Th_k\to\infty$, obtaining
			\[
			\begin{split}
				\frac{2}{\widetilde{T}}\norm{K_1 \boldsymbol{\epsilon}_1 - K_2 \boldsymbol{\epsilon}_2}^2 
				\leq \frac{4}{\widetilde{T}} \{ \norm{\boldsymbol{\epsilon}_1}_{\ell_2}^2 + \norm{\boldsymbol{\epsilon}_2}_{\ell_2}^2 \} 
				\leq 4 \Bigg\{ c'_1 \bigg( \frac{1}{n_1} + \frac{\log n_1}{n_1^{\eta}} \bigg)^2 + c'_2 \bigg( \frac{1}{n_2} + \frac{\log n_2}{n_2^{\eta}} \bigg)^2 \Bigg\}.
			\end{split}
			\]
			
			For the first term, set $\tilde{\mathbf{g}} = U^\T \mathbf{g}$ the Fourier transform of $\mathbf{g}$, and using symmetry
			\[
			\begin{split}
				\frac{2}{\widetilde{T}} \norm{(K_1 - K_2) \mathbf{g}}^2
				& = \frac{4}{\widetilde{T}} \sum_{t=1}^{\widetilde{T}/2} ( \lambda_{1,t} - \lambda_{2,t} )^2 \tilde{g}_t^2 \\
				& = \frac{4}{\widetilde{T}} \sum_{t=1}^{\widetilde{T}/2} \frac{(h_1^{2q} - h_2^{2q})^2 (c_t \pi t)^{4q}}{\{1+(c_t \pi h_1 t)^{2q}\}^2 \{1+(c_t \pi h_2 t)^{2q}\}^2 } \tilde{g}_t^2,
			\end{split}
			\]
			where we have used \eqref{eq:DecayLambdaj}.
			Now, if $\eta > 2q$
			\[
			\frac{\norm{(K_1 -K_2) \mathbf{g}}^2}{\widetilde{T}} \leq 2 (h_1^{2q} - h_2^{2q})^2 \frac{1}{\widetilde{T}} \sum_{t=1}^{\widetilde{T}/2} (c_t \pi t)^{4q} \tilde{g}_t^2 \leq C_1 (h_1^{2q} - h_2^{2q})^2 \norm{\mathbf{g}^{(2q)}}^2 \leq C_2 \big( h_1^{4q} + h_2^{4q}\big),
			\]
			while if $1/2 < \eta < 2q$
			\[
			\begin{split}
				\frac{\norm{(K_1 -K_2) \mathbf{g}}^2}{\widetilde{T}}
				& \leq 4 \max_{t=1, \dots, \widetilde{T}/2} \bigg[ \frac{(h_1^{2q} - h_2^{2q})^2 (c_t \pi t)^{4q-2\eta}}{\{1+(c_t \pi h_1 t)^{2q}\}^2 \{1+(c_t \pi h_2 t)^{2q}\}^2 } \bigg] \norm{\mathbf{g}^{(\eta)}}^2 \\
				& \leq C_3 h_1^{2\eta} \max_{t=1, \dots, \widetilde{T}/2} \bigg[ \frac{  (c_t \pi h_1 t)^{4q-2\eta}}{\{1+(c_t \pi h_1 t)^{2q}\}^2 } \bigg] + C_3 h_2^{2\eta} \max_{t=1, \dots, \widetilde{T}/2} \bigg[ \frac{ (c_t \pi h_2 t)^{4q-2\eta}}{\{1+(c_t \pi h_2 t)^{2q}\}^2 } \bigg] \\
				& \leq C_4 \big( h_1^{2\eta} + h_2^{2\eta} \big),
			\end{split}
			\]
			obtaining
			\[
			\frac{\norm{(K_1 -K_2) \mathbf{g}}^2}{\widetilde{T}} \leq C_5 \big( h_1^{2\min\{\eta, 2q\}} + h_2^{2\min\{ \eta, 2q \}} \big).
			\]
			
			Finally,
			\begin{equation}\label{eq:OrderE[(mu0^tr)2]}
				\frac{1}{\widetilde{T}} \boldsymbol{\mu}_0^\T \boldsymbol{\mu}_0 = \mathcal{O} \Bigg\{ h_1^{2\min\{\eta, 2q\}} + h_2^{2\min\{ \eta, 2q \}} + \bigg( \frac{1}{n_1} + \frac{\log n_1}{n_1^{\eta}} \bigg)^2 + \bigg( \frac{1}{n_2} + \frac{\log n_2}{n_2^{\eta}} \bigg)^2 \Bigg\}.
			\end{equation}
			
			\item For $j=0,1$
			\[
			\begin{split}
				E \big\{ (\boldsymbol{\mu}_j^\T \mathbf{v})^2 \big\} = E ( \boldsymbol{\mu}_j^\T \mathbf{v} \mathbf{v}^\T \boldsymbol{\mu}_j ) = \boldsymbol{\mu}_j^\T E (\mathbf{v} \mathbf{v}^\T) \boldsymbol{\mu}_j = \boldsymbol{\mu}_j^\T \Omega \boldsymbol{\mu}_j,
			\end{split}
			\]
			and in particular
			\[
			\frac{E \big\{ (\boldsymbol{\mu}_0^\T \mathbf{v})^2 \big\} }{\widetilde{T} \norm{\Omega}_2} = \frac{\boldsymbol{\mu}_0^\T \Omega \boldsymbol{\mu}_0}{\widetilde{T} \norm{\Omega}_2} \leq \frac{1}{\widetilde{T}} \boldsymbol{\mu}_0^\T \boldsymbol{\mu}_0 ,
			\]
			so that
			\[
			\frac{\E \big\{ (\boldsymbol{\mu}_0^\T \mathbf{v})^2 \big\}}{ \widetilde{T} \norm{\Omega}_2} = \mathcal{O} \Bigg\{ h_1^{2\min\{\eta, 2q\}} + h_2^{2\min\{ \eta, 2q \}} + \bigg( \frac{1}{n_1} + \frac{\log n_1}{n_1^{\eta}} \bigg)^2 + \bigg( \frac{1}{n_2} + \frac{\log n_2}{n_2^{\eta}} \bigg)^2 \Bigg\}.
			\]
			
			\item Recall by the definition of $\Omega$, set $\boldsymbol{\omega}_k = \boldsymbol{\zeta}_k + \boldsymbol{\xi}_k$,
			\[
			\Omega = K_1 \Var (\boldsymbol{\omega}_1) K_1 + K_2 \Var (\boldsymbol{\omega}_2) K_2 = \frac{K_1^2}{m_1} + \frac{K_2^2}{m_2},
			\]
			therefore, since the two matrices are simultaneously diagonalizable
			\[
			\norm{\Omega}_2 = \lambda_{\widetilde{T}}^{(\Omega)} = \frac{\lambda_{\widetilde{T}}^{(K_1^2)}}{m_1} + \frac{\lambda_{\widetilde{T}}^{(K_2^2)}}{m_2} = \frac{1}{m_1} + \frac{1}{m_2},
			\]
			from which it follows
			\[
			\frac{\norm{\Omega}_2}{\widetilde{T}} = \frac{1}{n_1} + \frac{1}{n_2}.
			\]
			
			\item For $j=0,1$
			\[
			\begin{split}
				\E ( \boldsymbol{\mu}_j^\T \mathbf{v} \mathbf{v}^\T \mathbf{v} )
				& = \sum_{t,s=1}^{\widetilde{T}} \mu_{j,t} \E ( v_t v_s^2) \\
				& = \sum_{t,s,u,i,l=1}^{\widetilde{T}} \mu_{j,t} (K_1)_{t,u} (K_1)_{s,i} (K_1)_{s,l} \E ( \omega_{1,u} \omega_{1,i} \omega_{1,l}) + \\
				& \quad - \sum_{t,s,u,i,l=1}^{\widetilde{T}} \mu_{j,t} (K_2)_{t,u} (K_2)_{s,i} (K_2)_{s,l} \E ( \omega_{2,u} \omega_{2,i} \omega_{2,l}) .
			\end{split}
			\]
			Focusing on one of the two terms, thanks to independence, 
			\[
			\begin{split}
				& \sum_{t,s,u,i,l=1}^{\widetilde{T}} \mu_{j,t} (K_1)_{t,u} (K_1)_{s,i} (K_1)_{s,l} \E ( \omega_{1,u} \omega_{1,i} \omega_{1,l} ) \\
				& = \sum_{t,s,u=1}^{\widetilde{T}} \mu_{j,t} (K_1)_{t,u} (K_1)_{s,u}^2 \E ( \omega_{1,u}^3 ) \\
				& \leq C_6 \frac{1}{m_1^3} \sum_{t=1}^{\widetilde{T}} \abs{\mu_{j,t}} \sum_{u=1}^{\widetilde{T}} \abs{(K_1)_{t,u}} \bigg\{\sum_{s=1}^{\widetilde{T}} (K_1)_{s,u}^2 \bigg\} \\
				& \leq C_7 \frac{\norm{\boldsymbol{\mu}_j}_1}{m_1^3 \widetilde{T} h_1} \\
				& \leq C_7 \frac{1}{m_1^3 h_1} \Bigg( \frac{\norm{\boldsymbol{\mu}_j}_2^2}{\widetilde{T}} \Bigg)^{1/2},
			\end{split}
			\]
			where we have used \eqref{eq:OrderSumK}, and proceeding similarly with the second term we obtain
			\[
			\begin{split}
				\abs{ \frac{\E ( \boldsymbol{\mu}_j^\T \mathbf{v} \mathbf{v}^\T \mathbf{v} )}{\widetilde{T} \norm{\Omega}_2} }
				& \leq C_8 \frac{1}{m_1^3 h_1} \frac{1}{\widetilde{T} \norm{\Omega}_2 } \Bigg( \frac{\norm{\boldsymbol{\mu}_j}_2^2}{\widetilde{T}} \Bigg)^{1/2} + C_8 \frac{1}{m_2^3 h_2} \frac{1}{\widetilde{T} \norm{\Omega}_2 } \Bigg( \frac{\norm{\boldsymbol{\mu}_j}_2^2}{\widetilde{T}} \Bigg)^{1/2} \\
				& \leq C_9 \frac{1}{n_1 m_1 h_1} \Bigg( \frac{\norm{\boldsymbol{\mu}_j}_2^2}{\widetilde{T}} \Bigg)^{1/2} + C_9 \frac{1}{n_2 m_2 h_2} \Bigg( \frac{\norm{\boldsymbol{\mu}_j}_2^2}{\widetilde{T}} \Bigg)^{1/2} ,\\
			\end{split}
			\]
			where we have used the fact that $\norm{\Omega}_2 = m_1^{-1}+m_2^{-1} \geq m_k^{-1}$ for any $k=1,2$. Recalling \eqref{eq:OrderE[(mu0^tr)2]} we get
			\begin{equation}\label{eq:OrderE[mu0^trr^tr]}
				\frac{\E ( \boldsymbol{\mu}_0^\T \mathbf{v} \mathbf{v}^\T \mathbf{v} )}{\widetilde{T} \norm{\Omega}_2} = \mathcal{O} \bigg( \frac{h_1^{\min\{\eta,2q\} - 1}}{m_1 n_1} + \frac{h_2^{\min\{\eta,2q\} - 1}}{m_2 n_2} \bigg).
			\end{equation}
			
			\item We have,
			\[
			\Var \big( \mathbf{v}^\T \mathbf{v} \big)
			= \Var\big\{ (K_1 \boldsymbol{\omega}_1)^\T (K_1 \boldsymbol{\omega}_1) \big\} + \Var \big\{(K_2 \boldsymbol{\omega}_2)^\T (K_1 \boldsymbol{\omega}_2) \big\} + 4 \Var \big\{(K_1 \boldsymbol{\omega}_1)^\T (K_2 \boldsymbol{\omega}_2)\big\} .
			\]
			
			We first focus on one of the first two terms, $\Var\big\{ (K_1 \boldsymbol{\omega}_1)^\T (K_1 \boldsymbol{\omega}_1) \big\}$, and we get
			\[
			\begin{split}
				\Var\big\{ (K_1 \boldsymbol{\omega}_1)^\T (K_1 \boldsymbol{\omega}_1) \big\}
				& = 2 \sum_{s,t=1}^{\widetilde{T}} \sum_{u=1}^{\widetilde{T}} \sum_{j=1}^{\widetilde{T}} (K_1)_{t,u} (K_1)_{t,j} (K_1)_{s,u} (K_1)_{s,j} \mbox{Cov} (\omega_{1,u} \omega_{1,j}, \omega_{1,u} \omega_{1,j}) + \\
				& \quad - \sum_{s,t=1}^{\widetilde{T}} \sum_{u=1}^{\widetilde{T}} (K_1)_{t,u}^2 (K_1)_{s,u}^2 \mbox{Cov} (\omega_{1,u}^2, \omega_{1,u}^2) \\
				& \leq C_{10} \frac{1}{m_1^2} \sum_{s,t=1}^{\widetilde{T}} \Biggl( \sum_{u=1}^{\widetilde{T}} (K_1)_{t,u} (K_1)_{s,u} \Biggr) \Biggl( \sum_{j=1}^{\widetilde{T}} (K_1)_{t,j} (K_1)_{s,j} \Biggr) \\
				& = C_{10} \frac{1}{m_1^2} \sum_{s,t=1}^{\widetilde{T}} \Biggl( \sum_{u=1}^{\widetilde{T}} (K_1)_{t,u} (K_1)_{s,u} \Biggr)^2 \\
				& = C_{10} \frac{1}{m_1^2} \norm{K_1^2}_F^2 \\
				& = \mathcal{O} \bigg( \frac{1}{m_1^2 h_1} \bigg),
			\end{split}
			\]
			where we have used that
			\[
			\norm{K_1^2}_F^2 = \mathcal{O} \big( h_1^{-1} \big),
			\]
			as shown in Lemma \ref{lem:PropertiesK}. 
			
			Proceeding analogously, it is possible to show that
			\[
			\Var\big\{ (K_2 \boldsymbol{\omega}_2)^\T (K_2 \boldsymbol{\omega}_2) \big\} = \mathcal{O} \bigg( \frac{1}{m_2^2 h_2} \bigg), 
			\qquad \mbox{and} \qquad
			\Var \big\{(K_1 \boldsymbol{\omega}_1)^\T (K_2 \boldsymbol{\omega}_2)\big\} = \mathcal{O} \bigg( \frac{1}{\min_k \{m_k^2 h_k\}} \bigg),
			\]
			where the latter follows from the fact that
			\[
			\tr (K_1^2 K_2^2) = \mathcal{O} \bigg( \frac{1}{h_1 + h_2} \bigg),
			\]
			which can be shown similarly to \eqref{eq:OrderNormFK2}.
			
			Finally,
			\[
			\frac{ \Var^{1/2} ( \mathbf{v}^\T \mathbf{v} ) }{\widetilde{T}} = \mathcal{O} \bigg( \frac{1}{n_1 h_1^{1/2}} + \frac{1}{n_2 h_2^{1/2}} \bigg).
			\]
			
			\item We obtain the order of $\tr (\Omega) / \widetilde{T}$. Since $K_1$ and $K_2$ are simultaneously diagonalizable, then we have that 
			\[
			\tr (\Omega) = \frac{1}{m_1} \tr (K_1^2) + \frac{1}{m_2} \tr (K_2^2).
			\]
			Now, from the properties of the kernel matrices in Appendix \ref{apx:Kernel matrix} it follows
			\[
			\frac{\tr (\Omega)}{\widetilde{T}} = \mathcal{O} \bigg( \frac{1}{n_1 h_1} + \frac{1}{n_2 h_2} \bigg).
			\]
		\end{enumerate}
	\end{proof}
	
	\subsection{Proof of Lemma \ref{lem:Sum2Integral}}
	
	\begin{lemma}\label{lem:Sum2Integral}
		Let $f \in \mathcal{F}_{\eta}$, with $\eta > 1/2$. Then
		\[
		\frac{1}{N} \sum_{t=1}^{N} f \bigg( \frac{t}{N}\bigg) = \int_{0}^{1} f(x) dx + \mathcal{O}(N^{-\eta}),
		\]
		as $N \to \infty$.
	\end{lemma}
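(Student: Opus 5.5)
We will prove the statement as a Riemann-sum error estimate, splitting the integral over the grid cells and using the H\"older regularity of $f$ cell by cell. Write
\[
\frac{1}{N}\sum_{t=1}^{N} f\Big(\frac tN\Big) - \int_0^1 f(x)\,dx = \sum_{t=1}^{N}\int_{(t-1)/N}^{t/N}\Big\{ f\Big(\frac tN\Big)-f(x)\Big\}\,dx .
\]

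\textbf{Case $1/2<\eta\le 1$.} Here $\floor{\eta}=0$, so the defining condition of $\mathcal{H}_\eta(M_0,M_1)$ gives $\labs{f(t/N)-f(x)}\le M_1\labs{t/N-x}^{\eta}\le M_1N^{-\eta}$ on each cell. Summing over $N$ cells of length $1/N$ yields the bound $M_1N^{-\eta}$ directly.

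\textbf{Case $\eta>1$.} The naive pointwise bound only gives $\mathcal{O}(N^{-1})$ via the Lipschitz property of $f$, so more structure is needed. The approach is to exploit periodicity: in the paper $f$ (and, in the application, $(g_1-g_2)^2$) is $1$-periodic after the rescaling to $[0,1)$ with the mirroring. For a periodic function, the Euler--Maclaurin formula on the full period has no boundary terms. Concretely:
\begin{enumerate}
\item Expand $f$ in its Fourier series $f(x)=\sum_{j}\hat f_j e^{2\pi i jx}$.
\item Use that the rectangle rule integrates $e^{2\pi i jx}$ exactly unless $N\mid j$, in which case it returns $1$ instead of $0$.
\item This gives the error $\sum_{l\neq 0}\hat f_{lN}$.
\item Bound this using the standard H\"older decay of Fourier coefficients, $\labs{\hat f_j}\lesssim M_1\labs{j}^{-\eta}$ (obtained by integrating by parts $\floor{\eta}$ times and then using the shift-difference trick for the remaining fractional part).
\item Conclude that the error is $\lesssim N^{-\eta}\sum_{l\ge1}l^{-\eta}=\mathcal{O}(N^{-\eta})$, which is finite because $\eta>1$.
\end{enumerate}

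\textbf{Boundary case $\eta=1$ in the Fourier route.} The series $\sum_{l\ge1} l^{-\eta}$ diverges at $\eta=1$, so it is cleaner to use the direct cell argument above, which covers $\eta\le1$ in full.

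\textbf{Main obstacle.} The key difficulty is justifying periodicity, equivalently the absence of endpoint terms, for $\eta>1$. Without periodicity the rectangle rule with right endpoints has error $\tfrac{1}{2N}\{f(1)-f(0)\}+\dots$, and the claim would fail beyond order $N^{-1}$. I would therefore state explicitly that $f$ is understood as a $1$-periodic function. This is consistent with the mirrored construction of the design points and with the periodic spline framework used throughout.

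If periodicity were unavailable, the fallback is to record the weaker rate $\mathcal{O}(N^{-\min\{\eta,1\}})$ from the cell argument. That rate is all that is actually used in Step 6 of the proof of Theorem~\ref{th:MinimaxTest} under condition~\eqref{eq:ConditionNuk}.
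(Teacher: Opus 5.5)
Your proof is correct and follows essentially the same argument as the paper. For $1/2<\eta\le 1$ it uses the cell-wise H\"older bound giving $M_1N^{-\eta}$, and for $\eta>1$ the aliasing identity reducing the error to $\sum_{l\neq 0}\hat f_{lN}$ together with the decay $\lvert\hat f_j\rvert\lesssim\lvert j\rvert^{-\eta}$; your explicit periodicity caveat is exactly what the paper's Fourier expansion on $[0,1)$ tacitly assumes. One side remark is off: the fallback rate $\mathcal{O}(N^{-1})$ is not guaranteed to suffice in Step~6 when $\min\{\eta_1,\eta_2\}\ge 2$, since condition~\eqref{eq:ConditionNuk} only yields $T^{-\lfloor\min\{\eta_1,\eta_2\}\rfloor}\lesssim h_k^{2\eta^*}$, not $T^{-1}\lesssim h_k^{2\eta^*}$.
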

	
	\begin{proof}
		We first consider $\eta > 1$, and we show that
		\[
		\sum_{t=1}^{N} f \bigg( \frac{t}{N}\bigg) - N \int_{0}^{1} f(x) dx = \mathcal{O}(N^{-\eta+1}).
		\]
		Writing the Fouruier expansion of $f$, we get
		\[
		\begin{split}
			\sum_{t=1}^{N} f \bigg( \frac{t}{N}\bigg) - N \int_{0}^{1} f(x) dx
			& = \sum_{t=1}^{N} \sum_{j \in \mathbb{Z}} \hat{f}(j) e^{2 \pi i j \frac{t}{N}} - N \int_{0}^{1} f(x) dx \\
			& = N \hat{f}(0) + \sum_{j \in \mathbb{Z}\setminus \{0\}} \hat{f}(j) \sum_{t=1}^{N} e^{2 \pi i j \frac{t}{N}} - N \hat{f}(0) \\
			& = \sum_{j \in \mathbb{Z}\setminus \{0\}} \hat{f}(j) N \mathbf{1}_{N \mathbb{Z}}(j) \\
			& = N \sum_{j \neq 0} \hat{f}(jN).
		\end{split}
		\]
		By the decay of the Fourier coefficients of $f$, we obtain the result
		\[
		\abs{ N \sum_{j \neq 0} \hat{f}(jN) } \leq C  N \sum_{j > 0} (jN)^{-\eta} = C  N^{-\eta +1} \sum_{j > 0} j^{-\eta} = \mathcal{O}(N^{-\eta+1}).
		\]
		
		Consider now $ 1/2 < \eta \leq 1$. We look at the following difference
		\[
		\begin{split}
			\frac{1}{N} \sum_{t=1}^{N} f \bigg( \frac{t}{N}\bigg) - \int_{0}^{1} f(x) dx 
			& = \frac{1}{N} \sum_{t=1}^{N} f \bigg( \frac{t}{N}\bigg) - \frac{1}{N} \int_{0}^{N} f \bigg( \frac{x}{N}\bigg) dx \\
			& = \frac{1}{N} \sum_{t=1}^{N} f \bigg( \frac{t}{N}\bigg) - \frac{1}{N} \sum_{t=1}^{N} \int_{t-1}^{t} f \bigg( \frac{x}{N}\bigg) dx \\
			& = \frac{1}{N} \sum_{t=1}^{N} \int_{t-1}^{t} \bigg\{ f \bigg( \frac{t}{N}\bigg) - f \bigg( \frac{x}{N}\bigg) \bigg\} dx, 
		\end{split}
		\]
		and taking the absolute value, we get
		\[
		\begin{split}
			\abs{\frac{1}{N} \sum_{t=1}^{N} f \bigg( \frac{t}{N}\bigg) - \int_{0}^{1} f(x) dx }
			& \leq \frac{1}{N} \sum_{t=1}^{N} \int_{t-1}^{t} \abs{ f \bigg( \frac{t}{N}\bigg) - f \bigg( \frac{x}{N}\bigg) } dx \\
			& \leq \frac{M_1}{N} \sum_{t=1}^{N} \int_{t-1}^{t} \abs{\frac{t - x}{N} }^{\eta} dx \\
			& \leq \frac{M_1}{N^{1+\eta}} \sum_{t=1}^{N} \int_{t-1}^{t} 1 dx \\
			& = \frac{M_1}{N^{\eta}}.
		\end{split}
		\]
	\end{proof}
	
	\subsection{Further simulations and type I error}
	\label{apx:TypeIerrorSimulations}
	
	For completeness, we report the power of our test in the same settings of Section \ref{sec:SimulationStudy}, when the smoothing parameter is selected with maximum likelihood (ML) instead of generalised cross-validation (GCV). For a measure of comparison, we report the power of the wavelet test by \citet{decowski2015wavelet} as well. 
	
	\begin{figure}[!htb]
		\centering
		\begin{subfigure}{0.3\textwidth}
			\centering
			\includegraphics[width=\linewidth]{sdf1_4x4}
			\caption{Setting 1, spectral densities}
			\label{fig:spl_case1_sdf}
		\end{subfigure}
		\hfill
		\begin{subfigure}{0.3\textwidth}
			\centering
			\includegraphics[width=\linewidth]{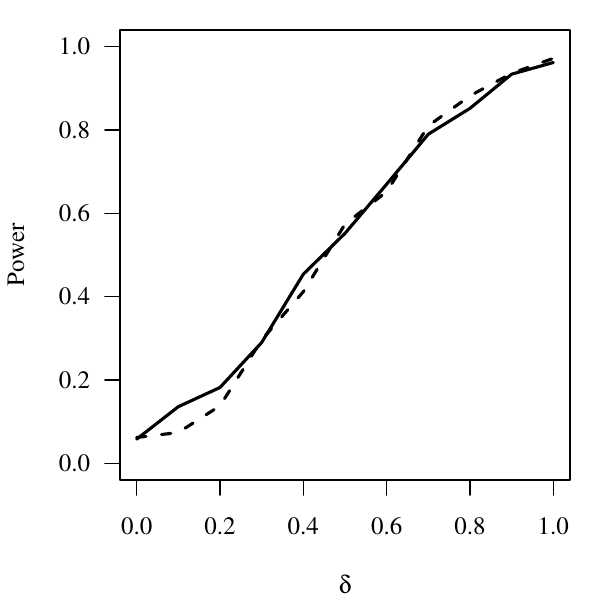}
			\caption{Setting 1, scenario (A)}
			\label{fig:case1_set1_ml}
		\end{subfigure}
		\hfill
		\begin{subfigure}{0.3\textwidth}
			\centering
			\includegraphics[width=\linewidth]{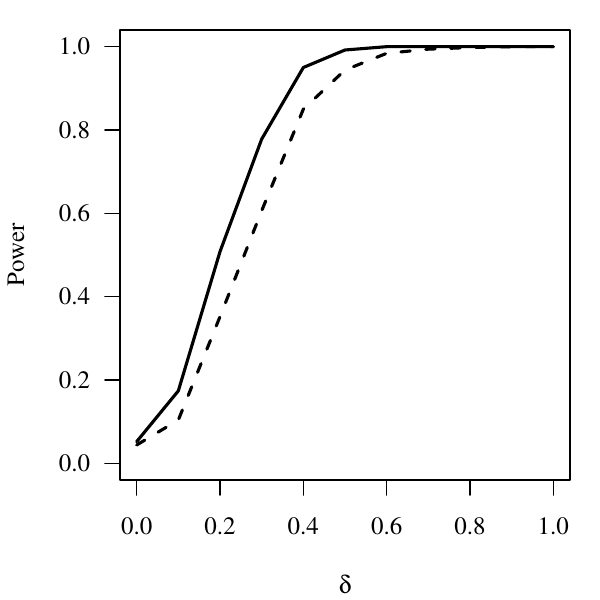}
			\caption{Setting 1, scenario (B)}
			\label{fig:case1_set2_ml}
		\end{subfigure}
		
		\vspace{1em}
		
		\begin{subfigure}{0.3\textwidth}
			\centering
			\includegraphics[width=\linewidth]{sdf2_4x4}
			\caption{Setting 2, spectral densities}
			\label{fig:spl_case2_sdf}
		\end{subfigure}
		\hfill
		\begin{subfigure}{0.3\textwidth}
			\centering
			\includegraphics[width=\linewidth]{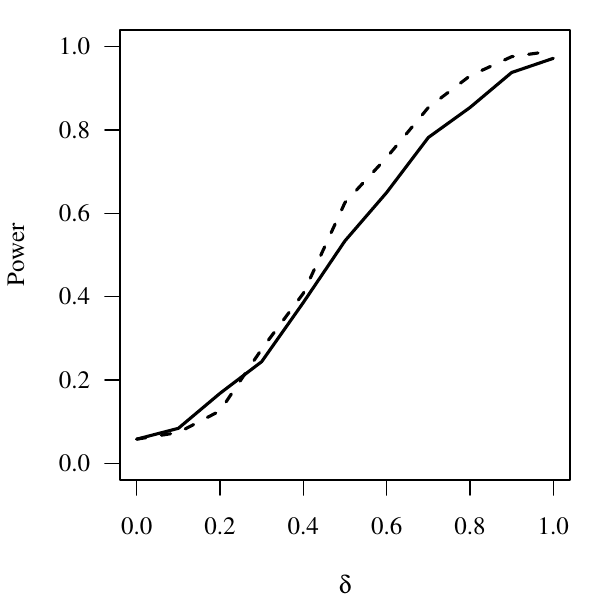}
			\caption{Setting 2, scenario (A)}
			\label{fig:case2_set1_ml}
		\end{subfigure}
		\hfill
		\begin{subfigure}{0.3\textwidth}
			\centering
			\includegraphics[width=\linewidth]{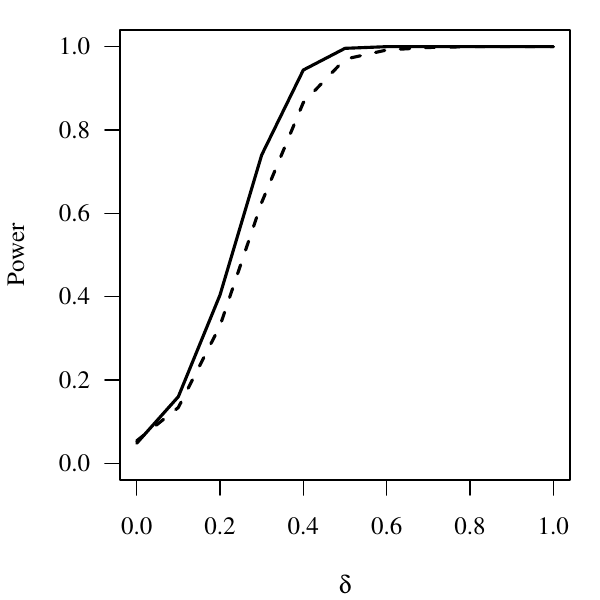}
			\caption{Setting 2, scenario (B)}
			\label{fig:case2_set2_ml}
		\end{subfigure}
		
		\vspace{1em}
		
		\begin{subfigure}{0.3\textwidth}
			\centering
			\includegraphics[width=\linewidth]{sdf3_4x4}
			\caption{Setting 3, spectral densities}
			\label{fig:spl_case3_sdf}
		\end{subfigure}
		\hfill
		\begin{subfigure}{0.3\textwidth}
			\centering
			\includegraphics[width=\linewidth]{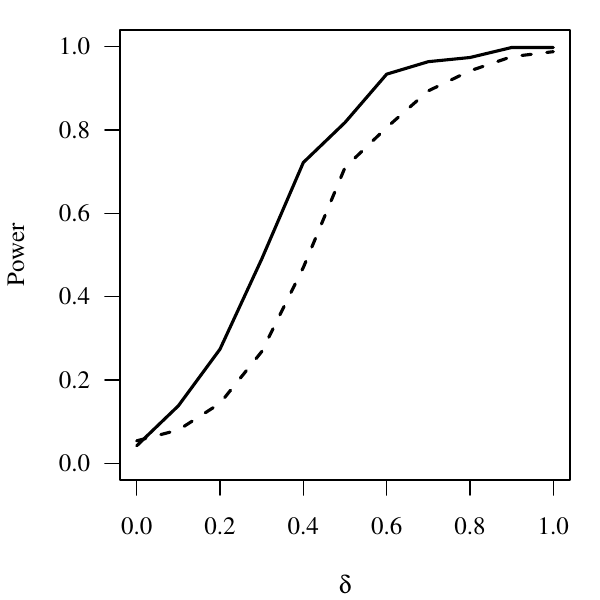}
			\caption{Setting 3, scenario (A)}
			\label{fig:case3_set1_ml}
		\end{subfigure}
		\hfill
		\begin{subfigure}{0.3\textwidth}
			\centering
			\includegraphics[width=\linewidth]{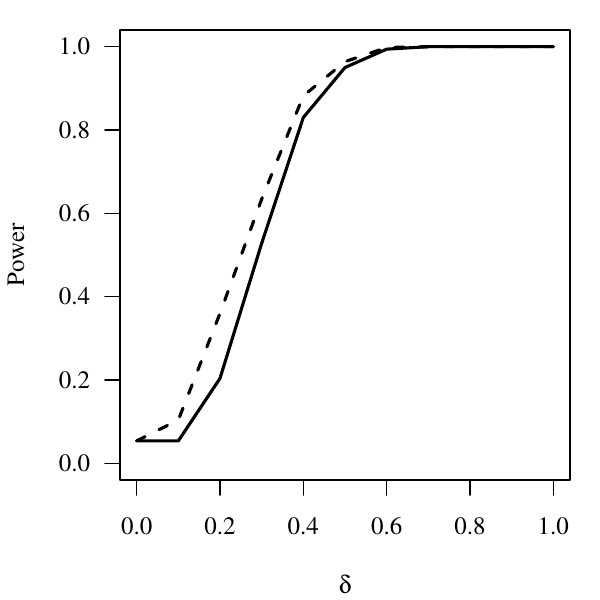}
			\caption{Setting 3, scenario (B)}
			\label{fig:case3_set2_ml}
		\end{subfigure}
		
		\caption{In the first column, the spectral densities $f_1$ (black) and $\widetilde{f}$ (grey). In the second column, the powers of $\psi_{n_1,n_2}$ (solid), and the wavelet-based test (dashed) in scenario (A). In the third column, the powers of $\psi_{n_1,n_2}$ (solid) with maximum likelihood method of smoothing parameter selection, and the wavelet-based test (dashed) in scenario (B).}
		\label{fig:powers_ml}
	\end{figure}
	
	Finally, we report in Table \ref{tab:typeIerrors} the type I errors in the simulations shown in Section \ref{sec:SimulationStudy}. 
	
	\begin{table}[h!]
		\centering
		\caption{Table of type I errors in the simulations.}
		\label{tab:typeIerrors}
		\begin{tabular}{lcccccc}
			& \multicolumn{2}{c}{Setting 1}
			& \multicolumn{2}{c}{Setting 2}
			& \multicolumn{2}{c}{Setting 3} \\
			\cline{2-3} \cline{4-5} \cline{6-7}
			& A & B & A & B & A & B \\
			\hline
			$\psi_{n_1,n_2}$ with GCV & 0.066 & 0.050 & 0.068 & 0.038 & 0.044 & 0.060 \\
			$\psi_{n_1,n_2}$ with ML  & 0.058 & 0.052 & 0.058 & 0.048 & 0.042 & 0.054 \\
			Wavelet test               & 0.062 & 0.044 & 0.058 & 0.054 & 0.054 & 0.054 \\
		\end{tabular}
		
		{\footnotesize}
	\end{table}
	
	\subsection{Comparisons with other tests}
	\label{apx:AdditionalComparisons}
	
	Other available methods for the comparison of two time series are proposed by \citet{dette_paparoditis2009}, \citet{preuss_hildebrandt2013}, and \citet{caiado2012tests}. 
	To compare our test with the tests proposed by them, we perform a ROC analysis, made with $N=500$ samples of time series pairs $X_1$, $X_2$ under the null hypothesis (with same spectral density $f$), and $N=500$ samples of time series pairs $X_1$, $X_2$ under the alternative hypothesis (where $X_1$ has spectral density $f$, while $X_2$ has spectral density $\widetilde{f}$). 
	We consider the following three settings:
	\begin{enumerate}
		\item In the first setting, we assume that $f$ is the spectral density of an AR(1) process with parameter $\phi=0.8$ and standard normal innovations, that is $f(x) = (2 \pi)^{-1} \{1 - 2 \phi \cos(x) + \phi^2\}^{-1}$ for $0 \leq x \leq \pi$. The alternative function $\widetilde{f}$ is the spectral density of a moving averge process MA(1), with moving average parameter $\theta= \phi/(1-\phi^2)^{1/2}$, and standard normal innovations, that is $\widetilde{f} (x) = (2 \pi)^{-1} \{1+2\theta \cos(x) + \theta^2\}$ for $0 \leq x \leq \pi$. 
		\item In the second setting, we assume that $f(x) = 1.44 (2 \pi)^{-1} (\abs{\cos(x/2)}^{5.1} + 0.45)$ for $0 \leq x \leq \pi$, and the alternative function is $\widetilde{f}(x) = 1.44 (2 \pi)^{-1} \{\abs{\cos (x/2 -0.2 \pi ) }^{3.1} + 0.45\}$ for $0 \leq x \leq \pi$. 
		\item In the third setting, we assume that $f(x)=1.44 (2 \pi)^{-1} (\abs{\cos(x/2)}^{5.1}+0.45)$ for $0 \leq x \leq \pi$, and the alternative function is $\widetilde{f}(x) = 0.7 f(x) + 0.3 g(x)$ for $0 \leq x \leq \pi$, where $g(x) = 1.44 (2 \pi)^{-1} \{\abs{\cos (x/2 -0.2 \pi ) }^{3.1} + 0.45\}$. 
	\end{enumerate}	
	The spectral densities of each setting are shown in Figure \ref{fig:spl-sdf}.
	
	\begin{figure}[!htb]
		\centering
		\begin{subfigure}{0.3\textwidth}
			\centering
			\includegraphics[width=\linewidth]{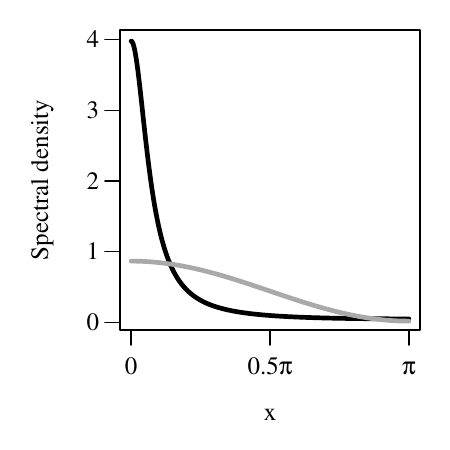}
			\caption{Setting 1, spectral densities}
			\label{fig:spl-case1_sdf}
		\end{subfigure}
		\hfill
		\begin{subfigure}{0.3\textwidth}
			\centering
			\includegraphics[width=\linewidth]{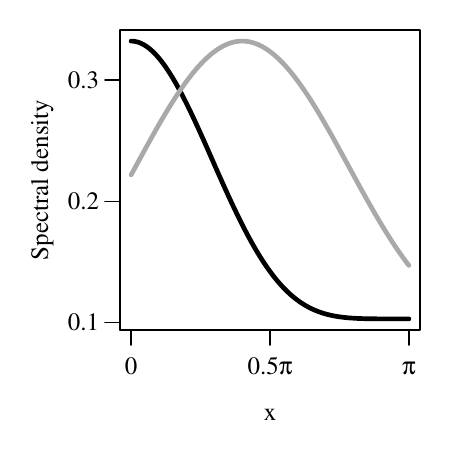}
			\caption{Setting 2, spectral densities}
			\label{fig:spl-case2_sdf}
		\end{subfigure}
		\hfill
		\begin{subfigure}{0.3\textwidth}
			\centering
			\includegraphics[width=\linewidth]{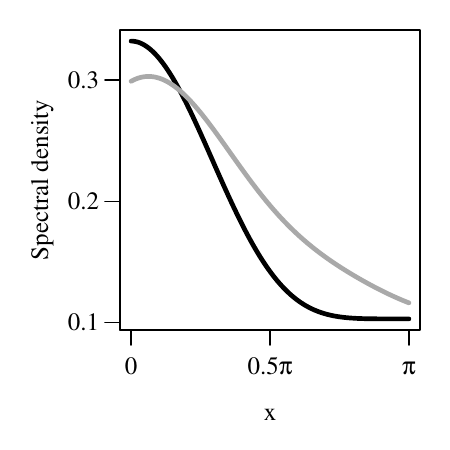}
			\caption{Setting 3, spectral densities}
			\label{fig:spl-case3_sdf}
		\end{subfigure}
		
		\caption{Spectral densities $f$ (black) and $\widetilde{f}$ (grey) for each setting.}
		\label{fig:spl-sdf}
	\end{figure}
	
	Since the test by \citet{dette_paparoditis2009} requires time series with equal length, we update scenario (B) of long time series into a new scenario (B'), in which the time series have equal length ($n_1 = 1024$, and $n_2 = 1024$), and consider it along scenario (A) ($n_1 = 1200$, and $n_2 = 350$).
	For scenario (B'), we set $\nu_1= 0.8$, obtaining the number of bins for our and the wavelet-based test to be $T=256$. 
	Due to the smoothness of the spectral densities, we set $q=4$ for our test. As in the previous comparisons, we select the smoothing parameter for the log-spectral density with generalised cross-validation, and we set $J=1$ for the wavelet-based test.
	All the simulations are performed using \texttt{R} \citep[version 4.2.2, seed 42]{R}. The obtained powers are shown in Figure \ref{fig:powers2}.
	
	\begin{figure}
		\centering
		\begin{subfigure}{0.45\textwidth}
			\centering
			\includegraphics[width=\linewidth]{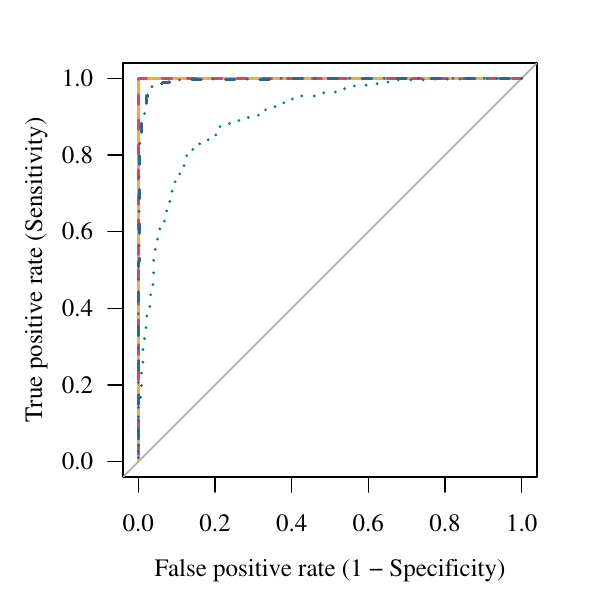}
			\caption{Setting 1, scenario (A)}
			\label{fig:spl_case1_set1}
		\end{subfigure}
		\hfill
		\begin{subfigure}{0.45\textwidth}
			\centering
			\includegraphics[width=\linewidth]{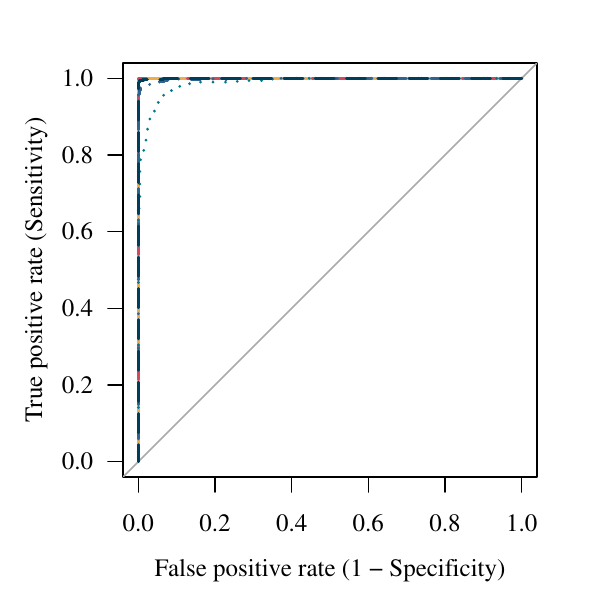}
			\caption{Setting 1, scenario (B')}
			\label{fig:spl_case1_set2}
		\end{subfigure}
		
		\vspace{1em}
		
		\begin{subfigure}{0.45\textwidth}
			\centering
			\includegraphics[width=\linewidth]{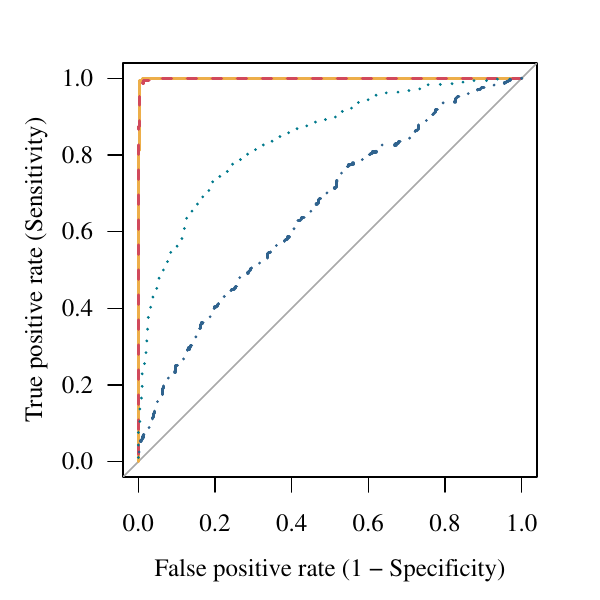}
			\caption{Setting 2, scenario (A)}
			\label{fig:spl_case2_set1}
		\end{subfigure}
		\hfill
		\begin{subfigure}{0.45\textwidth}
			\centering
			\includegraphics[width=\linewidth]{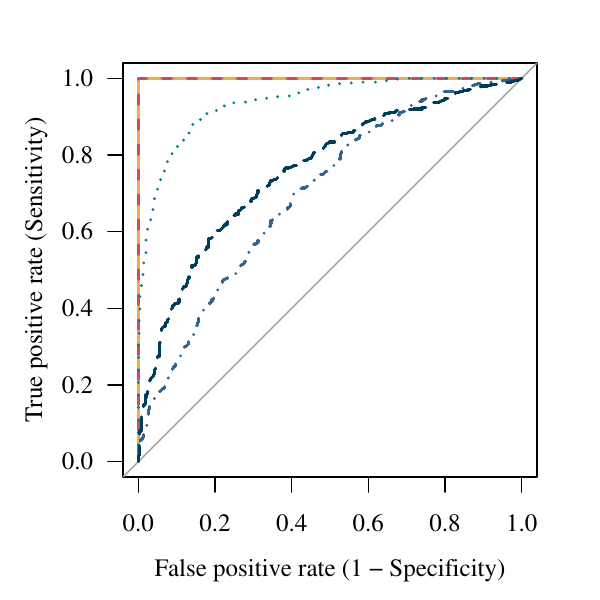}
			\caption{Setting 2, scenario (B')}
			\label{fig:spl_case2_set2}
		\end{subfigure}
		
		\vspace{1em}
		
		\begin{subfigure}{0.45\textwidth}
			\centering
			\includegraphics[width=\linewidth]{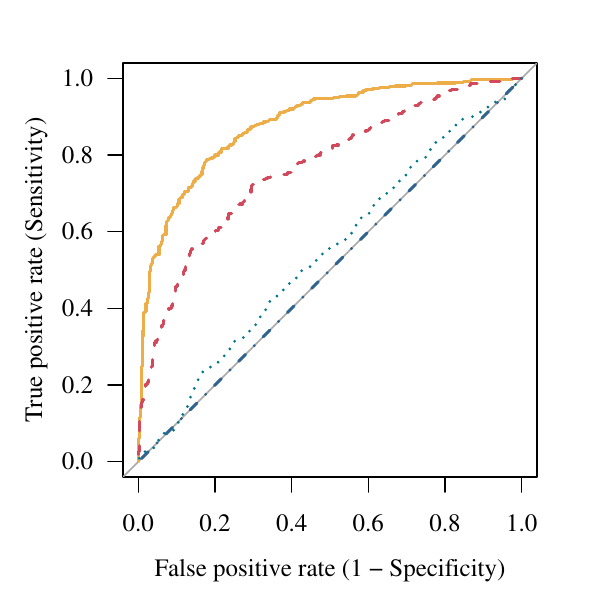}
			\caption{Setting 3, scenario (A)}
			\label{fig:spl_case3_set1}
		\end{subfigure}
		\hfill
		\begin{subfigure}{0.45\textwidth}
			\centering
			\includegraphics[width=\linewidth]{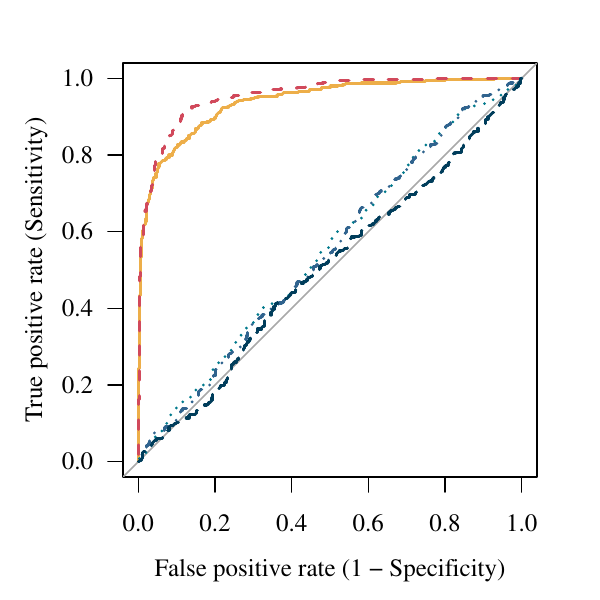}
			\caption{Setting 3, scenario (B')}
			\label{fig:spl_case3_set2}
		\end{subfigure}
		
		\caption{ROC curves of $\psi_{n_1,n_2}$ (solid yellow), the wavelet-based test (dashed red), the test by \citet{preuss_hildebrandt2013} (dotted teal), the test by \citet{caiado2012tests} (dot-dash blue), and the test by \citet{dette_paparoditis2009} (long dash dark blue).}
		\label{fig:powers2}
	\end{figure}
	
	In settings 1 and 2, the ROC curves of both our test and the wavelet-based test overlap, and both achieve perfect classification. Moreover, even when the two spectral densities are difficult to distinguish, as in setting 3, these two tests are already able to address the problem. Our test performs better in setting 3 when the sample sizes are unbalanced, in accordance with the findings in Section \ref{sec:SimulationStudy}.
	
	\bibliographystyle{plainnat}
	\bibliography{Bibliography.bib}
	
\end{document}